\renewcommand{\geq}{\geqslant}
\renewcommand{\leq}{\leqslant}
\newcommand{\om}{\omega}
\newcommand{\la}{\langle}
\newcommand{\ra}{\rangle}
\newcommand{\nin}{\not\in}
\newcommand{\A}{\forall}
\newcommand{\E}{\exists}
\newcommand{\noi}{\noindent}
\newcommand{\subs}{\subseteq}
\newcommand{\ms}{\medskip}
\newcommand{\bs}{\bigskip}
\newcommand{\s}{\sigma}
\newcommand{\N}{\mathbb{N}}
\newcommand{\emp}{\emptyset}
\renewcommand{\b}{\bullet}
\renewcommand{\a}[1]{\ar@{-}[#1]}
\newtheorem{thm}{Theorem}[section]
\newtheorem{lem}[thm]{Lemma}
\newtheorem{cor}[thm]{Corollary}
\theoremstyle{definition}
\newtheorem{defn}[thm]{Definition}
\title{Self-embeddings of computable trees}
\author{Stephen Binns \and Bj\o rn Kjos-Hanssen \and Manuel Lerman 
\and James H. Schmerl \and Reed Solomon\footnote{The authors thank Carl Jockusch and Ted Slaman for helpful discussions regarding the results in the last section of this article. Solomon's research was partially supported by an NSF Grant DMS-0400754.}
}
\begin{document}

\maketitle

\begin{abstract}
We divide the class of infinite computable trees into three types.  For the first and second types, $0'$ computes a nontrivial 
self-embedding while for the third type $0''$ computes a nontrivial self-embedding.  These results are optimal and we obtain partial results concerning the complexity of 
nontrivial self-embeddings of infinite computable trees considered up to isomorphism.  We show that every infinite computable tree 
must have either an infinite computable chain or an infinite $\Pi^0_1$ antichain.  This result is optimal and has connections to the program of reverse mathematics.
\end{abstract}

\section{Introduction}
\label{intro}

In this article, we examine self-embeddings of countable trees from the perspective of computable algebra.  The following definition of a tree 
is more restrictive than some other definitions in the literature but it is more general than the notion of computable tree used 
in the context of $\Pi^0_1$ classes.

\begin{defn}
A \textbf{tree} is a partial order $(T, \preceq)$ with a least element (called the \textbf{root} of $T$ and denoted by $\lambda$) such that 
for all $n \in T$, the set $\{ m \in T \mid m \preceq n \}$ is a finite linearly ordered set.  The elements of $T$ are referred to as \textbf{nodes}. 
If $n \prec m$ and there are no elements strictly between $n$ and $m$ in $T$, we say that $m$ is a \textbf{successor} of $n$.  If $n$ 
has no successor, then it is called a \textbf{leaf} and if $n$ has more than one successor, then it is called a \textbf{branching node}.  
If $(T_0, \preceq_0)$ and $(T_1, \preceq_1)$ are trees, then an \textbf{embedding} from $T_0$ to 
$T_1$ is an injective function $f: T_0 \rightarrow T_1$ such that $n \preceq_0 m$ if and only if $f(n) \preceq_1 f(m)$.   We write 
$f:T_0 \hookrightarrow T_1$ to denote that $f$ is an embedding of $T_0$ into $T_1$.  
\end{defn}

Our concern is with countable trees, so we assume $|T| \leq \omega$ for the rest of this article.  The \textbf{branching 
function} $\text{br}:T \rightarrow \omega \cup \{ \infty \}$ of $T$ is the function which maps $n \in T$ to the number of successors of $n$.  Notice 
that $n$ is a leaf if and only if $\text{br}(n) = 0$.  $T$ is \textbf{finitely branching} if 
the range of the branching function is contained in $\omega$ and $T$ is \textbf{binary branching} if the range of 
the branching function is contained in $\{ 0,1,2 \}$.  

Classically, any countable tree $T$ is isomorphic to a subtree of $\omega^{< \omega}$.   
A tree $(T,\preceq)$ is \textbf{computable} if $T \subseteq \omega$ 
is a computable set and $\preceq$ is a computable relation defined on $T^2$.  (It does not hurt to assume that a computable tree is coded in such 
a way that $T = \omega$.)  For example, if $T$ is a computable nonempty subset of $2^{< \omega}$ or $\omega^{< \omega}$ 
which is closed under initial segments and $\preceq$ is the initial segment relation, then $(T, \preceq)$ is a computable tree.  In these cases, 
the successor relation is computable and in the case when $T \subseteq 2^{< \omega}$, the leaf relation and the branching function 
are computable.  However, in general, it is not the case that the successor relation, the leaf relation or 
the branching function is computable for a computable tree or 
that a computable tree is computably isomorphic to a computable subtree of $\omega^{< \omega}$.  

A large amount of work has been done on $\Pi^0_1$ classes, which are sets of infinite paths through computable subtrees of $2^{< \omega}$ and 
$\omega^{< \omega}$.  (See Cenzer \cite{cen:99} and Cenzer and Remmel \cite{cen:98} for surveys of this work.)  
In addition, work has been done on the possible degrees of isomorphism types of trees by Richter \cite{ric:81} and on 
computable categoricity of trees by R.~Miller \cite{mil:05} and by Lempp, McCoy, R.~Miller and Solomon \cite{lem:05}.  One of the main tools for working with 
trees in \cite{lem:05}, \cite{mil:05} and \cite{ric:81} and in our current work is Kruskal's Lemma.  

\begin{lem}[Kruskal \cite{kru:60}]
Let $\{ T_i \mid i \in \omega \}$ be a countable collection of finite trees.  There exists $k \in \omega$ such that for all $i > k$, there are infinitely many 
$j > i$ for which $T_i$ embeds into $T_j$.  
\end{lem}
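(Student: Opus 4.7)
The plan is to derive this lemma from the classical form of Kruskal's theorem, namely that the finite trees form a well-quasi-order under the embeddability relation $\hookrightarrow$: for every infinite sequence $S_0, S_1, \ldots$ of finite trees, there exist indices $i < j$ with $S_i \hookrightarrow S_j$. This is the content that must be imported from \cite{kru:60}; everything else is a short combinatorial cleanup.

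From this I would invoke the standard Ramsey-theoretic strengthening: any infinite sequence in a well-quasi-order contains an infinite ascending subsequence. Applied to $T_0, T_1, \ldots$, this is obtained by $2$-coloring each pair $\{i,j\}$ with $i<j$ according to whether $T_i \hookrightarrow T_j$ holds. An infinite homogeneous set with color ``no'' would give an infinite bad sequence, contradicting Kruskal's theorem, so Ramsey's theorem delivers indices $i_0 < i_1 < \cdots$ with $T_{i_0} \hookrightarrow T_{i_1} \hookrightarrow \cdots$.

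Now I would call an index $i$ \emph{bad} if $\{\,j > i : T_i \hookrightarrow T_j\,\}$ is finite, and show that only finitely many indices are bad; the lemma then follows by letting $k$ be the largest bad index (or $k=0$ if none exists). Suppose for contradiction that the set of bad indices is infinite, and enumerate it as $n_0 < n_1 < \cdots$. Apply the preceding paragraph to the subsequence $T_{n_0}, T_{n_1}, \ldots$ to obtain $k_0 < k_1 < \cdots$ with $T_{n_{k_0}} \hookrightarrow T_{n_{k_\ell}}$ for every $\ell \geq 1$. But then $T_{n_{k_0}}$ embeds into infinitely many later terms of the original sequence, contradicting the badness of $n_{k_0}$.

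The main obstacle is of course Kruskal's original well-quasi-ordering theorem itself, whose standard proof proceeds via the Nash--Williams minimal-bad-sequence argument; this is deep content that one imports via the citation. Once the WQO property is in hand, the passage to the form stated in the lemma is exactly the brief Ramsey argument above.
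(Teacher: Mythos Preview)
Your derivation is correct: the passage from the well-quasi-order form of Kruskal's theorem to the stated version via Ramsey's theorem and the ``bad index'' argument is sound and standard. Note, however, that the paper does not actually give a proof of this lemma; it is stated with attribution to \cite{kru:60} and used as a black box, so there is no proof in the paper to compare your argument against. Your write-up supplies exactly the kind of reduction one would expect if asked to justify the specific formulation from the classical WQO statement.
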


The main motivation for the present work comes from the effective analysis of the Dushnik--Miller Theorem \cite{dus:40}.  
This theorem states that any countably infinite 
linear order has a nontrivial self-embedding.  (A self-embedding of a linear order is called nontrivial if it moves at least one, and hence infinitely many, points.)  
Downey and Lempp \cite{dow:99} analyzed the classical proof of the Dushnik--Miller Theorem and 
observed that if $L$ is a computable infinite linear order then $0''$ computes a nontrivial self-embedding of $L$.  
It remains an open question whether there is such an order which requires $0''$ to compute a nontrivial self-embedding, but 
they showed that there is a computable linear order $L$ for which any nontrivial self-embedding computes $0'$.  

Downey, Jockusch and J.S.~Miller \cite{dow:ta} carried this analysis further and showed that for computable discrete linear orders,
\footnote{A discrete linear order is a linear order in which every element except the least 
has an immediate predecessor and every element except the greatest has an immediate successor.} 
being PA over $0'$ is enough to compute a nontrivial 
self-embedding and conversely that there is a computable discrete linear order for which every nontrivial self-embedding is PA over $0'$. 
\footnote{A Turing degree is called PA over $0'$ if it can compute an infinite path through any infinite $0'$-computable subtree of $2^{< \omega}$.}   
They also proved that there is an infinite nondiscrete computable linear order for which $0'$ cannot compute a nontrivial self-embedding.

Our goal is to carry out a similar analysis for nontrivial self-embeddings of computable trees.  In the context of trees, it is useful to take a slightly 
more restrictive definition of nontrivial.  We say that a self-embedding $f:T \rightarrow T$ is \textbf{nontrivial} if it is not an onto map. (It follows from the proof 
of the Dushnik--Miller Theorem that every countably infinite linear order has a self-embedding which is nontrivial in this sense as well.)  
We say that a self-embedding $f$ is \textbf{weakly nontrivial} if it is not the identity map.  In the context of trees (as opposed to linear orders), this condition 
does not imply that $f$ must move infinitely many nodes.  However, frequently one can build trees for which any weakly nontrivial self-embedding 
moves infinitely many points.  We will state our results for nontrivial self-embeddings and occasionally point out cases in which they can also be 
applied to weakly nontrivial self-embeddings.    

It is known that every countably infinite tree has a nontrivial self-embedding and such an embedding can be obtained by a simple application 
of Kruskal's Lemma.  (For example, see Ross \cite{ros:89}.  This existence result 
also holds for more general definitions of trees using extensions of Kruskal's Lemma 
such as the main theorem from Corominas \cite{cor:85}.)  In Section \ref{sec:upper}, we analyze such a proof 
and show that $0''$ computes a nontrivial self-embedding for any computable infinite tree $T$.  This proof naturally breaks into three cases 
depending on the structure of $T$ and we use this distinction to define three classes of trees.

If $n$ is a node in $T$, we let $T(n)$ denote the subtree $\{ m \mid n \preceq m \}$ with the inherited partial order.   We say that 
$n$ is an \textbf{infinite node} if $T(n)$ is infinite and we say that $n$ is an $\mathbf{\omega}$\textbf{-node} if $n$ has infinitely many successors.  A 
\textbf{path} in $T$ is an infinite maximal linearly ordered subset of $T$.  We say that a path $P \subseteq T$ is \textbf{isolated} if there is a node 
$n \in P$ such that $P$ is the only path containing $n$.  $T$ is called a \textbf{type 1 tree} if it contains a maximal infinite node.  That is, $T$ contains a 
node $n$ such that $T(n)$ is infinite but for all $m \succ n$, $T(m)$ is finite.  $T$ is called a \textbf{type 2 tree} if it does not have a maximal 
infinite node but does have an isolated path.  $T$ is called a \textbf{type 3 tree} if it is infinite but has no maximal infinite node and no isolated paths.  
(R.~Miller \cite{mil:05} used the same classification of height $\omega$ trees as well as proof techniques similar to several of those used here.)    
We show that if $T$ is an infinite computable tree of type 1 or type 2, then $0'$ can compute a nontrivial self-embedding of $T$ and that if $T$ is 
an infinite computable type 3 tree, then $0''$ can compute a nontrivial self-embedding of $T$.  

There are at least two questions that one might ask concerning the optimality of these results.  First, is there a computable type 1 (or type 2) tree $T$ for 
which every nontrivial self-embedding computes $0'$?  Second, is there is computable type 1 (or type 2) tree $T$ such that for every 
(classically) isomorphic computable tree $S \cong T$ and for every nontrivial self-embedding $f$ of $S$, $f$ computes $0'$?  (There are similar questions 
concerning the connection between $0''$ and nontrivial self-embeddings of type 3 trees.)  When answering the first 
question, one is allowed to use facts about the particular computable coding (or presentation) of $T$, while in the second question, one must work 
with the isomorphism type of $T$ and not with the particular coding.  We examine both questions with respect to each type of trees.  

In Section \ref{sec:type1}, we show that there is a computable type 1 tree for which every nontrivial self-embedding computes $0'$ and 
there is a computable type 1 tree for which no (classically) isomorphic computable tree has a computable nontrivial self-embedding.  In Section 
\ref{sec:type2}, we show the same results for computable type 2 trees.  In Section \ref{sec:type3}, we show that there is a computable type 3 tree for 
which every nontrivial self-embedding computes $0''$ and there is a computable type 3 tree such that no (classically) isomorphic computable tree 
has a $0'$-computable nontrivial self-embedding.  (We actually show something slightly stronger but we leave the technical statement of the result 
until Section \ref{sec:type3}.)  

These results show that the bounds of $0'$ and $0''$ are optimal in the sense of specific computable trees but we only obtain partial results in 
terms of the isomorphism types.  It remains an open question whether there exists a type 1 or 2 computable tree for which $0'$ is necessary to 
compute a nontrivial self-embedding in every isomorphic computable copy and whether there exists a type 3 computable tree for which $0''$ is 
necessary to compute a nontrivial self-embedding in every isomorphic computable copy.  

In Section \ref{sec:cac}, we turn to a slightly different question.  The Chain/Anti-Chain Principle states that every infinite partial order has 
either an infinite chain or an infinite antichain.  (This principle is a simple consequence of Ramsey's Theorem for pairs.)  Herrmann \cite{her:01} investigated this 
result from the perspective of computable combinatorics and proved that every infinite computable partial order has either an infinite $\Delta^0_2$ chain or an 
infinite $\Pi^0_2$ antichain.  Furthermore, he proved that these bounds were optimal by constructing an infinite computable partial order which has 
no infinite $\Sigma^0_2$ chains or antichains.  

We examine how these results can be improved in the context of trees as a special type of partial order.  
We show that every infinite computable tree has either an infinite computable chain or an infinite $\Pi^0_1$ antichain.  
Furthermore, we show that these bounds are optimal by constructing an infinite computable tree which has no infinite $\Sigma^0_1$ chains or antichains.  
Our construction is easily modified to work in the context of models of the subsystem $\text{WKL}_0$ of second order arithmetic.  Thus we show that 
$\text{WKL}_0$ is not strong enough to prove the Chain/Anti-Chain Principle for binary branching trees. 

Our computability theoretic notation is standard and follows Soare \cite{soa:book}.  In particular, we use $\varphi_e$ to denote the $e^{\text{th}}$ 
partial computable function, we use $K$ to denote the halting set (or any other complete computably enumerable set), and we use $X[n]$ to 
denote $\{ m \in X \mid m < n \}$ for any set $X$.  The relation $\preceq$ denotes a tree order, $\leq$ denotes the standard order on 
$\omega$ and $\leq_T$ denotes Turing reducibility.  

\section{Nontrivial self-embeddings}
\label{sec:upper}

In this section we show that $0''$ suffices to compute a nontrivial self-embedding of any infinite computable tree and that for certain special 
cases, $0'$ is sufficient.  For any tree $T$, let $S(m,n)$ denote the successor relation on $T$ (that 
$m$ is a successor of $n$) and let $\text{br}: T \rightarrow \omega \cup \{ \infty \}$ denote the branching function on $T$.   
As mentioned in the introduction, the successor relation and the branching function need not be computable even if $T$ is computable.  

\begin{lem}
\label{universal}
Every computable tree $T$ embeds into $2^{<\om}$ and the embedding is computable in $0'$.
\end{lem}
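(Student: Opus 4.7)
The plan is to define $f:T\to 2^{<\om}$ by a ``comb'' construction: send the root $\lambda$ to the empty string, and if $n\neq\lambda$ has parent $p$ in $T$, set $f(n)=f(p)1^k0$, where $k$ is the number of siblings of $n$ (i.e.\ other children of $p$) whose numerical codes are less than $n$. Verification that this is an embedding is routine. Along any chain $n_0\prec n_1\prec\cdots\prec n_r$, each $f(n_{i+1})$ properly extends $f(n_i)$, so $n\preceq n'$ implies $f(n)$ is an initial segment of $f(n')$. Conversely, if $n$ and $n'$ are $\preceq$-incomparable, take their greatest common ancestor $q$ and let $m,m'$ be the distinct children of $q$ with $m\preceq n$ and $m'\preceq n'$; by construction $f(m)=f(q)1^k0$ and $f(m')=f(q)1^{k'}0$ with $k\neq k'$, so $f(m)$ and $f(m')$ are incomparable in $2^{<\om}$, and $f(n)\sups f(m)$, $f(n')\sups f(m')$ are therefore incomparable as well.

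The content of the lemma is that the recipe can be carried out using a $0'$ oracle, even though the parent (successor) relation on a computable tree need not be computable. This is where $0'$ enters: ``$p$ is the parent of $n$'' is the conjunction of the computable fact $p\prec n$ with the $\Pi^0_1$ statement $\neg\E q(p\prec q\prec n)$, hence $\Delta^0_2$. Given $n$, one enumerates the finite linearly ordered set $\{m\in T:m\prec n\}$ (a computable enumeration) and uses $0'$ to pick out its maximum, yielding the parent; the same tool locates the root as the unique element with no $\prec$-predecessor. Sibling-counting ``how many $m<n$ have parent $p$?'' reduces to finitely many further parent queries and is likewise $0'$-computable.

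The one mild obstacle is that a computable tree $T\subs\om$ need not have its tree order agreeing with the numerical order, so a parent can have a larger code than its child, and we cannot simply compute $f(0),f(1),f(2),\ldots$ in order. I would handle this by recursing on tree-depth instead: given $n$, first use the $0'$-parent function to read off the finite chain $\lambda=n_0\prec n_1\prec\cdots\prec n_r=n$ of ancestors, then compute $f(n_0),f(n_1),\ldots,f(n_r)$ in turn from the root down. Since each step is $0'$-computable and the chain is finite, this produces $f(n)$ uniformly in $n$ using $0'$, completing the proof.
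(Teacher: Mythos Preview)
Your proof is correct and follows essentially the same approach as the paper: use $0'$ to compute the (explicitly $\Pi^0_1$) parent relation, read off the finite ancestor chain $\lambda=n_0\prec n_1\prec\cdots\prec n_k=n$, and map $n$ to a concatenation of blocks of the form $1^{\ell}0$ along this chain. The only difference is cosmetic: the paper takes $\ell=n_i$ (the raw numerical code of the $i$th ancestor), which avoids your sibling-counting step entirely while still guaranteeing that distinct children of the same parent receive incomparable images.
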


\begin{proof} 
Given an arbitrary $n\in T$ we compute the image of $n$ as follows. $0'$ computes the successor relation of $T$ 
(which is explicitly $\Pi_1^0$) so we can use  $0'$ to compute the sequence 
$$\lambda = n_0\prec n_1 \prec \dots \prec n_k=n,$$
where $S(n_{i+1}, n_i)$ for all $i$. The image of $n$ will then be 
$${1^{n_0}}\ast 0\ast {1^{n_1}}\ast 0\ast \dots {1^{n_k}}\ast 0.$$
(where $1^{m}$ denotes the string of $m$ ones and $*$ denotes concatenation). It is straightforward to confirm that this gives an embedding.  
(Recall that an embedding does not need to be closed under initial segments.)
\end{proof}

We defined a path in $T$ to be an infinite maximal linearly ordered subset of $T$.  Often we specify a path by giving an infinite 
sequence of elements $x_0 \prec x_1 \prec x_2 \prec \cdots$ such that $S(x_{i+1}, x_i)$.  (This method of specifying a path corresponds 
to the notion of path used in the study of $\Pi^0_1$ classes in which the successor relation is computable.)  From such a sequence, we can 
compute the path $P = \{ y \mid \exists i (y \preceq x_i) \}$ by calculating the elements $x_i$ in order until we find an $x_i$ such that 
either $y \preceq  x_i$ or $y$ is incomparable with $x_i$.  This procedure cannot necessarily be reversed; in general, we cannot 
compute such a sequence from a given path $P$ because the successor relation need not be computable from $P$.  However, we can 
compute a sequence of elements $y_0 \prec y_1 \prec \cdots$ which is cofinal in $P$ without requiring that $S(y_{i+1},y_i)$ holds.  
Furthermore, from such a sequence we can also compute the associated path.  

\begin{lem}
\label{maxomega}
If $T$ is an infinite tree and it has no maximal infinite node then it has a path.  
\end{lem}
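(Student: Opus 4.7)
The plan is to build a path inductively by always moving to a successor whose subtree is still infinite, exploiting the hypothesis that no infinite node is maximal among infinite nodes.

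First I would unpack the hypothesis. A maximal infinite node is one $n$ with $T(n)$ infinite but $T(m)$ finite for every $m\succ n$. So the assumption that $T$ has no maximal infinite node means: for every $n\in T$ with $T(n)$ infinite, there exists some $m\succ n$ with $T(m)$ also infinite. From this I want to strengthen to: for every such $n$, some immediate successor $n'$ of $n$ satisfies $T(n')$ infinite. This is the only step where any real thought is required, and it follows from the tree structure: if $m\succ n$ and $n'$ is the unique successor of $n$ lying on the unique finite chain from $n$ to $m$, then $T(n')\supseteq T(m)$, so $T(n')$ is infinite. (Equivalently, if every successor of $n$ had a finite subtree, then $T(n)=\{n\}\cup\bigcup_{n'}T(n')$ would have to be infinite only by there being infinitely many successors, but then $n$ itself would be maximal infinite, contradicting the hypothesis.)

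Next I would note that the root $\lambda$ is an infinite node, since $T(\lambda)=T$ and $T$ is assumed infinite. I then define recursively $x_0=\lambda$ and, given $x_i$ an infinite node, choose $x_{i+1}$ to be some successor of $x_i$ with $T(x_{i+1})$ infinite, as provided by the previous paragraph. This yields an infinite chain $x_0\prec x_1\prec x_2\prec\cdots$ in which each $x_{i+1}$ is an immediate successor of $x_i$.

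Finally I would verify that $P=\{y\in T\mid \exists i\;(y\preceq x_i)\}$ is a path, i.e.\ an infinite maximal linearly ordered subset. That $P$ is linearly ordered is immediate: any two elements $y,z\in P$ satisfy $y,z\preceq x_j$ for some common $j$, and the predecessors of $x_j$ form a finite linearly ordered set by definition of tree. That $P$ is infinite is clear since $x_i\in P$ for all $i$. For maximality, suppose $z\in T$ is comparable with every element of $P$. Then in particular $z$ is comparable with each $x_i$, so either $z\preceq x_i$ for some $i$ (whence $z\in P$) or $x_i\preceq z$ for all $i$; the latter is impossible because then $\{m\mid m\preceq z\}$ would be an infinite linearly ordered set, contradicting the definition of tree. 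Hence $P$ is a path.

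I do not anticipate a genuine obstacle: the only non-routine observation is that in the absence of a maximal infinite node the "infinite successor" can be taken to be an immediate successor, and this follows by a one-line appeal to the finiteness of predecessor sets in a tree.
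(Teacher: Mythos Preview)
Your proof is correct, and it takes a genuinely different route from the paper's. The paper argues by cases on whether $T$ (or some subtree encountered along the way) contains an $\omega$-node: if not, it invokes K\"onig's Lemma to get a path; if so, it finds a successor of the $\omega$-node with infinite subtree and iterates, producing either a path via K\"onig's Lemma at some stage or an infinite ascending sequence of $\omega$-nodes. Your argument is more uniform and more elementary: you observe directly that any infinite node $n$ has an \emph{immediate} successor $n'$ with $T(n')$ infinite (since some $m\succ n$ has $T(m)$ infinite, and the successor of $n$ below $m$ inherits this), and then build the path step by step starting from the root. This avoids K\"onig's Lemma altogether and handles the finitely-branching and $\omega$-branching cases in one stroke. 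The paper's version has the minor advantage of making the connection to K\"onig's Lemma explicit, which is thematically relevant elsewhere in the article, but your approach is the cleaner proof of the bare statement.
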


\begin{proof}
Suppose $T$ is an  infinite tree and has no maximal infinite node. If $T$ has no $\omega$-node then it is finitely branching and has a 
path by K\"onig's Lemma. Otherwise, $T$ must have an $\omega$-node $n_0$ and a successor $n_1$ of $n_0$ such that $T(n_1)$ 
is infinite (otherwise $n_0$ would be a maximal infinite node). If $T(n_1)$ has no $\omega$-nodes, then $T(n_1)$ has a path (and so does $T$) 
by K\"{o}nig's Lemma.  Otherwise, 
$T(n_1)$ must have an $\omega$-node $n_2$ which has a successor $n_3$ such that $T(n_3)$ is infinite. Iterating in this way, we 
either arrive at an infinite finitely branching tree $T(n_i)$ which has a path, or we obtain an infinite sequence 
$n_0\prec n_1\prec n_2 \prec \dots$ which defines a path.  
\end{proof}

\begin{thm}
\label{thm:ntse}
Every infinite computable tree has a nontrivial self-embedding computable in $0''$.
\end{thm}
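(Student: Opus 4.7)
The plan is to analyze $T$ according to the tripartite classification introduced above, since the structural distinctions among the three types correspond to different constructions of the self-embedding. The common ingredient is that ``$T(m)$ is infinite'' is a $\Pi^0_2$ predicate in any computable tree, hence decidable by $0''$; consequently the subtree $I = \{m \in T : T(m) \text{ is infinite}\}$ of infinite nodes is $0''$-computable, and for each $m \in T \setminus I$ the underlying finite node set of $T(m)$ is uniformly $0''$-computable.

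If $T$ is type~1, I would use $0''$ to locate a maximal infinite node $n$, i.e., a leaf of $I$. Since $T(n)$ is infinite while each $T(m)$ at an immediate successor $m$ of $n$ is finite, $n$ must have infinitely many immediate successors $m_0, m_1, \ldots$, whose finite subtrees are all uniformly computable from $0''$. Applying Kruskal's Lemma to $\{T(m_i) : i<\om\}$ yields a threshold $k$, and then I would inductively build a strictly increasing map $\sigma : \{k+1,k+2,\ldots\} \to \{k+1,k+2,\ldots\}$ with $\sigma(i)>i$ together with embeddings $\psi_i : T(m_i) \hookrightarrow T(m_{\sigma(i)})$; this keeps $\sigma$ injective and leaves $k+1$ out of its range. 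The self-embedding $f$ is the identity on $T \setminus \bigcup_{i>k} T(m_i)$ and $\psi_i$ on each $T(m_i)$ with $i>k$; routine verifications show $f$ is a nontrivial self-embedding whose image omits $m_{k+1}$.

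If $T$ has no maximal infinite node (type~2 or 3), Lemma \ref{maxomega} produces a path, and $0''$ can construct one by setting $y_0 = \l$ and taking $y_{i+1}$ to be the least immediate successor of $y_i$ in $T$ with $T(y_{i+1})$ infinite. For type~2 the isolated-path hypothesis forces all off-path subtrees above the isolation point to be finite, since any infinite off-path subtree would carry its own path and destroy isolation. Kruskal's Lemma then applies to this family of finite subtrees, and combining a Kruskal-style rearrangement of them with a shift along $P$ yields a nontrivial self-embedding by the same template as in the type~1 case.

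The hard part will be type~3. Since no path is isolated, along any $P$ the off-path subtrees may themselves be infinite, placing them outside the scope of Kruskal's Lemma as stated. My plan here is to exploit the fact that in a type~3 tree every infinite node has branching above it in $I$, using $0''$ to locate an infinite sequence of branching points of $I$ along or above $P$, extracting from them a $2^{<\om}$-style perfect subtree of infinite-node extensions, and then shifting along this self-similar subtree so that at least one initial branch is omitted from the image of $f$.
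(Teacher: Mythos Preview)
Your type~1 and type~2 arguments are essentially the paper's, though the paper carries them out with only $0'$ rather than $0''$: the successor relation is $\Pi^0_1$, and in both cases one can determine which successor subtree is infinite (or that a finite subtree has finished growing) by a single $\Sigma^0_1$/$\Pi^0_1$ question, so the Kruskal-based construction is $0'$-computable. Your versions are correct for the stated theorem but miss this sharper bound (Lemmas~\ref{lem:ntse1} and~\ref{lem:ntse2}).

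For type~3 there is a genuine gap. You correctly observe that $0''$ can build a copy of $2^{<\omega}$ inside $T$ by repeatedly splitting into two incomparable infinite-node extensions. But ``shifting along this self-similar subtree'' does not yield a self-embedding of $T$: a shift of the perfect subtree only tells you where the perfect-subtree nodes go, and says nothing about where to send the (possibly infinite) portions of $T$ hanging off it. There is no reason those side trees embed into the side trees further along, so the shift does not extend to all of $T$.

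The paper's device here is to \emph{compose} rather than shift. One already has, from Lemma~\ref{universal}, a $0'$-computable embedding $\beta : T \hookrightarrow 2^{<\omega}$ valid for any computable tree. Your perfect-subtree construction gives a $0''$-computable embedding $\varphi : 2^{<\omega} \hookrightarrow T$ whose image avoids the root. The composition $\varphi \circ \beta : T \to T$ is then a $0''$-computable self-embedding, and it is nontrivial because $\varphi$ (and also $\beta$) is not onto. This sidesteps entirely the problem of matching up the off-perfect-subtree pieces.
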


The proof splits (nonuniformly) into three cases depending on whether the infinite computable tree is of type 1, 2 or 3.  By Lemma \ref{maxomega}, if 
$T$ is infinite but not of type 1, then $T$ must have a path, so either $T$ has an isolated path (and is a type 2 tree) or $T$ has no 
isolated path (and is a type 3 tree).  Therefore, every infinite tree is either type 1, 2 or 3.  The next three lemmas cover these 
cases and show that for computable trees of type 1 or 2, $0'$ computes a nontrivial self-embedding.  

\begin{lem}
\label{lem:ntse1}
Every computable type 1 tree has a nontrivial self-embedding computable in $0'$.
\end{lem}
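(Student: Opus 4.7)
The plan is to exploit the maximal infinite node of $T$ together with Kruskal's Lemma applied to its sibling subtrees.

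First, use $0'$ to locate a maximal infinite node $n \in T$; by the type 1 hypothesis, such an $n$ exists. Observe that $n$ is necessarily an $\omega$-node: if $n$ had only finitely many successors $m_0, \ldots, m_r$, then $T(n) = \{n\} \cup \bigcup_{i \leq r} T(m_i)$ would be a finite union of finite sets (each $T(m_i)$ being finite by maximality of $n$), contradicting that $T(n)$ is infinite. Enumerate the successors $m_0, m_1, m_2, \ldots$ of $n$; the enumeration is $0'$-computable because the successor relation is $\Pi^0_1$.

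Next, apply Kruskal's Lemma to the finite trees $\{T(m_i) : i \in \omega\}$ to obtain some $k$ such that for every $i > k$, infinitely many $j > i$ admit embeddings $T(m_i) \hookrightarrow T(m_j)$. Using $0'$, construct an injection $\pi$ on $\{i : i > k\}$ with $\pi(i) > i$ whose image omits some fixed $i_0 > k$, together with explicit embeddings $\psi_i : T(m_i) \hookrightarrow T(m_{\pi(i)})$ for each $i > k$. Since each $T(m_i)$ is finite, once $0'$ has pinned down its extent the search for $\psi_i$ is a finite computation.

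Define $f : T \to T$ by $f(z) = \psi_i(z)$ if $z \in T(m_i)$ with $i > k$, and $f(z) = z$ otherwise. The routine verifications: $f$ is injective because the target subtrees $T(m_{\pi(i)})$ are pairwise disjoint sibling subtrees (using injectivity of $\pi$); $f$ is order-preserving because the $\psi_i$ embed within each $T(m_i)$ and every comparison crossing into or out of $\bigcup_{i > k} T(m_i)$ factors through the fixed common parent $n$; and $f$ is nontrivial because $m_{i_0}$ lies outside the image.

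The main obstacle is the very first step: the direct definition of ``maximal infinite node'' is $\Pi^0_3$, while $0'$ decides only $\Delta^0_2$ predicates. I expect the resolution to build $n$, $\pi$, and the $\psi_i$ together via a $0'$-computable search driven by the guaranteed existence of a type 1 witness rather than by a direct decision procedure: any provisional candidate $n$ that later reveals an infinite proper descendant is discarded, and the Kruskal-type conclusion ensures that the process eventually stabilizes on a usable $n$ together with the requisite $\pi$ and $\psi_i$.
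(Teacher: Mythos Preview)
Your core argument---observing that the maximal infinite node $n$ must be an $\omega$-node, enumerating its successors via the $\Pi^0_1$ successor relation, applying Kruskal's Lemma to the finite sibling subtrees, and then greedily assigning each $T(m_i)$ (for $i$ past the Kruskal bound $k$) to a later, unused $T(m_{\pi(i)})$ once $0'$ has certified that $T(m_i)$ has stopped growing---is exactly the paper's proof. The paper organizes the greedy step as an induction on $j \geq k$ using the approximations $T_s$, but the content is the same.

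The ``main obstacle'' you raise, however, is not an obstacle at all, and your proposed resolution is unnecessary. The lemma asserts only that \emph{some} nontrivial self-embedding of $T$ is computable in $0'$; it does not assert any uniformity in $T$. Accordingly, the paper does not attempt to locate $n$ using $0'$: it simply fixes $n$ (and the Kruskal bound $k$) as finite non-effective parameters, given to us by the hypothesis that $T$ is of type~1. A function computable from $0'$ together with finitely many hard-coded constants is still computable from $0'$. So your first sentence should read ``Fix a maximal infinite node $n$'' rather than ``Use $0'$ to locate a maximal infinite node $n$,'' and the final paragraph of your proposal can be deleted entirely.
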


\begin{proof}
Let $T$ be a computable type 1 tree with maximal infinite node $n$.  
Because $T(n)$ is infinite but $T(m)$ is finite for all $m \succ n$, $n$ must be an 
$\omega$-node.   Let $n_0<n_1<\dots $ be the successors of $n$. (These are computable from $0'$.)  By Kruskal's Lemma, there is 
a $k$ such that  
\[
\A m\geq k \E^\infty s\geq m ( T(n_m) \hookrightarrow T(n_s)).
\]
Fix such a $k$. We define a $0'$-computable embedding $\varphi$ such that for all $m$, $\varphi(m)\neq m$ if and  only if $m\succeq n_l$ for some $l\geq k$.  
Furthermore, for all $l\geq k$, $\varphi(n_l)=n_j$ for some $j> l$.  

To define $\varphi$, we set $\varphi(m) = m$ for all $m \in T$ such that $n \not \preceq m$ or $m=n$ or $n_i \preceq m$ for some $i < k$.  
We define $\varphi$ on the subtrees $T(n_j)$ for $k \leq j$ by induction on $j$.  
Fix $j\geq k$ and suppose that $\varphi$ has been defined on $T(n_i)$ for all $i< j$.  We let $T_s$ denote the subtree formed by restricting $\preceq$ to 
$\{ 0, 1, \ldots, s \}$.  Use $0'$ to find an $s$ and $t$ such that 
\begin{enumerate}
\item $n_t > \max\{\varphi(n_i): i < j\} + 1$ (the max is taken with respect to $\leq$), 
\item $T_s(n_j)$ embeds into $T_s(n_t)$,
\item $ \A s'>s\ T_{s'}(n_j)=T_s(n_j)$.
\end{enumerate}

Such an $s$ and $t$ exist by our choice of $k$ and because $n$ is a maximal infinite node and hence $T(n_i)$ is finite for each $i$. We extend 
$\varphi$ to include the embedding of $T(n_j) = T_s(n_j)$ into $T_s(n_t)$.  Because $\varphi(n_i) = n_i$ for $i < k$, we have that for all $j \geq k$, if 
$\varphi(n_j) = n_t$, then $t > k$.  Therefore $n_k$ is not in the range of $\varphi$, so $\varphi$ is nontrivial.  
\end{proof}

If $n$ is a successor of the root of a tree $T$, then we say $T(n)$ is a \textbf{successor tree} of $T$.  Similarly, if $n$ is a successor of $m$ in 
$T$, then we say that $T(n)$ is a \textbf{successor tree of} $T(m)$.  

\begin{lem}
\label{lem:ntse2}
Every computable type 2 tree has a nontrivial self-embedding computable in $0'$.
\end{lem}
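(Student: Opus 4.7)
The plan is to mimic the Dushnik--Miller shift on linear orders, using the isolated path as a spine along which to shift and invoking Kruskal's Lemma on the finite side subtrees hanging off the spine.

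First I would fix a node $n \in P$ witnessing isolation of a path $P$ of $T$. Since $T$ has no maximal infinite node, Lemma \ref{maxomega} applied inside any $T(m)$ for $m \succeq n$ with $T(m)$ infinite produces a path through $m$, which by isolation must coincide with $P$. Consequently, the path-elements of $P$ above $n$ are exactly the $m \succeq n$ with $T(m)$ infinite, and every off-path $m \succeq n$ has $T(m)$ finite. I would enumerate $P$ above $n$ as $p_0 = n \prec p_1 \prec p_2 \prec \cdots$, and for each $k$ let $M_k$ denote the multiset $\{T(q) : q \text{ a successor of } p_k,\ q \neq p_{k+1}\}$ of finite side subtrees at level $k$.

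Next I would apply Kruskal's Lemma, extended via Higman's Lemma to multisets of finite rooted trees in order to accommodate possible $\omega$-nodes along $P$, to the sequence $(M_k)_{k \in \omega}$. This yields an index $K$ such that for every $k \geq K$ the multiset $M_k$ embeds root-preservingly into $M_j$ for cofinitely many $j > k$. Using $0'$ I would then greedily build a strictly increasing function $g \colon \{K, K+1, \ldots\} \to \omega$ with $g(K) > K$ together with root-preserving multiset embeddings $\sigma_k \colon M_k \hookrightarrow M_{g(k)}$.

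Finally I would define $\varphi \colon T \to T$ to be the identity on $T \setminus T(p_K)$, to send $p_k$ to $p_{g(k)}$ for each $k \geq K$, and to act via $\sigma_k$ on each side subtree at $p_k$ for $k \geq K$. Routine checking gives that $\varphi$ is an order-preserving injection of $T$ into $T$ and, for instance, $p_K$ is not in the range, so $\varphi$ is nontrivial.

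The main obstacle is the $0'$-computability. The naive description $\{m \succeq n : T(m) \text{ infinite}\}$ of $P$ above $n$ is $\Pi^0_2$, so to enumerate $(p_k)$ with oracle $0'$ I would exploit two pieces of structure: (i) $0'$ recognizes finite subtrees at the stage at which they stabilize, since $T_s(q) = T(q)$ is a $\Pi^0_1$ condition; and (ii) at each $p_k$ exactly one successor has infinite subtree while all others are finite. This allows a $\Delta^0_2$ identification of $p_{k+1}$ by progressive elimination, computation of the stabilized $M_k$, and hence a $0'$-search for the witnesses $g(k)$ and $\sigma_k$. A secondary technical point is justifying the multiset form of Kruskal's Lemma when some $p_k$ is an $\omega$-node, which follows from Higman's Lemma applied to the well-quasi-ordering of finite rooted trees under root-preserving embedding.
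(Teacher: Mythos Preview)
Your spine-and-shift strategy is exactly the paper's approach in the case where there are no $\omega$-nodes above the isolation witness, and in that case your ``progressive elimination'' does yield a $0'$-computation of the path (once $p_k$ is known to have only finitely many successors, $0'$ can list them all and then find the unique one whose subtree never stabilizes). The difference between applying Kruskal to the single finite trees $T(p_k)\setminus T(p_{k+1})$ versus to the finite multisets $M_k$ is cosmetic.

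The gap is in your handling of $\omega$-nodes on the path. First, the progressive-elimination step does not give a $0'$-procedure when $p_k$ has infinitely many successors: you can $0'$-enumerate the successors whose subtrees have stabilized, but with infinitely many of them you never reach a stage at which ``all but one'' are eliminated, and the predicate ``$T(q)$ is infinite'' is genuinely $\Pi^0_2$. Second, Higman's Lemma concerns \emph{finite} sequences (or multisets) over a wqo; it says nothing about embedding an infinite multiset $M_k$ into later terms. Third, and decisively, the embedding you need can fail to exist classically: if $p_K$ is the last $\omega$-node on $P$ (take for instance $p_0$ with successors $p_1,q_1,q_2,\ldots$, each $T(q_i)$ a single node, and each $p_k$ for $k\geq 1$ with unique successor $p_{k+1}$), then $M_K$ is infinite while every later $M_j$ is finite, so no $g(K)>K$ with $M_K\hookrightarrow M_{g(K)}$ exists. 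The paper sidesteps all of this by a case split: if some $m\succeq i$ on the path is an $\omega$-node, then all but one of its successor trees are finite, and one applies the type~1 argument (Lemma~\ref{lem:ntse1}) directly to that tail of successors, never needing to compute the path beyond $m$; only when no such $\omega$-node exists does one compute the full path and invoke Kruskal along it.
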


\begin{proof}
Let $T$ be a computable type 2 tree.  By definition, $T$ has no maximal infinite node and has an isolated path $X$.  
Fix $i\in T$ such that there is only one infinite path extending $i$. By Lemma \ref{maxomega}, if $j\succ i$ is any node on $X$ there is exactly one 
successor $j'$ of $j$ such that $T(j')$ is infinite, namely the successor that is on $X$. Therefore the only possible $\omega$-nodes extending $i$ lie on 
$X$.  If $m\succ i$ is an $\omega$-node, all but one of its successor trees are finite and we are essentially in the case of a type 1 tree. (Let $n_0 < n_1 < \cdots$ 
be the successor nodes of $m$ and let $l$ be such that $T(n_l)$ is the only infinite successor tree.  Apply the argument in the proof of Lemma 
\ref{lem:ntse1} to the sequence of successor 
nodes $n_{l+1} < n_{l+2} < \cdots$.)  Therefore, we assume there are no $\omega$-nodes above $i$. 

We compute $X$ from $0'$ by computing the sequence $i = x_0 \prec x_1 \prec x_2 \prec \cdots$ such that 
$S(x_{i+1},x_i)$ and each $x_i \in X$.  Suppose we have calculated $x_j$.  We use $0'$ to find a stage $s$ such that 
\[  
\A n\succ x_j \E m\in T_s(x_j) (  m\neq x_j \text{ and } \ n\succeq m).
\]
Such an $s$ much exist as $T(x_j)$ is finitely branching by our assumption.  The finite number of nodes which appear to be the successors of 
$x_j$ at stage $s$ are the actual successors of $x_j$.  Again using $0'$, search for a successor $x'$ of $x_j$ and a $t$ such that 
\[
\A v\geq t \ [v\succ x_j \implies v\succeq x'].
\]
Such $x'$ and $t$ must exist as $x_j$ has exactly one path through it.  Since $x'$ is the successor of $x_j$ on $X$, we set 
$x_{j+1} = x'$.

We have defined (from $0'$) the sequence $i=x_0\prec x_1 \prec x_2 \prec \dots$ such that for all $i$, $x_i$ is on the path $X$ and $S(x_{i+1}, x_i)$.  
We now can apply Kruskal's theorem to the sequence of finite trees  $T(x_i)\smallsetminus T(x_{i+1})$ and use an argument similar to the proof of 
Lemma \ref{lem:ntse1}.
\end{proof}

\begin{lem}
\label{lem:ntse3}
Every computable type 3 tree has a nontrivial self-embedding computable in $0''$.
\end{lem}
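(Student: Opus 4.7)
The plan is to use $0''$ to extract the structure of $T$ and build a nontrivial self-embedding in the spirit of Lemmas \ref{lem:ntse1} and \ref{lem:ntse2}. Since $T$ has no maximal infinite node, the predicate ``$T(n)$ is infinite'' is $\Pi^0_2$, hence computable from $0''$; since every infinite node must in fact have an infinite immediate successor (otherwise it would itself be maximal infinite), $0''$ can compute an infinite path $P = \{x_0 \prec x_1 \prec \cdots\}$ in $T$ by iteratively choosing a least infinite immediate successor. Because $T$ has no isolated path, $P$ is not isolated, so, using $0''$, we thin $P$ to a subsequence $y_0 \prec y_1 \prec \cdots$ of branching points on $P$ together with a $0''$-computable choice of off-path infinite immediate successor $y_i^*$ of each $y_i$.

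To construct the embedding $\varphi$, we adapt the argument of Lemma \ref{lem:ntse2}, treating $\{y_i\}$ as a pseudo-isolated path. The new obstacle is that the side-trees $T(y_i^*)$ hanging off the spine are infinite, so the finite pieces to which Kruskal's lemma was applied in Lemma \ref{lem:ntse2} are no longer finite. The key observation is that each $T(y_i^*)$ is itself a type~3 tree (the properties of having no maximal infinite node and no isolated path both descend to subtrees rooted at an infinite node), so $T(y_i^*)$ contains a $0''$-computable perfect subtree; combined with Lemma \ref{universal}, this yields a $0''$-computable embedding $T(y_i^*) \hookrightarrow T(y_j^*)$ for every $i,j$. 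Stitching together such side-branch embeddings with an application of Kruskal's lemma to the finite ``bridge'' portions between consecutive spine nodes yields, for some $i < j$, an embedding $T(y_i) \hookrightarrow T(y_j)$ sending $y_i$ to $y_j$; extending by the identity on nodes not above $y_i$ then gives a nontrivial self-embedding of $T$, since $y_i$ itself lies outside the image.

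The principal difficulty is the management of the infinite side-trees $T(y_i^*)$: Kruskal's lemma as stated applies only to finite trees, so a separate mechanism is required to embed one side-tree into another. The use of perfect subtrees present in every type~3 tree, combined with the universal embedding of Lemma \ref{universal}, provides this mechanism and is the essential new ingredient beyond the proofs for type~1 and type~2 trees.
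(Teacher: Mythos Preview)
You have the right key ingredient buried in your argument---that a type~3 tree contains a $0''$-computable copy of $2^{<\omega}$, and that combined with Lemma~\ref{universal} this lets you embed any computable tree into any type~3 subtree---but you apply it only to map one side tree $T(y_i^*)$ into another, and then try to patch these embeddings together along a spine using Kruskal on ``finite bridge portions''. That patching step has genuine gaps. First, the bridge between consecutive spine nodes, namely $T(y_i)\smallsetminus\bigl(T(y_{i+1})\cup T(y_i^*)\bigr)$, need not be finite: $y_i$ may be an $\omega$-node with infinitely many finite successor trees, there may be intermediate spine nodes with the same feature, and $y_i$ may have several infinite off-path successors, not just the one $y_i^*$ you selected. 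Second, even if all bridges happened to be finite, a single application of Kruskal yields only one pair $i<j$ with the $i$-th bridge embedding into the $j$-th; to embed all of $T(y_i)$ into $T(y_j)$ you would need compatible embeddings of \emph{every} bridge above level $i$ into the corresponding bridge above level $j$, and nothing in your outline supplies that.

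The paper's proof simply notices that your own mechanism already does the whole job in one step, with no spine, no bridges, and no Kruskal: since $T$ itself is type~3, use $0''$ to build an embedding $\varphi:2^{<\omega}\hookrightarrow T$ (the ``perfect subtree'' you mention), use Lemma~\ref{universal} to get a $0'$-computable embedding $\beta:T\hookrightarrow 2^{<\omega}$, and take $\alpha=\varphi\circ\beta$. Choosing $\varphi(\emptyset)$ to be any non-root infinite node makes $\alpha$ non-onto and hence nontrivial.
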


\begin{proof}
Let $T$ be an computable type 3 tree.  By definition, $T$ is infinite but has no maximal infinite node and no isolated path.  
We use $0''$ to define an embedding $\varphi: 2^{<\om} \hookrightarrow T$ by recursion (described below) and then use $0'$ to define an 
embedding $\beta: T \hookrightarrow 2^{<\om}$ as in 
Lemma \ref{universal}.  The composition $\alpha = \varphi \circ \beta$ is the desired nontrivial self-embedding $\alpha: T \hookrightarrow T$.  
(There are numerous ways to see that the self-embedding $\alpha$ is nontrivial.   The empty sequence $\emptyset$, which is the root of $2^{< \omega}$, is not mapped 
to the root of $T$ by $\varphi$ and hence $\varphi$ is not onto.  Also, the map $\beta$ from Lemma \ref{universal} is not onto.  Either of these facts 
is enough to conclude that $\alpha$ is nontrivial.)  

We define the embedding $\varphi:2^{<\om}\rightarrow T$ by recursion using $0''$.  
Let $m$ be any node other than the root of $T$ for which $T(m)$ is infinite. By our case assumption, $T(m)$ has no maximal infinite nodes 
and no isolated paths.  Let $\varphi(\emp)=m$.   Assume that $\varphi(\tau)$ has been defined for all $\tau$ such that $|\tau| \leq k$, that $T(\varphi(\tau))$ 
is infinite and that $\varphi$ gives an embedding of $2^{\leq k}$ into $T$.  Consider each $\sigma$ with $|\sigma|=k$ separately and we show how 
to define $\varphi(\sigma*0)$ and $\varphi(\sigma*1)$.  Assume $\varphi(\sigma) = n$.  
Using $0''$ find two incomparable extensions of $n$, say $n_0$ and $n_1$, such that  $T(n_0)$ and  $T(n_1)$ are both infinite.  Because 
$T(n)$ is infinite and $T$ has no isolated 
paths or maximal infinite nodes, such nodes $n_0$ and $n_1$ must exist. Set $\varphi(\s\ast 0)=n_0$ and $\varphi(\s\ast 1)=n_1$. It is easy to check that the 
inductive assumptions hold at level $k+1$.  
\end{proof}

Theorem \ref{thm:ntse} gives an analysis of the existence of nontrivial self-embeddings in terms of the jump hierarchy.  We could also ask for an 
analysis in terms of other computable relations on $T$.  That is, are there natural algebraic relations on $T$ such that we can compute a nontrivial 
self-embedding from these relations?  For computable trees of type 1 or 2, there is a nontrivial self-embedding computable from 
the join of the successor relation and the branching function.  

\begin{cor}
\label{cor:succ1}
If $T$ is a computable type 1 tree, then $T$ has a nontrivial self-embedding computable from the join of the successor relation and the 
branching function.
\end{cor}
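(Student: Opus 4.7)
The plan is to revisit the proof of Lemma \ref{lem:ntse1} and show that each step in which $0'$ is invoked can be carried out using only $S \oplus \text{br}$, provided we allow the maximal infinite node $n$ of $T$ and the Kruskal constant $k$ to be hard-coded as constants in the algorithm for $\varphi$.

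First I would observe that the successors $n_0 < n_1 < \dots$ of $n$ are computable from $S$ alone, by enumerating $m \in \omega$ in order and keeping those with $S(m,n)$. Next, since $n$ is a maximal infinite node, each $T(n_j)$ is finite, and from $S \oplus \text{br}$ I can produce the full finite tree $T(n_j)$ by a depth-first search in which $\text{br}$ tells me when a node's list of successors has been exhausted and $S$ identifies those successors; this process terminates precisely because $T(n_j)$ is finite. Having the complete trees $T(n_j)$ and $T(n_t)$ in hand turns condition~(2) of the original proof into a finitary embedding check, while condition~(3), which asked that $T_{s'}(n_j)$ stabilize, is automatic.

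With these pieces in place the construction of $\varphi$ from Lemma \ref{lem:ntse1} goes through verbatim: for each $j \geq k$ I would search for some $t$ satisfying the analogue of condition~(1) together with $T(n_j) \hookrightarrow T(n_t)$, whose existence is guaranteed by Kruskal's Lemma, and extend $\varphi$ accordingly. The resulting $\varphi$ omits $n_k$ from its range and is therefore nontrivial. The one subtlety, rather than a genuine obstacle, is that neither $n$ nor $k$ is uniformly computable from $S \oplus \text{br}$; but we need only exhibit a single $\varphi \leq_T S \oplus \text{br}$ for the fixed tree $T$, so both may simply be fixed as constants in the oracle machine that computes $\varphi$.
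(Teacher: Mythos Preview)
Your proposal is correct and follows essentially the same approach as the paper: both revisit the proof of Lemma~\ref{lem:ntse1} and observe that its two uses of $0'$---enumerating the successors of $n$ and detecting when the finite tree $T(n_j)$ has stabilized---can be replaced by computations from $S\oplus\text{br}$. Your version is slightly more explicit (the depth-first search argument and the remark about hard-coding $n$ and $k$ as nonuniform parameters), but the underlying idea is identical.
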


\begin{proof}
We used $0'$ twice in the proof of Lemma \ref{lem:ntse1}.  First, we used $0'$ to determine the successors of $n$.  Clearly, 
we can determine these successors from the successor relation.  Second, we used $0'$ to determine if $\forall s' > s (T_{s'}(n_j) = T_s(n_j))$.  
That is, we used it to find a stage by which the finite tree $T(n_j)$ had stopped growing.  Since the trees $T(n_j)$ are finite, we can also determine 
such a stage from the successor relation together with the branching function.   
\end{proof}

\begin{cor}
\label{cor:succ2}
If $T$ is a computable type 2 tree, then $T$ has a nontrivial self-embedding computable from the join of the successor relation and the 
branching function.
\end{cor}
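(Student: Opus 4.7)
The plan is to trace through the proof of Lemma \ref{lem:ntse2} and show that every appeal to $0'$ there can be replaced by the join $S \oplus \text{br}$ of the successor relation and the branching function. As in Corollary \ref{cor:succ1}, two of these uses are elementary: given a node $n$, we enumerate its immediate successors directly from $S$, and we recognize when all of them have appeared by comparing the count against $\text{br}(n)$. In particular, this immediately yields from $S \oplus \text{br}$ the first stage $s$ at which every branch of $x_j$ has a representative in $T_s(x_j)$, which is the second use of $0'$ in the proof.

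The substantive use of $0'$ is in extracting the isolated path $X$: given $x_j$ on $X$ (where, as in the original proof, we may assume no $\omega$-nodes extend $i$, the $\omega$-node case being absorbed into the type~1 argument), we must identify the unique immediate successor $x' = x_{j+1}$ of $x_j$ with $T(x')$ infinite, together with a stage $t$ after which no nodes $\succ x_j$ appear outside $T(x')$. The key observation is that finiteness of a subtree $T(n)$ is a $\Sigma^0_1$ event in $S \oplus \text{br}$: run a level-by-level enumeration of $T(n)$ in which, at each enumerated node $m$, we wait for $\text{br}(m)$ many successors of $m$ to appear via $S$ before proceeding, and declare $T(n)$ \emph{closed} at the first stage by which every enumerated node has all its children enumerated. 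This procedure terminates exactly when $T(n)$ is finite. Let $s_1, \dots, s_k$ be the immediate successors of $x_j$, where $k = \text{br}(x_j)$. Since exactly one $s_\ell$ has $T(s_\ell)$ infinite, running the closure procedure in parallel on each $T(s_i)$ eventually witnesses $k-1$ of them closing by some common stage $t$; the remaining successor is $x_{j+1}$, and $t$ is as required.

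The main obstacle, to the extent there is one, is arranging this parallel bookkeeping correctly, but no deeper issue arises. Iterating the above computes the cofinal sequence $i = x_0 \prec x_1 \prec \cdots$ along $X$ from $S \oplus \text{br}$. The concluding Kruskal argument in Lemma \ref{lem:ntse2} is then applied to the finite side trees $T(x_j) \setminus T(x_{j+1})$, which are finite because no $\omega$-node lies above $i$, so the remaining $0'$ usage there reduces to $S \oplus \text{br}$ in exactly the same manner as in Corollary \ref{cor:succ1}. This yields the required nontrivial self-embedding computable from the join of the successor relation and the branching function.
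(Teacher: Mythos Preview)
Your proof is correct and follows essentially the same approach as the paper: reduce the $\omega$-node case to Corollary~\ref{cor:succ1}, use the branching function to know how many successors of $x_j$ to wait for, and use the fact that $S \oplus \text{br}$ can detect when a finite subtree has been fully enumerated to single out the unique infinite successor. Your explicit ``closure procedure'' is exactly what the paper's one-line remark (``the successor relation together with the branching function can tell when a finite tree has stopped growing'') is gesturing at.
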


\begin{proof}
Consider the proof of Lemma \ref{lem:ntse2}.  If there is an $\omega$-node $m \succeq i$, then this proof reduced to the proof of Lemma 
\ref{lem:ntse1} so we are done by Corollary \ref{cor:succ1}.  Otherwise, we used $0'$ twice in the definition of the sequence $i = x_0 \prec x_1 \prec \cdots$.  
First, we used it to find a stage 
$s$ such that all the successors of $x_j$ had appeared by stage $s$.  However, if we know the branching function then we can calculate the 
number of successors of $x_j$ (which in this situation is finite) and we can use the successor relation to find this number of successors.  Second, we 
used $0'$ to determine the unique infinite successor of $x_j$.  Because the successor relation together with the branching function can tell when a 
finite tree has stopped growing, we can use them to make this determination as well.  
\end{proof}

The same type of argument does not work in the case of a computable type 3 tree.  
As we will show in Section \ref{sec:type3}, there is an infinite computable tree $T$ which is finitely branching (and hence 
has no maximal infinite nodes), has no isolated paths and for which every nontrivial self-embedding computes $0''$.  We claim that the branching function 
for this tree $T$ is computable from $0'$.  To calculate $\text{br}(n)$, ask $0'$ if there is a node $m \succ n$.  If not, $\text{br}(n) = 0$.  If so, use 
$0'$ to find a successor $n_0$ of $n$.  Ask $0'$ if there is a node $m \succ n$ such that $n_0 \not \preceq m$.  If not, $\text{br}(n) = 1$.  If so, 
use $0'$ to find a second successor $n_1$ of $n$.  Because $T$ is finitely branching, this process must eventually stop with a complete set of 
successors $n_0, \ldots, n_{k-1}$ for $n$.  Therefore, $0'$ can compute both the branching function and the successor relation in $T$.  Since 
every nontrivial self-embedding of $T$ computes $0''$, there cannot be such a self-embedding computable from the join of the successor relation 
and the branching function.  

\section{Type 1 trees}
\label{sec:type1}

Recall that a computable type 1 tree is a computable tree which has a maximal infinite node.  By Lemma \ref{lem:ntse1}, $0'$ computes a nontrivial 
self-embedding for such trees.  In this section, we show that this result is optimal in the sense that there is a computable 
type 1 tree for which every nontrivial self-embedding computes $0'$.  We also show that there is a computable type 1 tree $T$ such that 
for all computable trees $S \cong T$, $S$ does not have a computable self-embedding.  

The \textbf{height of a node} $n$ in $T$ (denoted by $\text{ht}(n)$) is the size of the set $\{ m \mid m \prec n \}$.  For example, the root of 
any tree has height 0 and the successors of the root have height 1.  The \textbf{height of a finite tree} $T$ (denoted $\text{ht}(T)$) is the 
maximum height of a node of $T$.    Frequently, our examples of type 1 trees have an $\omega$ branching root $\lambda$ and have $T(x)$ finite 
for all $x \neq \lambda$.  Recall that for each $x$ of height 1, we say that $T(x)$ is a \textbf{successor tree} of $T$.  

\begin{thm}
\label{maxinf}
There is a computable type 1 tree $T$ such that any nontrivial self-embedding of $T$ computes $0'$.
\end{thm}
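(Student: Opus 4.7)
The plan is to construct a computable tree $T$ of type 1 by a finite-injury priority argument, with root $\lambda$ having infinitely many children $v_0, v_1, \ldots$ and each $T(v_n)$ a finite rigid tree; this makes $\lambda$ the unique maximal infinite node. Because each $T(v_n)$ is finite, the restriction of any self-embedding $\varphi$ of $T$ to $T(v_n)$ is a bijection of $T(v_n)$, which rigidity forces to be the identity; since $\lambda$ is the unique least element it must be fixed. Hence $\varphi$ is completely determined by the injection $\pi:\omega\to\omega$ defined by $\varphi(v_n) = v_{\pi(n)}$, subject to the constraint $T(v_n) \hookrightarrow T(v_{\pi(n)})$ for every $n$, and $\varphi$ is nontrivial if and only if $\pi \neq \mathrm{id}$. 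Thus the theorem reduces to building shapes $T(v_n)$ for which every such non-identity $\pi$ has degree at least $\mathbf{0}'$.

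Alongside $T$ we enumerate a c.e.\ set $K$ of degree $\mathbf{0}'$. The shape of $T(v_n)$ is a rigid ``base'' tree to which we attach a rigid ``flag'' decoration the first time $n$ enters $K$. Flagged and unflagged shapes will be designed so that flagged trees do not embed into unflagged trees, forcing any valid $\pi$ to satisfy $\pi(K) \subseteq K$; a further signature attached to each flag (keyed to the stage at which $n$ entered $K$) forces $\pi|_K$ to be close to the identity. The construction also arranges, via chain-length discrepancies between flagged and unflagged shapes, that $K$ can be read off uniformly from $\pi$: roughly, the set $\{n: \pi(n)=n\}$ will agree with $K$ wherever $\pi$ is forced to differ from the identity.

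The priority argument satisfies positive requirements $P_e$ coding $\mathbf{0}'$ into $K$, and negative requirements $N_e$: for each Turing functional $\Phi_e$, if $\Phi_e$ happens to be a nontrivial self-embedding of $T$ then $K \leq_T \Phi_e$. The strategy for $N_e$ monitors $\Phi_e$'s action on a dedicated block of children and, upon seeing $\Phi_e$ propose to move a $v_n$ to $v_m$ with $m\neq n$, either extracts the desired $K$-bit directly from the embedding constraints, or adjusts the still-growing shape of $T(v_n)$ (by adding finitely many rigid nodes) to force $\Phi_e$ to fail as an embedding. A careful priority ordering of the $N_e$ and $P_e$ avoids injury loops, so that each $T(v_n)$ stabilizes as a finite rigid tree and $T$ remains computable.

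The hard part will be the combinatorial design of the shapes so that they simultaneously (a) are rigid at every stage; (b) produce exactly the desired embedding incompatibilities between flagged and unflagged status, together with enough signature information to pin down $\pi$; (c) admit the infinitely many embeddings among successor trees forced by Kruskal's Lemma, so that nontrivial self-embeddings genuinely exist and the diagonalization against $\Phi_e$ is not vacuous; and (d) survive the finite-injury management with all $T(v_n)$ remaining finite, so that $\lambda$ stays the maximal infinite node and $T$ stays type 1. The interaction between these four constraints is the principal obstacle; once the shape library is right, the verification that every nontrivial self-embedding computes $\mathbf{0}'$ is a direct combinatorial reading of $K$ off of the induced injection $\pi$.
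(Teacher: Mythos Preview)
Your proposal contains a fundamental conceptual error and a secondary technical error.

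\textbf{The main gap.} Your negative requirements $N_e$ are indexed by Turing functionals $\Phi_e$, i.e., by \emph{computable} candidate self-embeddings, and your strategy for $N_e$ explicitly allows ``adjust[ing] the still-growing shape of $T(v_n)$ \ldots to force $\Phi_e$ to fail as an embedding.'' But the theorem asks that \emph{every} nontrivial self-embedding of $T$---of arbitrary Turing degree---compute $0'$. Diagonalizing so that a particular $\Phi_e$ fails to be an embedding says nothing about non-computable self-embeddings, and since $T$ is infinite it certainly has nontrivial self-embeddings (indeed $0'$-computable ones, by Lemma~\ref{lem:ntse1}). A list of requirements $N_e$ indexed by $e\in\omega$ cannot cover all self-embeddings; you need a \emph{coding} argument, not a diagonalization. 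Your first two paragraphs gesture at a coding (flags on $T(v_n)$ for $n\in K$), but it is never made to do the work: you do not explain how an arbitrary $\pi$ reveals whether $n\notin K$, and the moment you fall back on ``force $\Phi_e$ to fail'' you have abandoned coding altogether.

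\textbf{A technical error.} The claim that ``the restriction of any self-embedding $\varphi$ of $T$ to $T(v_n)$ is a bijection of $T(v_n)$, which rigidity forces to be the identity'' is false: if $\varphi(v_n)\in T(v_m)$ with $m\neq n$, then $\varphi\restriction T(v_n)$ is an embedding into $T(v_m)$, not a self-map of $T(v_n)$. (Nor is it automatic that $\varphi(v_n)$ lands at height~$1$; you would have to engineer the shapes to force this.) Rigidity of the individual $T(v_n)$ therefore does not by itself make $\varphi$ determined by $\pi$.

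\textbf{What the paper does.} The paper's argument is pure coding with no per-$e$ diagonalization. The successor trees are simply finite chains whose heights form a ``staircase'' pattern governed by movable markers $m_0<m_1<\cdots$ with $K[i]=K_{m_i}[i]$. The height constraints guarantee that for any weakly nontrivial $\varphi$, iterating $\varphi$ on a moved node produces a computable-in-$\varphi$ function that dominates the sequence $\langle m_i\rangle$, and any such dominating function computes $K$. No attention is paid to individual candidate embeddings; the structure of $T$ alone forces every self-embedding to decode $K$.
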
  
\begin{proof}

Our proof will show that even the weakly nontrivial self-embeddings of $T$ compute $0'$.  
We describe the tree before explicitly constructing it. 0 will be the root of $T$ and the set of successors of 0 will be the set of positive even numbers 
$\mathbb{E}^+$.  Each subtree $T(n)$ with $n\in\mathbb{E}^+$ will be a finite tree with no branching nodes and $\text{ht}(T(n)) \geq n$. (That is, $T(n)$ is a 
finite linear order of length at least $n$.  The exact length of this order will be determined during the construction.)   We construct a sequence of positive even 
numbers $2=m_0<m_1<m_2<\dots $ which will have the following properties:

\begin{itemize}
\item[I.] For all $i \in \omega$, $\text{ht}(T(m_i)) < \text{ht}(T(m_{i+1}))$; 
\item[II.] For all $i \in\om$ and $p, q\in\mathbb{E}^+$, if $m_i \leq p <q< m_{i+1}$ then ht$(T(p))>{\rm ht}(T(q))$ and if $i > 0$, then 
$\text{ht}(T(q)) > \text{ht}(T(m_{i-1}))$;
\item[III.] Let $K=\bigcup_{s=0}^{\infty} K_s$ be a fixed computable enumeration of a complete c.e.~set.  For all $i$, $K[i]=K_{m_i}[i]$.  
(Recall that for any set $X$, $X[i] = \{ n < i \mid n \in X \}$.)  
\end{itemize}

We picture $T$ as looking like a series of strictly descending staircases.  That is, for any $i$, the subtrees $T(m_i), T(m_i+2), T(m_i+4), \ldots, T(m_{i+1}-2)$ 
are all linear orders which are decreasing in height (but all taller than $T(m_{i-1})$).  The subtree $T(m_{i+1})$ jumps up in height (to be taller than 
$T(m_i)$) and begins another sequence of subtrees of decreasing height (but all taller than $T(m_i)$) which continues until we reach $T(m_{i+2})$.  
The idea behind this tree is 
that if $q \in \mathbb{E}^+$ is such that $m_i \leq q < m_{i+1}$, then $T(q)$ does not embed into any subtree of the form $T(p)$ where $p \in \mathbb{E}^+$ 
satisfies $p < m_i$ or $q < p < m_{i+1}$. (It follows from Properties I and II that in both of these cases $\text{ht}(T(q)) > \text{ht}(T(p))$.) 
Therefore, if $\delta$ is a weakly nontrivial self-embedding 
which moves the node $q$, then $\delta$ must map $T(q)$ into a subtree $T(p)$ such that $p \in \mathbb{E}^+$ and either $m_i \leq p < q$ or $m_{i+1} \leq p$.  
In the former case, by iterating $\delta$ we must arrive at some $k \geq 1$ such that $\delta^k$ maps $T(q)$ into $T(p)$ where $p \in \mathbb{E}^+$ and 
$p > q$ (and so $p > m_{i+1}$ by the previous comments).  

Properties I, II and III are sufficient to guarantee that any weakly nontrivial self-embedding $\varphi$ computes $K$.  
Because $\varphi$ is weakly nontrivial, there must be an 
$n\in\om$ such that $\varphi(n)\neq n$.  Fix any such  $n$.  For all $m>0$ let $\lfloor m\rfloor$ be the unique element of 
$\mathbb{E}^+$ such that $m\in T(\lfloor m\rfloor)$.  We define a strictly increasing  function $\psi$ from $\varphi$ as follows:

\ms
\noi$ \psi(0)=\lfloor n\rfloor$

\ms
\noi$ \psi(s+1)= \lfloor \varphi^k(n)\rfloor$ where $k=k(s+1)$ is the least natural number  such that $\lfloor \varphi^k(n)\rfloor > \psi(s)$.

\ms
(The existence of $\psi$ follows from the comments above about the general form of $T$.)  
Once we prove by induction that $\psi(s)\geq m_s$ for all $s\in \om$, we will have by Property III that  $K[i]=K_{\psi(i)}[i]$ and hence that $\psi \geq_T K$.  
Since $\varphi\geq_T \psi$, we have $\varphi\geq_T K$ as claimed. 

The base of the induction is $\psi(0) = \lfloor n \rfloor \geq 2=m_0$.  Suppose that $\psi(s)\geq m_s$. Let $j$ be such that $m_j\leq \psi(s)<m_{j+1}$.  
Then $j\geq s$ as $\la m_s\ra$ is an increasing sequence.  If $j>s$, then we are done as $\psi$ is increasing, so we can assume that 
$m_s \leq \psi(s)<m_{s+1}$. 
  
Property I ensures that for all $l$ if $\psi(s)<l<m_{s+1}$, then $T(\psi(s))\not\hookrightarrow T(l)$.    Therefore, for all $t\geq k=k(s)$, 
\[
\lfloor \varphi^t(n)\rfloor > \psi(s) \implies \lfloor \varphi^t(n)\rfloor\geq m_{s+1}.
\]
But $k(s+1) > k(s)$ and so in particular $\psi(s+1)\geq m_{s+1}$ as required.

It remains to give the construction of a computable type 1 tree $T$ satisfying I, II and III.  We build $T$ in stages.  At stage $s$ we build $T^s$ and $T$ will be 
$\bigcup_s T^s$.  $T^0$ will consist of all the even nodes as above as well as an infinite/coinfinite computable set of odd numbered nodes arranged so 
that for all $e\in\mathbb{E}^+$, ht$(T(e))=e$.  (That is, $T(e)$ is a linear chain of length $e$.)  

We use a movable marker argument to create the sequence $\la m_i\ra$. As we do this we also ensure that I, II and III are satisfied. We describe a uniformly 
computable sequence $\la m_{i,s}\ra$ with the properties
\begin{itemize}
\item[i.] $\A i \, ( m_{i,0}=2i),$
\item[ii.] $\A i, s \, ( m_{i,s}<m_{i+1,s}),$
\item[iii.] $\A i,s \, ( m_{i,s}\leq m_{i,s+1}),$
\item[iv.] $\A i \, ( \lim_s m_{i,s}$ exists).
\end{itemize}
For each $i$, $m_i$ is defined to be $\lim_s m_{i,s}$. We enumerate $K$ one element at a time.  Suppose $s$ is a stage at which 
$i\in K_{s+1}\smallsetminus K_s$ and let $k\geq i$ be the smallest number such that $m_{k,s}\geq s+1$. Then we set

\[ 
m_{j,s+1}=
\begin{cases} 
m_{j,s} & \text{ if } j< i \\
m_{k+t, s} & \text{ if } j=i+t, \  (t \in \om).
\end{cases}
\]
                                                   
At the same time it is necessary to adjust the subtrees $T^s(e)$ with $e\in \mathbb{E}^+$.  We leave all successor trees in $T$ unchanged except perhaps 
those $T^s(e)$ with $m_{i-1,s} = m_{i-1,s+1} \leq e < m_{k,s} = m_{i,s+1}$ (if $i=0$ take $m_{i-1}=2$). To the subtrees $T(e)$ with $m_{i-1,s}\leq e < m_{i,s+1}$, 
we add the minimum number of nodes to the top of each subtree, retaining the property that there are no branching nodes and ensuring that 
Properties I and II are preserved.

The argument that the tree $T$ and the sequence $\la m_i\ra$ have the required properties is now just the typical movable marker argument, made explicit in 
the following lemmas.

 \begin{lem}
For every $j$, $\lim_s m_{j,s} = m_j$ exists.
\end{lem}                                  
                                           
\begin{proof}
Let $s$ be such that $K_s[j]=K[j]$.  Then for all $t\geq s$, if $i\in K_{s+1}\smallsetminus K_s$, then $i> j$, so for all $t\geq s$, $m_{j,t}=m_{j,t+1}$.       
\end{proof}

\begin{lem}
Every successor tree $T(e)$ with $e\in\mathbb{E}^+$  is finite.
\end{lem}

\begin{proof}
Fix $e\in\mathbb{E}^+$. Let $i$ be such that $m_i > e$.  Once $m_{i,s}$ reaches its limit $m_i$, $T^s(e)$ will never grow again.  At each stage 
before this limit is reached, at most finitely many elements are added to $T(e)$.  Therefore, $T(e)$ is finite.  
\end{proof}

\begin{lem}
$T$ has Properties I, II and III.
\end{lem}

\begin{proof} 

The fact that $T$ has Properties I and II follows immediately from the fact that these properties are explicitly retained at each stage in the construction and the 
previous two lemmas.

To see that $T$ satisfies Property III, fix any $j\in \om$. If $s$ is the largest stage at which $i\in K_{s+1}\smallsetminus K_s$ for some $i\leq j$, 
then by the construction: 
\[
m_j= m_{j,s+1}\geq m_{i,s+1} = m_{k,s}\geq s+1,
\]
where $k$ is as in the construction above.
But $K_{s+1}[j]=K[j]$ by our choice of $s$, so $K_{m_j}[j]=K[j].$
\end{proof}

This completes the proof of Theorem \ref{maxinf}.
\end{proof}

The proof of Theorem \ref{maxinf} leads to several corollaries concerning the connection between the complexity of nontrivial self-embeddings of 
computable type 1 trees $T$ and the complexity of natural algebraic relations on $T$.

\begin{cor}
\label{cor:compsucc}
There is a computable type 1 tree $T$ such that $T$ has a computable successor relation and every nontrivial 
self-embedding of $T$ computes $0'$.
\end{cor}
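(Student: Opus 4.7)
The plan is to take the tree $T$ built in the proof of Theorem~\ref{maxinf}, with one minor bookkeeping adjustment, and to verify directly that this $T$ has a computable successor relation; Corollary~\ref{cor:compsucc} then follows immediately from Theorem~\ref{maxinf}. I will arrange the construction so that at every stage at least one chain extension takes place (for instance, extending $T(2)$ by one node whenever the enumeration of $K$ triggers no extension of its own), and so that the odd numbers used for extensions are always taken in increasing numerical order from the unused pool. This adjustment is innocuous: Properties I and II only impose lower bounds on heights, so extending further preserves them, and Property III is untouched because the $m_i$ are defined purely from the marker dynamics. With these conventions, infinitely many odd numbers are used over the construction, every odd number is placed in $T$ by a computable bound on the stage, and so the underlying set of $T$ is $\omega$, which is computable.

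The key structural fact is that the construction only ever adds each new node as the new top of some existing chain $T(e)$; no new node is ever inserted strictly between two nodes already present in $T$. Consequently, once $n$ and $m$ have both been added to the finite approximation $T^s$, the relations $n\preceq m$ and ``$m$ is the immediate successor of $n$'' are permanent: no intermediate $k$ with $n\prec k\prec m$ can appear later, since any such $k$ would have to be above some previously existing top, hence above both $n$ and $m$ or comparable with neither. Given $n,m\in\omega$, I therefore simulate the construction to a stage $s$ at which both nodes have been placed into $T^s$ --- such a stage exists by the previous paragraph --- and then decide $n\preceq m$ and the immediate-successor relation by a finite inspection of $T^s$. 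Both decisions are effective, so the successor relation is computable.

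The main obstacle to this plan would be if new nodes could appear strictly between already-placed nodes, which would break the simulate-and-check procedure. The top-only extension policy of the Theorem~\ref{maxinf} construction rules this out, so the simulation returns the correct answer at the first stage where both nodes are in $T^s$, and the corollary follows from Theorem~\ref{maxinf} applied to this $T$.
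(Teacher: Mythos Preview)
Your core argument---that nodes are only ever appended to the top of a chain, so the successor relation between two nodes is fixed once both are present and can therefore be decided by simulating to that stage---is exactly the paper's one-sentence proof.

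However, your padding adjustment contains an error. You justify extending $T(2)$ at idle stages by asserting that ``Properties I and II only impose lower bounds on heights, so extending further preserves them.'' This is false for Property I: it requires $\text{ht}(T(m_0)) < \text{ht}(T(m_1))$, and since $m_0 = 2$, repeatedly extending $T(2)$ will eventually violate this inequality. If there are infinitely many idle stages (and there can be: whenever the element $i$ entering $K$ at stage $s$ already has $m_{i,s} \geq s+1$, no markers move and no nodes need be added), then $T(2)$ becomes infinite, destroying both Property I and the type~1 structure of $T$. The padding idea is fine in spirit, but you must pad somewhere that does not interfere with the staircase constraints---for example by extending the currently tallest chain, or by simply arranging that the number of odd elements consumed by stage $s$ is a computable function of $s$ so that membership in the domain is decidable without forcing a node into any particular $T(e)$. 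Alternatively, no padding is needed at all: the paper just invokes the tree already built in Theorem~\ref{maxinf}, whose computability is part of that theorem's statement, and observes the top-only growth property.
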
  

\begin{proof}
The tree $T$ constructed in Theorem \ref{maxinf} has a computable successor relation because we only add nodes above the top node in $T(e)$ at 
any given stage.  Therefore, if $n,m$ are nodes in $T$ at stage $s$, then $S(n,m)$ holds if and only if it holds at stage $s$.  (Because each nontrivial 
self-embedding of $T$ computes $0'$ and there is such an embedding computable from the join of the successor function and the branching function, 
the branching function for $T$ has degree $0'$.)   
\end{proof}

\begin{cor}
There is a computable type 1 tree $T$ such that $T$ has a computable branching function and every nontrivial 
self-embedding of $T$ computes $0'$.  
\end{cor}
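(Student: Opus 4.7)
The plan is to modify the construction of Theorem~\ref{maxinf} so that the branching function becomes computable, while retaining the $0'$-hardness of nontrivial self-embeddings. My modification replaces each chain $T(e)$ from Theorem~\ref{maxinf} with a caterpillar: at every level $i\geq 1$ of the chain $e=v^e_0\prec v^e_1\prec\cdots\prec v^e_{h_e}$ I would attach a sibling leaf $\ell^e_i$ as a second successor of $v^e_{i-1}$, and at the top $v^e_{h_e}$ I would attach extra terminator leaves so that every spine node has a uniform branching value of $2$.

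By assigning spine nodes, sibling leaves, and terminator leaves to three disjoint, computably decidable label classes, the branching function can be read off the label class alone, returning $2$ on spine labels and $0$ on leaf labels. Since each dressed $T(e)$ still contains a chain of length exactly $h_e$ and the attached leaves cannot be used to extend any chain further, Properties I, II, and III from Theorem~\ref{maxinf} still hold. The $\psi$-iteration argument from that proof then recovers $K$ from any nontrivial self-embedding $\varphi$, giving $\varphi\geq_T 0'$.

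The hard part will be keeping $T$ and $\preceq$ computable during the dynamic extensions of the movable-marker construction, since the position of the terminator leaves at the top of each chain depends on the non-computable height $h_e$ and node labels cannot be reused once committed. My plan is to pre-allocate, for each $e$ and each potential top position $h$, two leaf labels that serve as the terminator leaves if the chain stops at $h$, together with a separate spine label $v^e_{h+1}$ and a sibling label $\ell^e_{h+1}$ for the case that the chain extends past $h$; the movable-marker rules would then ensure that exactly one of these two possibilities is realized in the limit. The main technical obstacle, as I see it, is to verify that this coordination produces a tree whose branching function and partial order are genuinely computable rather than merely $0'$-computable, and this requires a more delicate analysis of the marker dynamics than is needed in Theorem~\ref{maxinf} itself.
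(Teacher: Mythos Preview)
Your instinct to modify the construction of Theorem~\ref{maxinf} is right, but you are working much harder than necessary, and the difficulty you flag at the end is real and is not resolved by your pre-allocation scheme.

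The paper's modification is a single line: rather than adding new nodes at the \emph{top} of each chain $T(e)$, insert them at the \emph{bottom}, between the root $e$ of $T(e)$ and its current successor. This sacrifices computability of the successor relation (the successor of $e$ keeps changing), but the branching function becomes trivial: the leaves of $T$ are exactly the nodes that were leaves at stage~$0$ (a computable set), every other non-root node has branching~$1$, and the root has branching~$\infty$. The heights $\text{ht}(T(e))$ are unaffected by where along the chain the new nodes are inserted, so Properties I--III and the entire $\psi$-iteration argument carry over verbatim.

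Your caterpillar-with-terminators scheme, by contrast, cannot be made to work as described. For the branching function to be read off the label class, every spine-labeled node must end up with branching exactly~$2$. But $\preceq$ must be computable, so once the current top $v^e_h$ is in $T$ you must commit all order relations involving it. If you commit two terminator leaves as its successors and a marker move later forces the chain to grow, you cannot place $v^e_{h+1}$ above $v^e_h$ without either giving $v^e_h$ a third successor or giving a leaf-labeled terminator a successor---in either case the label class no longer determines the branching. If instead you leave $v^e_h$ without successors until the marker settles, then the final top $v^e_{h_e}$ is a spine-labeled node with branching~$0$, and deciding which spine node this is requires exactly the non-computable datum $h_e$. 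Pre-allocating separate labels for the two eventualities does not help: the obstruction is not a shortage of names but the irrevocability of the partial order once declared. (The caterpillar structure could be rescued by growing it from the bottom, but then the extra leaves serve no purpose and you are back to the paper's one-line argument.)
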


\begin{proof}
This corollary follows by altering the construction in Theorem \ref{maxinf} such that any new nodes which are added to $T(e)$ at stage $s$ 
are placed between the node $e$ (the root of $T(e)$) and its current successor.  By making this change, the nodes which are leaves at stage 0 remain 
leaves throughout the rest of the construction.  Therefore the branching function is equal to 
$\infty$ for the root of $T$ and is equal to 1 for all other nodes except those nodes which are leaves at stage 0.  (By reasoning similar to the 
parenthetical remark at the end of Corollary \ref{cor:compsucc}, the successor relation has degree $0'$ for this tree.)  
\end{proof}

We next turn to the question of working with nontrivial self-embeddings for isomorphism types of computable type 1 trees.  

\begin{thm}
\label{isomaxinf}
There is a computable type 1 tree $T$ such that no computable tree classically 
isomorphic to $T$ has a computable nontrivial self-embedding.
\end{thm}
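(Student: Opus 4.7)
My plan is to refine the movable-marker construction of Theorem~\ref{maxinf} by adding an outer diagonalization against all pairs $(S_e, \varphi_e)$, where $S_e$ ranges over the computable trees and $\varphi_e$ over the partial computable functions. As before, $T$ will have root $\lambda$ with infinitely many successors, each the root of a finite linear chain, so that its isomorphism class is determined by the multiset $H_T$ of chain heights. For each $e$, I impose the requirement $R_e$: either $S_e \not\cong T$, or $\varphi_e$ is not a nontrivial self-embedding of $S_e$.

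The basic observation driving every $R_e$ is that a nontrivial self-embedding $\varphi_e$ of any $S_e \cong T$ must, since the root of $T$ is its unique infinite node and each finite chain admits no nontrivial self-embedding, map some depth-$1$ node $y$ into a distinct successor subtree $S_e(y')$ with $\text{ht}(S_e(y')) \geq \text{ht}(S_e(y))$. So if, at some stage $s$, we observe such depth-$1$ witnesses $y \neq y'$ with $\varphi_e(y) \succeq y'$ in the approximation to $S_e$, we learn that $S_e$'s multiset of heights must contain values $a \leq b$, where $a$ and $b$ are the stage-$s$ apparent heights of $S_e(y)$ and $S_e(y')$. The strategy for $R_e$ is to reserve a block of successor-tree indices in $T$ and, once a witness is spotted, to redistribute heights within that block so that $H_T$ is incompatible with the heights $S_e$ has already committed to exceeding --- for instance, by forcing the multiplicity of each sufficiently tall value in $H_T$ to differ from that of $S_e$.

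This diagonalization is layered on top of the $K$-coding of Theorem~\ref{maxinf}: in any reserved block where no witness for $R_e$ ever materialises, the staircase pattern of Theorem~\ref{maxinf} is preserved, so $T$ is still a valid type 1 tree. The main obstacle is coordinating these actions across infinitely many $R_e$'s while preserving the finiteness of every successor subtree and the computability of $T$ as a whole. Because ``$\varphi_e$ produces a witness and $S_e$ matches $T$ on enough structure'' is a $\Sigma^0_2$ event, a finite-injury priority argument with movable markers assigned to each $R_e$ should suffice: higher-priority requirements may displace lower-priority ones only finitely often, and each marker and each subtree height eventually stabilises by the usual limit-lemma argument. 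The resulting $T$ is computable, of type 1 with $\lambda$ as the maximal infinite node, and meets every $R_e$, yielding the theorem.
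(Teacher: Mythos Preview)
There is a genuine gap in the diagonalization mechanism. With linear chains as the successor trees, the only commitment the witness $(y,y')$ extracts from $S_e$ is the single inequality $\text{ht}(S_e(y))\leq\text{ht}(S_e(y'))$; after stage $s$ the opponent is still free to grow both chains to any final heights consistent with that inequality. Since $S_e$ is trying to realise exactly the multiset $H_T$, whatever you do to $H_T$ the opponent simply copies---in particular your suggestion of ``forcing the multiplicity of each sufficiently tall value in $H_T$ to differ from that of $S_e$'' cannot succeed, because $S_e\cong T$ means $H_{S_e}=H_T$ tautologically. What is missing is a structural \emph{fork}: a designated sub-piece of a component that you can later grow into one of two mutually non-embeddable shapes, choosing which one only \emph{after} $\varphi_e$ has committed its image. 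Chains admit no such fork (shorter always embeds into longer). This is precisely why the paper abandons chains and builds each component $T_i$ out of four small tree types $A,B,C,D$: a type-$A$ piece can be grown into either type $B$ or type $C$, and $B,C$ do not embed into one another, so once $f_i$ sends the designated $A$-piece of $U_\beta$ into some piece of $V_\beta$, one grows the $A$-piece of $T_{u(\beta,s)}$ into whichever of $B,C$ defeats that particular image. The $D$-pieces serve as an intrinsic index (component $T_i$ carries exactly $i$ of them), so that one can track which $T_i$ a component of the opponent's tree is attempting to match.

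A second problem is the priority machinery. You note that the relevant outcome is $\Sigma^0_2$ and then conclude that finite injury suffices; that inference runs the wrong way. After the witnesses $U_\beta,V_\beta$ are fixed, the opponent may repeatedly add $D$-pieces to either one, shifting which $T_i$ they correspond to; whether this happens finitely or infinitely often is a genuine $\Pi^0_2$ question about the opponent's tree, and lower-priority strategies must be placed correctly relative to each possible answer. The paper accordingly runs a tree-of-strategies construction with infinitary outcomes $u_\infty$ and $v_\infty$ (strategies below $u_\infty$ work with indices below $u(\alpha,s)$, those below $v_\infty$ work strictly between $u(\alpha,s)$ and $v(\alpha,s)$, and so on). A straight finite-injury layout will not simultaneously keep every $T_k$ finite and meet every $R_e$.
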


The remainder of this section is dedicated to the proof of Theorem \ref{isomaxinf}.  
We will build a computable tree $T$ such that the root of $T$ is the only $\omega$-node.  
For each successor $n$ of the root, we call the subtree $T(n)$ a \textit{component} of $T$ 
and we will make each component finite.  Because the root of $T$ will be the only infinite 
node, $T$ will have the required form.  

To build $T$, we uniformly construct the sequence $T_0, T_1, \ldots$ of components of $T$.  (These 
components should also have subscripts indicating the current stage of the construction but 
we suppress the stage subscript unless it is not clear from context.)  Each 
component $T_i$ will have height three and will consist of finitely many components each of which 
will be one of the following four types.

 \ms
 
$\tiny{\xymatrix@-1.5pc{ \\  \b \a{dr} &    & \b \a{dl} \\ & \b }}$ 
\hskip1.3cm
$\tiny{\xymatrix@-1.5pc{  \b \a{d} \\ \b \a{dr} &  & \b \a{dl} \\ & \b}}$
 \hskip1.3cm
$\tiny{\xymatrix@-1.5pc{ \\  \b \a{dr} & \b  \a{d} & \b \a{dl} \\ & \b}}$
 \hskip1.3cm
$\tiny{\xymatrix@-1.5pc{ \\  \b \a{drr} & \b  \a{dr} &  & \b \a{dl} & \b \a{dll} \\ & & \b}}$

  Type A. \hskip1.3cm Type B. \hskip1.3cm \ \ Type C. \hskip1.3cm \ \ Type D.
  
\bs

More specifically, each $T_i$ will contain at most one component of type A, at least one component 
of type B, exactly $i$ many components of type D and finitely many components (including possibly none) of type C.  
Consider the form of a nontrivial self-embedding 
$\delta$ of $T$.  We let $\lambda$ denote the root of $T$ and $r_i$ denote the root of the 
component $T_i$.  Because $T$ has height four, $\delta(\lambda) = \lambda$.  Because each 
$T_i$ has height three, $\delta(r_i) = r_j$ for some $j$.  Because $T_i$ has exactly $i$ many 
type $D$ trees, $\delta(r_i) = r_j$ for some $j \geq i$.  Finally, because each $T_i$ is finite, 
if $\delta$ is nontrivial, then there must be some $i$ for which $\delta(r_i) = r_j$ for $j > i$.  By 
considering iterated images of $T_i$, it is clear that there must be infinitely many indices $i$ 
for which $\delta$ maps $T_i$ into $T_j$ for some $j > i$.  Therefore, any nontrivial 
self-embedding of (any isomorphic copy of) $T$ must map infinitely many components 
into disjoint components.  We will exploit this property in our proof.  

Fix two effective enumerations of the partial computable functions: $\varphi_e$ and $f_i$.  (We use 
two different notations to distinguish between the partial computable function which we view as 
determining the $e$-th tree and the partial computable function which we view as giving a potential 
nontrivial self-embedding of this tree.)  We satisfy all requirements of the form:
\begin{gather*}
R_{\langle e,i \rangle}: \varphi_e \, \text{does not compute a tree isomorphic to} \, T \\
\text{or} \, f_i \, \text{is not a nontrivial self-embedding of the tree computed by} \, \varphi_e.
\end{gather*}

To make these requirements precise, we need to explain how we obtain a tree from $\varphi_e$.  
We view $\varphi_e$ as defining a partial computable relation $\preceq_e$ on universe $\omega$ by 
setting $n \not \preceq_e m$ if $\varphi_e(\langle n,m \rangle)$ converges to $0$ and 
$n \preceq_e m$ if $\varphi_e(\langle n,m \rangle)$ converges to a value other than 0.  If $\varphi_e$ 
is total and $(\omega, \preceq_e)$ is a tree, then we refer to this tree as the $\varphi_e$-tree.  
At a stage $s$, we consider the largest set $X$ such that for all $n,m \in X$, 
$\varphi_{e,s}(\langle n,m \rangle)$ converges and call $(X,\varphi_{e,s})$ the 
$\varphi_e$-tree at stage $s$.  (Of course, this finite structure may have already violated the 
axioms for a tree in which case $\varphi_e$ does not compute a tree and we get an easy win.)  

Because each component $T_i$ of our tree $T$ will contain a type B tree, we know that $T$ 
has height four and each component $T_i$ has height three.  Therefore, if the $\varphi_e$-tree 
at stage $s$ has height greater than four, we know it is not isomorphic to our tree $T$.  Furthermore, 
in the case when the $\varphi_e$-tree is isomorphic to our tree $T$, we can identify the root of 
the $\varphi_e$-tree and the roots of each of its components as it is enumerated.  When we say 
that $S$ is a component of the $\varphi_e$-tree at stage $s$, we mean that the $\varphi_e$-tree 
at stage $s$ has height four, that the least element of $S$ is at level 1 in the $\varphi_e$-tree, that $S$ 
contains a type B component and that $S$ contains all the nodes (currently) above its least element.   

For any component $S$ of the 
$\varphi_e$-tree, we can count the number of type D trees occurring in $S$.  As the component 
$S$ grows, this number can never decrease.  (If we ever see a component of $S$ which 
contains a component larger than a type D tree, 
we know the $\varphi_e$-tree is not isomorphic to our tree 
and we get an easy win.)  Finally, if the component $S$ has $i$ many type D trees at 
stage $s$, then the only possible (current) image of $S$ in $T$ is the component $T_i$.  
As the number of type D trees in $S$ grows, the possible image of this component in our tree 
changes, but if the $\varphi_e$-tree is isomorphic to $T$, then this number must eventually 
stop growing.  Therefore, we eventually correctly guess the only possible image of the component $S$ 
in our tree.  

Before giving the full construction, we consider how to satisfy a single requirement $R_{\langle 
e,i \rangle}$ in isolation.  Assume that $(\omega, \varphi_e)$ is a tree of height four (so that 
we do not get an easy win).  We wait for a stage $s$ at which the $\varphi_e$-tree  
contains disjoint components $U$ and $V$ such that $f_i$ is defined on all of $U$ and embeds 
$U$ into $V$.  (By our analysis of nontrivial self-embeddings of trees isomorphic to $T$, 
if the $\varphi_e$-tree is isomorphic to $T$ and $f_i$ is a nontrivial self-embedding of the 
$\varphi_e$-tree, then such components must exist.)  Once we find such components, we fix them 
and define two parameters.  For all stages $t \geq s$, $u(t)$ is equal to the number of type 
D trees in $U$ at stage $t$ and $v(t)$ is equal to the number of type D trees in $V$ at stage $t$. 
(We also assume that neither $U$ nor $V$ ever adds any additional elements to a type D tree.  
Such additional elements would again give us an easy win.  This assumption means that 
$u(t)$ and $v(t)$ are increasing in $t$.)  
For any stage $t \geq s$, we know that $T_{u(t)}$ is the only possible (current) image of 
$U$ in $T$ and that $T_{v(t)}$ is the only possible (current) image of $V$ in $T$.   

At stage $s$, $u(s) < v(s)$ and $f_i$ gives an embedding of $U$ into $V$.  
We \textit{set up to diagonalize} by taking the following two steps.  First, if $T_{u(s)}$ does not 
have a type A component, then we add elements to $T_{u(s)}$ to create a type A component in 
$T_{u(s)}$.  Second, if $T_{v(s)}$ does have a type A component, then we add an element to 
$T_{v(s)}$ to change this component from type A to type C.  (Recall that $T_{u(s)}$ and $T_{v(s)}$ can have at most one type A component.)  

At each stage $t>s$, we calculate $u(t)$ and $v(t)$ 
and check whether either of these parameters have changed.  Assume for the moment that 
neither of these parameters changes at a future stage.  We next check whether at stage $t$, 
$U$ is isomorphic to $T_{u(t)}$ and $V$ is isomorphic to $T_{v(t)}$.  If not, then we go on to the 
next stage.  If so, we check whether $f_i$ is defined on all of $U$ and is an embedding on 
$U$ into $V$.  If not, go on to the next stage.  If so, then we are ready to diagonalize.  

In this situation, we have $U \cong T_{u(t)}$, $V \cong T_{v(t)}$ and $f_i: U \hookrightarrow V$.  
$T_{u(t)}$ and $U$ each have a single type A component (because of our set up to 
diagonalize procedure) 
and we refer to these components as the \textit{designated components} of $T_{u(t)}$ and $U$.  We refer to the 
component in $V$ which is the image of the designated component in $U$ as the 
designated component in $V$.  Because of our set up to diagonalize procedure, we know that the 
designated component in $V$ is either of type B, C or D.  If the designated component 
of $V$ is of type $B$, then we add an element to $T_{u(t)}$ to change the designated component 
of $T_{u(t)}$ from type A to type C.  If the designated component of $V$ is of type $C$ or $D$, 
then we add an element to $T_{u(t)}$ to change the designated component in $T_{u(t)}$ 
from type A to type B.    

Consider what can happen after this diagonalization step.  If the opponent does not change 
$u(t)$ at a later stage, then the only way for the $\varphi_e$-tree to be isomorphic to 
$T$ is for $U$ to change its designated component (currently of type A) into the same type 
of component as the designated component in $T_{u(t)}$ (which is now either type B or C).  
However, we chose the new type for the designated component in $T_{u(t)}$ in such a way 
that $f_i$ cannot be extended to map from this type of component into the designated 
component in $V$.  Therefore, unless the opponent adds new type D components to $U$ to 
change $u(t)$ or adds new type D components to $V$ to change $v(t)$, we win requirement $R_{\langle e,i \rangle}$.   
If the opponent does change $u(t)$ or $v(t)$, then we repeat this process of setting up to 
diagonalize and later diagonalizing.  

To be more specific about this process and give an indication of the full construction, we remove 
the earlier assumption that $u(t) = u(s)$ and $v(t) = v(s)$ for all $t \geq s$.  
At each stage $t > s$, we calculate $u(t)$ and 
$v(t)$ and check whether $u(t) = u(t-1)$ and $v(t) = v(t-1)$.  If $u(t-1) < u(t)$, then $U$ 
has gained extra type D trees and its potential image in $T$ has changed.  In this 
situation in the full construction, we will take an outcome on a tree of strategies indicating 
that $u(t)$ may be approaching infinity in the limit.  Notice that if $u(t)$ does go to infinity in the 
limit, then $U$ is an infinite component of the $\varphi_e$-tree and we win $R_{\langle e,i 
\rangle}$ because $T$ has no infinite components.  Similarly, if $u(t) = u(t-1)$ but 
$v(t-1) < v(t)$, then in the full construction, we will take an outcome indicating that $v(t)$ may 
be approaching infinity in the limit.  Again, if $v(t)$ goes to infinity in the limit, then $V$ is 
an infinite component of the $\varphi_e$-tree and we win $R_{\langle e,i \rangle}$.  

If $u(t) = u(t-1)$ and $v(t) = v(t-1)$ and $f_i: U \hookrightarrow V$ at stage $t$, then we set up 
diagonalize as above.  That is, we add a type A component to $T_{u(t)}$ (if it does not already 
have one) and change the type A component in $T_{v(t)}$ (if it has one) to a type C component.  
We check if $U \cong T_{u(t)}$ and $V \cong T_{v(t)}$.  If so, then we diagonalize as above.  

There are four possible outcomes of this strategy to meet $R_{\langle e,i \rangle}$ ordered 
from highest to lowest priority by $u_{\infty} <_L v_{\infty} <_L fin <_L triv$.  The tree of 
strategies consists of all finite sequences of these outcomes ordered lexicographically using the 
$<_L$ order.  We use the trivial outcome $triv$ before we have defined the components 
$U$ and $V$ and if the $\varphi_e$-tree has height greater than four or enumerates a component 
$S$ which contains a component larger than a type D tree.  (If we always take this outcome, 
then we win $R_{\langle e,i \rangle}$ because of the form of $T$ and 
the form of the nontrivial self-embeddings of any tree isomorphic to $T$.)  We use 
the $u_{\infty}$ outcome whenever the parameter $u(s)$ increases.  (If we take this outcome 
infinitely often, we win $R_{\langle e,i \rangle}$ because $U$ is an infinite component in 
the $\varphi_e$-tree.)  We use the $v_{\infty}$ outcome whenever the parameter $u(s)$ 
retains its value but the parameter $v(s)$ increases.  (If we take this outcome infinitely often, 
then we win because $V$ is an infinite component in the $\varphi_e$-tree.)  We use the 
outcome $fin$ when both parameters $u(s)$ and $v(s)$ stay the same.  (If we take the 
$u_{\infty}$ and $v_{\infty}$ outcomes only finitely often and the $fin$ outcome infinitely often, 
then we win $R_{\langle e,i \rangle}$ by the diagonalization process.)  

We will use $\alpha$ and $\beta$ to denote nodes on the tree of strategies.  If $|\alpha| = 
\langle e,i \rangle$, then $\alpha$ works on requirement $R_{\langle e,i 
\rangle}$ and we use $\varphi_{\alpha}$ and $f_{\alpha}$ to denote $\varphi_e$ and 
$f_i$.  We denote the components chosen by $\alpha$ in the $\varphi_{\alpha}$-tree by 
$U_{\alpha}$ and $V_{\alpha}$ (which also have stage number subscripts which we typically 
suppress) and we denote the parameters associated with $\alpha$ at 
stage $s$ by $u(\alpha,s)$ and $v(\alpha,s)$.  Each strategy will keep two other parameters 
$a(\alpha,s)$ and $b(\alpha,s)$ (described below) to deal with the interaction between different 
strategies.  If a strategy $\alpha$ is initialized, then its components $U_{\alpha}$ and $V_{\alpha}$ 
and its parameters become undefined.  

To see how strategies for different $R$ requirements interact, 
assume that $\alpha \subsetneq \beta$.  If $\alpha*triv \subseteq \beta$, 
then $\beta$ ignores $\alpha$ when it acts.  If $\alpha*fin \subseteq \beta$, then $\beta$ 
assumes that the parameters $u(\alpha,s)$ and $v(\alpha,s)$ have reached their final values 
and $\beta$ makes sure that its chosen components $U_{\beta}$ and $V_{\beta}$ each 
contain more type D trees than $V_{\alpha}$.  (That is, $\beta$ tries to diagonalize using components 
in $T$ which have indices greater than those used by $\alpha$ to diagonalize.)  This restriction causes 
no problems for $\beta$ because if the $\varphi_{\beta}$-tree is isomorphic to $T$ and 
$f_{\beta}$ is a nontrivial self-embedding of the $\varphi_{\beta}$-tree, then $f_{\beta}$ 
must map infinitely many components of the $\varphi_{\beta}$-tree to components which 
contain strictly more type D trees.  

If $\alpha*u_{\infty} \subseteq \beta$, then $\beta$ assumes that the parameter $u(\alpha,s)$ 
will approach infinity in the limit.  In this case, $\beta$ waits to work behind $\alpha$ in the sense 
that $\beta$ only works with components $U_{\beta}$ and $V_{\beta}$ which contain 
strictly fewer type D trees that $T_{u(\alpha,s)}$.  If $\alpha*u_{\infty}$ is on the true path, then 
$u(\alpha,s)$ will go to infinity, so $\alpha$ may occasionally delay $\beta$ from 
acting but will not prevent $\beta$ from succeeding in the end. 

Similar, if $\alpha*v_{\infty} \subseteq \beta$, then $\beta$ assumes that the parameter 
$u(\alpha,s)$ has reached its limit and that $v(\alpha,s)$ will approach infinity.  In this case, 
$\beta$ works in between $U_{\alpha}$ and $V_{\alpha}$ in the sense that $\beta$ works with 
components $U_{\beta}$ and $V_{\beta}$ for which the number of type D trees is strictly greater 
than in $U_{\alpha}$ and is strictly less than in $V_{\alpha}$.  If $\alpha*v_{\infty}$ is on the 
true path, then the parameter $u(\alpha,s)$ eventually reaches a finite value (which $\beta$ 
can work beyond in the sense described above) and $v(\alpha,s)$ does go to infinity, so 
$\alpha$ may cause 
$\beta$ to delay acting occasionally, but will not prevent $\beta$ from succeeding in the end.  

To implement these restrictions, we introduce the parameters $a(\beta,s)$ and $b(\beta,s)$.  
When $\beta$ is first eligible to act (or first eligible to act after having been initialized), 
$a(\beta,s)$ is defined to be large.  (That is, it is defined to be larger than any number used in the 
construction so far.)  The important feature of $a(\beta,s)$ (which we verify after the 
construction) is that it is greater than $u(\alpha,s)$ for all $\alpha$ such that $\alpha*v_{\infty} 
\subseteq \beta$ and it is greater than $v(\alpha,s)$ for all $\alpha$ such that 
$\alpha*fin \subseteq \beta$.  When $\beta$ searches for components $U_{\beta}$ and 
$V_{\beta}$ to use in its diagonalization, it only looks at components in the $\varphi_{\beta}$-tree 
which have strictly more than $a(\beta,s)$ many type D trees.   

At each stage $s$, $\beta$ defines its parameter $b(\beta,s)$ to be the minimum of 
$v(\alpha,s)$ for all $\alpha$ such that $\alpha*v_{\infty} \subseteq \beta$ and 
$u(\alpha,s)$ for all $\alpha$ such that $\alpha*u_{\infty} \subseteq \beta$.  When $\beta$ 
looks for its components $U_{\beta}$ and $V_{\beta}$, it only looks at components in the 
$\varphi_{\beta}$-tree which have strictly less that $b(\beta,s)$ many type D trees.  

Suppose $\beta$ is successful at finding components $U_{\beta}$ and $V_{\beta}$ which satisfy 
these size restrictions.  Because $U_{\beta}$ has more than $a(\beta,s)$ many type D trees, 
$\beta$ is working with components which are beyond (in the sense of the indices of 
potentially isomorphic components in $T$) those used by all $\alpha$ for which 
$\alpha*fin \subseteq \beta$.  Because $V_{\beta}$ has fewer than $b(\beta,s)$ many 
type D components, $\beta$ is working with components which are behind (in the same sense 
as the previous sentence) the components used by each $\alpha$ such that $\alpha*u_{\infty} 
\subseteq \beta$.  Finally, because $U_{\beta}$ has more than $a(\beta,s)$ many type D 
trees and $V_{\beta}$ has fewer than $b(\beta,s)$ many trees, $\beta$ is working between 
the witness components for all $\alpha$ such that $\alpha*v_{\infty} \subseteq \beta$.  Thus, 
these parameters succeed in forcing $\beta$ to work with components of the intuitively 
correct size.  

We now present the formal construction.  At the beginning of stage $s$, $T$ will contain a 
root node plus finite components $T_0, \ldots, T_{s-1}$.  We begin by adding component 
$T_s$ consisting of a single type A component, a single type B component and $s$ many 
type D components.  We proceed to let strategies (beginning with the unique strategy for 
the requirement $R_{\langle 0,0 \rangle}$) act as in the basic module described below.  Once a 
strategy $\alpha$ with $|\alpha| = s$ acts, we end the stage and initialize all strategies of lower 
priority than $\alpha$.  If a strategy is not eligible to act at stage $s$ but is not initialized, then 
its parameters retain their values.  

\textit{Basic module for strategy} $\beta$.  When $\beta$ is first eligible to act (or first eligible to 
act after being initialized), define $a(\beta,s)$ to be large.  This parameter retains its value unless 
$\beta$ is initialized.  At every $\beta$ stage, define 
\[
b(\beta,s) = \min( \{ v(\alpha,s) \mid \alpha*v_{\infty} \subseteq \beta \} \cup 
\{ u(\alpha,s) \mid \alpha*u_{\infty} \subseteq \beta \} )
\]
(we take as a convention that the minimum of the empty set is $\infty$).  If $U_{\beta}$ 
and $V_{\beta}$ were defined at a previous $\beta$ stage (since the last initialization of $\beta$), then define $u(\beta,s)$ to be 
the number of type D components in $U_{\beta}$ and $v(\beta,s)$ to be the number of 
type D components in $V_{\beta}$.  (Throughout this module, we assume that the 
$\varphi_{\beta}$-tree at stage $s$ is a tree of height four and that we have identified its 
root node.  We can identify nodes at level one in the $\varphi_{\beta}$-tree using type B 
trees and we assume that no type D component in the $\varphi_{\beta}$ tree ever grows.  
If any of these conditions are not true, we let $\beta*triv$ act.) 

\textit{Step 1.}  Check if there are disjoint components $U_{\beta}$ and $V_{\beta}$ in the 
$\varphi_{\beta}$-tree such that $U_{\beta}$ has $> a(\beta,s)$ many type D components, 
$V_{\beta}$ has $< b(\beta,s)$ many type D components, $f_{\beta}$ is defined on all of 
$U_{\beta}$ and $f_{\beta}$ is an embedding of $U_{\beta}$ into $V_{\beta}$.  If there are no 
such components, then let $\beta*triv$ act.  

If there are such components, then fix such $U_{\beta}$ and $V_{\beta}$.  (These 
components do not change at future stages unless $\beta$ is initialized.)  Define 
$u(\beta,s)$ and $v(\beta,s)$ as above and \textit{set up to diagonalize}.  First, 
check if $T_{u(\beta,s)}$ has a type A component.  If not, then add such a component to 
$T_{u(\beta,s)}$.  If so, then $T_{u(\beta,s)}$ remains unchanged.  Second, check if 
$T_{v(\beta,s)}$ has a type A component.  If so, then we add an element to this component to 
make it into a type C component.  If not, then $T_{v(\beta,s)}$ remains unchanged.  
Let $\beta*fin$ act.  When $\beta$ is next eligible to act, it acts in Step 2 (unless it is initialized).

\textit{Step 2.}  Let $t$ be the previous $\beta$ stage.  Break into the following three subcases.  Unless 
$\beta$ proceeds to Step 3 (or is initialized), $\beta$ acts in Step 2 again at the next $\beta$ 
stage.
\begin{enumerate}

\item[2(a).]  If $u(\beta,s) > u(\beta,t)$, then let $\beta*u_{\infty}$ act.  

\item[2(b).]  If $u(\beta,s) = u(\beta,t)$ and $v(\beta,s) > v(\beta,t)$, then let $\beta*v_{\infty}$ act.  

\item[2(c).]  If $u(\beta,s) = u(\beta,t)$ and $v(\beta,s) = v(\beta,t)$, then check if $u(\beta,s) < v(\beta,s) < 
b(\beta,s)$.  If not, then let $\beta*fin$ act.  If so, then set up to diagonalize as described in the second paragraph of Step 1.  ($U_{\beta}$ and $V_{\beta}$ 
are already defined, so setting up to diagonalize consists in making sure that the corresponding trees $T_{u(\beta,s)}$ and 
$T_{v(\beta,s)}$ have the correct number of A components.  Also, once we have set up to 
diagonalize with a particular choice of $u(\beta,s)$ and $v(\beta,s)$, future setting up procedures will not add elements to 
$T_{u(\beta,s)}$ and $T_{v(\beta,s)}$ unless these parameters change.)  Check if $U_{\beta} \cong T_{u(\beta,s)}$, 
$V_{\beta} \cong T_{v(\beta,s)}$, $f_{\beta}$ is defined on all of $U_{\beta}$ and gives an embedding 
of $U_{\beta}$ into $V_{\beta}$.  If any of these conditions fail, then 
let $\beta*fin$ act.  If all of these conditions hold, then proceed to Step 3.

\end{enumerate}

\textit{Step 3.}  In this step, $\beta$ diagonalizes as follows.   The single type A components in 
$T_{u(\beta,s)}$ and $U_{\beta}$ are called the designated components of $T_{u(\beta,s)}$ 
and $U_{\beta}$ (respectively) and the image of the designated component in $U_{\beta}$ 
under $f_{\beta}$ is called the designated component of $V_{\beta}$.  If the designated component 
of $V_{\beta}$ is a type B tree, then add an element to the designated component of 
$T_{u(\beta,s)}$ to make it into a type C tree.  If the designated component of $V_{\beta}$ 
is a type C or D tree, then add an element to the designated component of $T_{u(\beta,s)}$ 
to make it into a type B tree.  Let $\beta*fin$ act.  When $\beta$ is next eligible to act, it acts in 
Step 4.

\textit{Step 4.}  Let $t < s$ be the last $\beta$ stage.  Check if $u(\beta,s) = u(\beta,t)$ and 
$v(\beta,s) = v(\beta,t)$.  If so, then let $\beta*fin$ act.  If not, then return to Step 3.  

This completes the formal description of the construction.  As usual, we say that a strategy 
$\alpha$ is on the true path if $\alpha$ is the leftmost strategy of length $|\alpha|$ which is 
eligible to act infinitely often.  Because the tree of strategies is finitely branching and because 
strategies of length up to $s$ get to act at stage $s$, the true path is infinite.  

\begin{lem}
\label{lem:tp}
Let $\beta$ be a strategy on the true path.  
\begin{enumerate}
\item $\beta$ is initialized only finitely often.
\item $a(\beta,s)$ reaches a finite limit $a(\beta)$.  
\item Unless $\beta*triv$ is on the true path, $\beta$ eventually defines the components 
$U_{\beta}$ and $V_{\beta}$ permanently.  Once these components have been permanently 
defined, the parameters $u(\beta,s)$ and $v(\beta,s)$ are always defined and are increasing 
in $s$.  Furthermore, if $\beta*triv$ acts infinitely often, then $\beta*triv$ is on the true path.
\item If $\beta*u_{\infty}$ is on the true path, then $\lim_s u(\beta,s) = \infty$ and 
$U_{\beta}$ has infinitely many type D components.
\item If $\beta*v_{\infty}$ is on the true path, then $\lim_s v(\beta,s) = \infty$ and 
$V_{\beta}$ has infinitely many type D components.
\item If $\beta*fin$ is on the true path, then $\lim_s u(\beta,s) = u(\beta)$ and 
$\lim_s v(\beta,s) = v(\beta)$ both exist, $U_{\beta}$ has $u(\beta)$ many type D components 
(in the limit) and $V_{\beta}$ has $v(\beta)$ many type D components (in the limit).   
\item $\lim_s b(\beta,s) = \infty$.  
\end{enumerate}
\end{lem}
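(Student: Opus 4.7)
The plan is to prove all seven clauses simultaneously by induction on $|\beta|$, using the inductive hypothesis that the previous lemma's conclusions hold for every proper initial segment of $\beta$. At each stage we treat the clauses roughly in the order they are stated, since later clauses depend on earlier ones.

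For clause (1), I would argue that $\beta$ is initialized only when some strategy $\alpha$ strictly to the left of $\beta$ (or a proper initial segment of $\beta$ taking an outcome left of $\beta$'s chosen branch) acts. Let $\alpha_0,\ldots,\alpha_{k-1}$ be the finitely many initial segments of $\beta$ together with all the strategies which could lie strictly left of $\beta$ at lengths $\le |\beta|$. By definition of the true path, each such strategy that lies strictly left of the true path acts only finitely often, and each initial segment takes its true-path outcome after some stage by induction. So after some stage $s_0$, $\beta$ is never initialized again. Clause (2) is then immediate: $a(\beta,s)$ is defined once, as a large number, the first time $\beta$ is eligible after its last initialization, and then retains that value forever, so $a(\beta) := \lim_s a(\beta,s)$ exists.

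For clause (7), I would use parts (4) and (5) inductively: for each $\alpha$ with $\alpha*u_\infty \subseteq \beta$, the inductive hypothesis gives $\lim_s u(\alpha,s) = \infty$, and similarly $\lim_s v(\alpha,s) = \infty$ for each $\alpha$ with $\alpha*v_\infty \subseteq \beta$. Since $b(\beta,s)$ is the minimum over a \emph{finite} collection of such parameters, each of which tends to infinity, we get $\lim_s b(\beta,s) = \infty$. This fact is important for clause (3): it means the upper cutoff imposed on the size of $V_\beta$ is not a permanent obstruction, so there is always enough room for $\beta$ eventually to locate components bounded below by $a(\beta)$ and above by $b(\beta,s)$ if any suitable pair exists at all. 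I would then show that either the search in Step 1 never succeeds (in which case $\beta*triv$ acts every time $\beta$ is eligible after the last initialization, so $\beta*triv$ lies on the true path), or it succeeds at some stage and $U_\beta, V_\beta$ are then fixed forever by the construction; the parameters $u(\beta,s), v(\beta,s)$ are defined from that point on, and they are monotone increasing because type D components, once enumerated, are never removed (removal of a type D component from $\varphi_\beta$ would yield a non-tree and the $triv$ outcome would be taken immediately).

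For clauses (4), (5), (6), I would examine the Step 2 branching. Once $U_\beta, V_\beta$ are fixed, every $\beta$ stage after that takes one of the outcomes $u_\infty$, $v_\infty$, or $fin$. The leftmost outcome taken infinitely often is the true-path outcome. By the definitions in Step 2, $\beta*u_\infty$ is taken exactly when $u(\beta,s)$ strictly increases, so if $\beta*u_\infty$ lies on the true path then $u(\beta,s)\to\infty$, giving (4); the analogous analysis using clauses 2(b) and 2(c) yields (5) and (6). The main obstacle I expect is verifying carefully in clause (3) that, granting $\beta*triv$ is not on the true path, the search of Step 1 must terminate: this requires combining clause (7) (so $b(\beta,s) \to \infty$) with the standing assumption that the $\varphi_\beta$-tree is well-behaved along this outcome and that $f_\beta$ acts as a nontrivial self-embedding, mapping infinitely many components into components with strictly more type D trees, so that eventually such components both exceed $a(\beta)$ and fall below $b(\beta,s)$. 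Once that is established, the rest of the lemma follows by bookkeeping.
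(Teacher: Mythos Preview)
Your overall approach---simultaneous induction on $|\beta|$, treating the clauses roughly in the order stated---is exactly what the paper does, and your arguments for clauses (1), (2), (4), (5), (6), and (7) match the paper's essentially verbatim.

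There is one point of confusion in your handling of clause (3). You correctly state the dichotomy: either Step~1 never succeeds (and then $\beta*triv$ is taken at every $\beta$ stage after the last initialization, so $\beta*triv$ is on the true path), or it succeeds and $U_\beta, V_\beta$ are fixed forever. That dichotomy \emph{is} the entire argument. But you then identify as ``the main obstacle'' the need to verify that Step~1 terminates by appealing to $b(\beta,s)\to\infty$ together with ``the standing assumption that \ldots\ $f_\beta$ acts as a nontrivial self-embedding.'' There is no such standing assumption here, and invoking it would be circular: the fact that $f_\beta$ might fail to be a nontrivial self-embedding is precisely what the later Lemma~\ref{lem:reqsat} establishes, and \emph{that} lemma is where the existence-of-components argument using $b(\beta,s)\to\infty$ belongs. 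In the present lemma the reasoning is purely definitional: if $\beta*triv$ is not on the true path, then by definition some other outcome is the leftmost one taken infinitely often, which forces $\beta$ to leave Step~1 at some stage after its last initialization. No analysis of $f_\beta$ is needed.

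A second, smaller point: for the ``furthermore'' part of clause (3) you should also note (as the paper does) that once $\beta$ has left Step~1, the only way it can take outcome $triv$ again is if the $\varphi_\beta$-tree develops a structural defect (wrong height, a type~D component growing, etc.), and in that case $\beta$ takes $triv$ at \emph{all} subsequent $\beta$ stages, so $\beta*triv$ would be on the true path. Your parenthetical about type~D removal touches on this but does not make explicit that such a defect is permanent.
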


\begin{proof}
The verification of these properties proceeds by induction on $\beta$ and is standard.  For 
Properties 1 and 2, let $s$ be the least $\beta$ stage such that $s \geq |\beta|$ and the path in the 
tree of strategies is never to the left of $\beta$ after stage $s$.  Property 1 follows because 
$\beta$ is never initialized after stage $s$ and Property 2 follows because $\beta$ defines 
$a(\beta,s)$ at stage $s$ and this definition can only be removed by initialization.  

For Property 3, assume that $\beta*triv$ is not on the true path and $\beta$ is not 
initialized after stage $s$.  In this case, $\beta$ must eventually move from Step 1 to Step 2 
in the basic module at some stage after $s$ and $\beta$ defines $U_{\beta}$ and $V_{\beta}$ 
at this stage.  Because $\beta$ is not initialized after this time, the 
only way that $\beta$ could return to taking outcome $\beta*triv$ is if the $\varphi_{\beta}$-tree 
violated the axioms of a tree or had height greater than four or added an extra element to 
some type D component in $U_{\beta}$ or $V_{\beta}$.  In any of these cases, $\beta$ would 
take outcome $\beta*triv$ at all future $\beta$ stages and hence $\beta*triv$ would be on the 
true path.  It is clear that if this situation does not occur, then the parameters $u(\beta,t)$ and 
$v(\beta,t)$ are defined and increasing in $t$ at every subsequent stage.  Furthermore, it follows 
from these comments that if $\beta$ takes outcome $\beta*triv$ infinitely often, then 
$\beta*triv$ is on the true path.  

For Property 4, assume that $\beta*u_{\infty}$ is on the true path and let $t$ be a stage such that $\beta$ is never initialized after $t$ 
and $\beta*triv$ is not eligible to act after $t$.  The strategy $\beta*u_{\infty}$ 
is only eligible to act at $\beta$ stages after $t$ at which the parameter $u(\beta,s)$ has increased.  Because  
this parameter measures the number of type D components in $U_{\beta}$, Property 4 
follows.  The proof of Property 5 is essentially the same.  

For Property 6, assume that $\beta*fin$ is on the true path.  There must be a stage $t$ after which 
$\beta$ is never initialized and none of $\beta*triv$, $\beta*u_{\infty}$ or $\beta*v_{\infty}$ are 
ever eligible to act.  Therefore, after stage $t$, the parameters $u(\beta,s)$ and $v(\beta,s)$ 
never increase and the number of type D components in $U_{\beta}$ and $V_{\beta}$ never 
increase (and none of these type D components grow).  

For Property 7, let $s$ be the least stage such that $\beta$ is never initialized after $s$.  
If there are no strategies $\alpha$ such that $\alpha*u_{\infty} \subseteq \beta$ or 
$\alpha*v_{\infty} \subseteq \beta$, then $b(\beta,t) = \infty$ for all $t \geq s$.  
Otherwise, by the induction hypothesis, for each $\alpha*u_{\infty} \subseteq \beta$, the value of 
$u(\alpha,t)$ approaches infinity and for each $\alpha*v_{\infty} \subseteq \beta$, the 
value of $v(\alpha,t)$ approaches infinity.  Therefore, $b(\beta,t)$ approaches infinity.  
\end{proof}

\begin{lem}
\label{lem:aprop1}
For all strategies $\beta$ and all $\beta$ stages $s$, 
$a(\beta,s)$ is greater than $\max (\{u(\alpha,s) \mid \alpha*v_{\infty} \subseteq \beta \} \cup 
\{ v(\alpha,s) \mid \alpha*fin \subseteq \beta \} )$.  
\end{lem}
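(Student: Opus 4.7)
The plan is to anchor everything to the $\beta$-stage $s_0 \leq s$ at which $a(\beta,\cdot)$ last received its value---that is, the first $\beta$-stage since $\beta$ was most recently initialized (or first became eligible to act). At that moment the construction declares $a(\beta,s_0)$ to be \emph{large}, hence larger than every number so far used in the construction; and because each proper ancestor $\alpha$ of $\beta$ acts on the path at $s_0$ before $\beta$ does, the parameters $u(\alpha,s_0)$ and $v(\alpha,s_0)$ are already in place and so are dominated by $a(\beta,s_0)$. Since $\beta$ is not initialized between $s_0$ and $s$, we have $a(\beta,s) = a(\beta,s_0)$. The task thus reduces to showing that $u(\alpha,\cdot)$ (for $\alpha * v_\infty \subseteq \beta$) and $v(\alpha,\cdot)$ (for $\alpha * fin \subseteq \beta$) do not grow on the interval $[s_0,s]$.

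For this I would invoke the direct coupling between parameter change and outcome built into Step~2 of the basic module. The parameter $u(\alpha,\cdot)$ is only (re)defined at $\alpha$-stages, and any strict increase from one $\alpha$-stage to the next forces the outcome $u_\infty$; likewise, any strict increase in $v(\alpha,\cdot)$ forces $\alpha$ to play either $u_\infty$ or $v_\infty$. In the first case, $\alpha * u_\infty$ is strictly to the left of $\alpha * v_\infty \subseteq \beta$, so any $\alpha$-stage in $(s_0,s]$ at which $u(\alpha,\cdot)$ grew would put the path strictly to the left of $\beta$ and initialize $\beta$, contradicting the choice of $s_0$. In the second case, both $u_\infty$ and $v_\infty$ lie strictly to the left of $fin$, and the same reasoning shows $v(\alpha,\cdot)$ cannot grow on $[s_0,s]$.

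I expect no genuine obstacle here: the argument is a routine piece of priority-tree bookkeeping that exploits the hard-coded correspondence between parameter motion and outcome. The only mild point of care is verifying that $u(\alpha,s_0)$ and $v(\alpha,s_0)$ are already defined (or at worst take a harmless initial value) at the very moment $\beta$ sets $a(\beta,s_0)$; this is immediate because $\alpha \subsetneq \beta$ forces $\alpha$ to act on the path at stage $s_0$ before $\beta$ does. Once this is in hand the induction on the length of $\beta$, combined with the two ``frozen parameter'' observations above, yields the stated bound.
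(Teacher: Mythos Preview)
Your proposal is correct and matches the paper's own proof essentially point for point: the paper also argues that $a(\beta,s)$ is chosen large at the first $\beta$-stage after initialization and then observes that an increase in $u(\alpha,\cdot)$ (for $\alpha * v_\infty \subseteq \beta$) or in $v(\alpha,\cdot)$ (for $\alpha * fin \subseteq \beta$) would force $\alpha$ to take an outcome to the left of $\beta$ and hence initialize $\beta$. Your write-up is somewhat more careful about the bookkeeping (e.g., noting that growth of $v(\alpha,\cdot)$ could trigger either $u_\infty$ or $v_\infty$, and checking that the $\alpha$-parameters are already in place at $s_0$), but the argument is the same.
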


\begin{proof}
This lemma follows from three observations.  First, whenever $\beta$ is initialized, it 
defines $a(\beta,s)$ to be large at the next $\beta$ stage.  This large value is by definition greater 
than $\max (\{u(\alpha,s) \mid \alpha*v_{\infty} \subseteq \beta \} \cup 
\{ v(\alpha,s) \mid \alpha*fin \subseteq \beta \} )$.  Second, 
if $\alpha*v_{\infty} \subseteq \beta$ and $u(\alpha,s)$ increases, then $\alpha$ takes outcome 
$u_{\infty}$ and $\beta$ is initialized.  Third, if $\alpha*fin \subseteq \beta$ and 
$v(\alpha,s)$ increases, then $\alpha$ takes outcome $v_{\infty}$ and $\beta$ is 
initialized.  
\end{proof}

\begin{lem}
\label{lem:setup}
Assume that $\beta$ sets up to diagonalize at stage $s$.  No strategy $\alpha \neq \beta$ 
can add elements after this point to $T_{u(\beta,s)}$ (unless $\beta$ is initialized or $u(\beta,s)$ 
increases at a later stage) or to $T_{v(\beta,s)}$ (unless $\beta$ is initialized or 
$v(\beta,s)$ increases at a later stage).  
\end{lem}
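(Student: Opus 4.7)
My plan is to proceed by case analysis on the position of $\alpha$ relative to $\beta$ in the tree of strategies. The preliminary observation driving the argument is that a strategy adds elements to a component $T_i$ only during a Step~1 or Step~2(c) setup---where it may add a type~A component to $T_{u(\alpha,t)}$ or change a type~A to a type~C in $T_{v(\alpha,t)}$---or during a Step~3 diagonalization, where it changes a type~A to a type~B or C in $T_{u(\alpha,t)}$. In every such case the modified index equals either $u(\alpha,t)$ or $v(\alpha,t)$ at the acting stage $t$. Hence it suffices to show that, for each $\alpha \ne \beta$ and each stage $t > s$ at which $\alpha$ is visited, one has $\{u(\alpha,t),v(\alpha,t)\} \cap \{u(\beta,s),v(\beta,s)\} = \emptyset$, or else that $\alpha$ is not visited at such a stage.

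The two lateral positions are quickly dispatched. If $\alpha$ lies to the left of $\beta$, any action of $\alpha$ at a later stage is performed by a length-$t$ acting strategy on the path to the left of $\beta$, which initializes $\beta$ and contradicts the hypothesis. If $\alpha$ lies to the right of $\beta$, then the length-$s$ descendant of $\beta * fin$ that ends stage $s$ initializes $\alpha$; whenever $\alpha$ is subsequently re-visited for the first time, $a(\alpha,\cdot)$ is freshly defined to be ``large,'' in particular exceeding $u(\beta,s)$ and $v(\beta,s)$, so all future values of $u(\alpha,\cdot)$ and $v(\alpha,\cdot)$ lie strictly above both forbidden indices.

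For the proper ancestor case $\alpha * o \subseteq \beta$, Lemma~\ref{lem:aprop1} applied to $\beta$ gives the required separation. When $o = fin$, one gets $v(\alpha,s) < a(\beta,s) < u(\beta,s)$, so $\alpha$'s parameters lie strictly below the critical pair. When $o = v_\infty$, $a(\beta,s) > u(\alpha,s)$ and $b(\beta,s) \le v(\alpha,s)$ together place $\alpha$'s parameters on either side of $\{u(\beta,s),v(\beta,s)\}$. When $o = u_\infty$, $b(\beta,s) \le u(\alpha,s)$ places both of $\alpha$'s parameters strictly above the critical pair. In each case, any subsequent change in $\alpha$'s parameters would move $\alpha$ onto a leftward outcome, initializing $\beta$; and in the $v_\infty$ sub-case $v(\alpha,\cdot)$ may grow freely but only upward, never returning to the critical indices. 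The $o = triv$ case is trivial: $\alpha$ cannot leave $triv$ without initializing $\beta$, and the $triv$ branch makes no modifications.

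The proper descendant case $\beta * o \subseteq \alpha$ is the main obstacle. Here one uses the fact that the hypothesis restricts $\beta$'s post-$s$ outcomes: when the corresponding critical parameter of $\beta$ is stable, $\beta$ cannot take the outcome that causes it to change, so the sub-case where $o$ is exactly that outcome makes $\alpha$ invisible after $s$. For the other outcomes, one appeals to Lemma~\ref{lem:aprop1} applied to $\alpha$, which bounds $a(\alpha,t)$ from below by $u(\beta,t)$ or $v(\beta,t)$ in the $v_\infty$ and $fin$ sub-cases respectively. The argument is that any change in $\beta$'s relevant parameter between consecutive visits to $\alpha$ forces either an initialization of $\alpha$ or a reset of $a(\alpha,\cdot)$ to a fresh large value, which under the standing hypothesis of parameter stability exceeds the fixed values $u(\beta,s)$ and $v(\beta,s)$. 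Verifying this carefully---tracking which initializations occur at which stages, and ensuring that $\alpha$'s parameters, whose growth is governed by the uncontrolled $\varphi_\alpha$-tree, never coincidentally hit a forbidden value before the next intervening initialization---is the delicate part of the proof.
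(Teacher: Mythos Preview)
Your approach is the same as the paper's: a case split on the position of $\alpha$ relative to $\beta$, with the ancestor cases handled exactly as the paper does via Lemma~\ref{lem:aprop1} and the definition of $b(\beta,s)$. Your lateral-left and lateral-right cases are also correct and match the paper's Cases~1 and~2.

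Where you diverge is in treating the proper-descendant case as ``the main obstacle'' requiring delicate tracking. The paper's organization dissolves this difficulty. The key observation you are not exploiting is that when $\beta$ sets up to diagonalize at stage~$s$ it takes outcome $fin$, so the natural pivot for the descendant case is $\beta*fin$, not $\beta$ itself. This splits descendants into three groups that are each immediate. First, any $\alpha$ with $\beta*fin \subseteq \alpha$ is handled in one line by Lemma~\ref{lem:aprop1} applied to $\alpha$: it gives $a(\alpha,t) > v(\beta,t) \geq v(\beta,s) > u(\beta,s)$ at every $\alpha$-stage $t$, so $u(\alpha,t)$ and $v(\alpha,t)$ both exceed the forbidden indices---no ``coincidental hit'' is possible, and there is nothing delicate to verify. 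Second, any $\alpha$ extending $\beta*triv$ lies to the right of $\beta*fin$, so it is initialized at the end of stage~$s$ and falls under your lateral-right argument. Third, any $\alpha$ extending $\beta*u_\infty$ or $\beta*v_\infty$ can be visited after~$s$ only if $\beta$ takes that outcome, which forces the corresponding parameter of $\beta$ to increase and triggers the ``unless'' clause; you already note this.

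So your final paragraph overstates the difficulty: once you reorganize around $\beta*fin$ rather than $\beta$, Lemma~\ref{lem:aprop1} does all the work for the $fin$-extensions, initialization handles $triv$-extensions, and the ``unless'' clauses absorb the $u_\infty/v_\infty$-extensions. No stage-by-stage tracking of $\alpha$'s parameters against the $\varphi_\alpha$-tree is required.
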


\begin{proof}
$\beta$ can set up to diagonalize in either Step 1 or Step 2 of the basic module.  In either case, 
we have $a(\beta,s) < u(\beta,s) < v(\beta,s) < b(\beta,s)$ and $\beta$ takes outcome 
$\beta*fin$ when $\beta$ performs this action.  To show that no $\alpha \neq \beta$ can 
add elements to $T_{u(\beta,s)}$ or $T_{v(\beta,s)}$ after this stage unless $\beta$ is 
initialized or the parameters $u(\beta,s)$ or $v(\beta,s)$ change, we break into cases 
depending on the relative priority of $\alpha$ and $\beta$.  

\begin{description}
\item[Case 1.] $\alpha <_L \beta$.  In this case, $\beta$ is initialized at the end of the stage when $\alpha$ acts.

\item[Case 2.] $\beta*fin <_L \alpha$.  In this case, $\alpha$ is initialized when $\beta$ sets up to diagonalize 
at stage $s$.  Therefore, $a(\alpha,t)$ for $t > s$ is greater than $v(\beta,s)$ and hence $\alpha$ 
works with trees with more type D components than $T_{v(\beta,s)}$ and cannot add new elements 
to either $T_{u(\beta,s)}$ or $T_{v(\beta,s)}$. 

\item[Case 3.] $\beta*fin \subseteq \alpha$.  By Lemma \ref{lem:aprop1}, the value 
of $a(\alpha,s)$ is greater than $v(\beta,s)$.  As above, $\alpha$ works 
with trees with strictly more type D components that $T_{u(\beta,s)}$ and $T_{v(\beta,s)}$.

\item[Case 4.] $\alpha \subsetneq \beta$.  We break this case into four subcases.

\begin{description}
\item[Subcase 4a.] $\alpha*triv \subseteq \beta$.  In this subcase, $\alpha$ does not add any elements to $T$ without 
taking an outcome to the left of $\beta$ and initializing $\beta$. 

\item[Subcase 4b.] $\alpha*fin \subseteq \beta$.  By Lemma \ref{lem:aprop1}, 
$v(\alpha,s) < a(\beta,s)$.  Hence, $\alpha$ works with trees with strictly fewer type D 
components than $T_{u(\beta,s)}$ and $T_{v(\beta,s)}$.  If  either the parameter 
$u(\alpha,s)$ or $v(\alpha,s)$ should increase at a later stage, $\alpha$ would take outcome 
$\alpha*u_{\infty}$ or $\alpha*v_{\infty}$ and $\beta$ would be initialized. 

\item[Subcase 4c.] $\alpha*v_{\infty} \subseteq \beta$.  By Lemma \ref{lem:aprop1} and the definition of $b(\beta,s)$ 
\[
u(\alpha,s) < a(\beta,s) < u(\beta,s) < v(\beta,s) < b(\beta,s) < v(\alpha,s).
\]
Therefore, $U_{\alpha}$ has strictly fewer type D components than 
$T_{u(\beta,s)}$ (unless $u(\alpha,s)$ later increases in which case $\beta$ is initialized) 
and $V_{\alpha}$ has strictly more type D components than $T_{u(\beta,s)}$ and 
$T_{v(\beta,s)}$ (unless these parameters increase at a later stage). 

\item[Subcase 4d.] $\alpha*u_{\infty} \subseteq \beta$. By the definition of $b(\beta,s)$ 
\[
u(\beta,s) < v(\beta,s) < b(\beta,s) < u(\alpha,s).
\]
Therefore, $\alpha$ works with components that contain strictly more type D trees that 
$T_{u(\beta,s)}$ and $T_{v(\beta,s)}$ (unless these parameters increase at a later stage). 

\end{description}
\end{description}

\end{proof}

\begin{lem}
\label{lem:finite}
Each component $T_k$ is finite in the limit.
\end{lem}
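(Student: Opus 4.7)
My plan is to split the elements ever added to $T_k$ by the contributing strategy, and bound both the number of relevant strategies and the contribution of each. The component $T_k$ is built at stage $k$ with finitely many nodes: one type A sub-component, one type B sub-component, and $k$ type D sub-components. All further additions to $T_k$ are performed by some strategy $\beta$ acting in Step~1, Step~2(c), or Step~3 at a stage $s$ with $u(\beta,s)=k$ or $v(\beta,s)=k$.

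The key observation is that the construction never introduces or enlarges a type D sub-component: it only creates new type A sub-components or upgrades an existing type A to type B or C. Hence $T_k$ retains exactly $k$ type D sub-components forever, so the condition $u(\beta,s)=k$ or $v(\beta,s)=k$ forces $a(\beta,s)<k$ by the size constraint imposed in Step~1. Because $a(\beta,s)$ is assigned a value larger than every number already used in the construction at each freshness of $\beta$, and is preserved between consecutive initializations, only freshness events occurring at stages small enough that $k$ has not yet been exceeded can yield $a(\beta,\cdot)<k$. Since only finitely many strategies are eligible to act at each of these stages, only finitely many strategy-instances (pairs of $\beta$ and freshness epochs) can ever have $a(\beta,\cdot)<k$ and hence ever touch $T_k$.

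Next I plan to show that each such strategy-instance contributes only boundedly many additions. Within an instance the parameters $u(\beta,\cdot)$ and $v(\beta,\cdot)$ are non-decreasing, by Property~3 of Lemma~\ref{lem:tp}, so each of the events $u(\beta,s)=k$ and $v(\beta,s)=k$ holds on a contiguous block of $\beta$-stages. Each individual visit to Step~1, Step~2(c), or Step~3 adds at most a bounded number of elements to $T_k$: at most three nodes for installing a fresh type A sub-component, one node to convert a type A into type C during a set-up, and one node per diagonalization. The main anticipated obstacle is ruling out an infinite Step~3/Step~4 cycle while $u(\beta,s)=k$ is held fixed; I expect to resolve this by verifying that each re-entry into Step~3 is driven by a strict increase of the non-decreasing parameter $v(\beta,\cdot)$, and combining this with Lemma~\ref{lem:setup} (which guarantees that no other strategy can interfere with $T_k$ while $\beta$ is in the relevant configuration) to bound the total number of re-entries before either $v$ leaves the relevant range or $\beta$ is initialized.

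Putting these three pieces together, only finitely many nodes are ever added to $T_k$ beyond its initial finite configuration, so $T_k$ is finite in the limit.
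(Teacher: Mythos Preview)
Your overall decomposition matches the paper's proof: first argue that only finitely many strategy-instances can ever satisfy $u(\beta,s)=k$ or $v(\beta,s)=k$ (since $a(\beta,s)$ is chosen large, so $a(\beta,\cdot)<k$ can hold for only finitely many instances), and then argue that each such instance contributes only finitely many nodes to $T_k$. Your first step is correct and essentially identical to the paper's.

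The second step has a genuine gap. Your plan to bound an infinite Step~3/Step~4 cycle relies on the claim that re-entries are indexed by strict increases of $v(\beta,\cdot)$ and that eventually ``$v$ leaves the relevant range.'' But nothing you invoke forces this: within a single instance $V_\beta$ may acquire unboundedly many type~D sub-components, and $b(\beta,s)$ grows as well, so there is no a~priori ceiling that $v(\beta,\cdot)$ must exit. Lemma~\ref{lem:setup} only excludes interference from \emph{other} strategies; it does not bound $\beta$'s own repeated actions on $T_k$. If your mechanism were the operative one, an adversary enumerating $V_\beta$ with ever more type~D's (while keeping $U_\beta$ at exactly $k$ type~D's) would drive infinitely many set-up/diagonalize rounds on $T_k$.

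The paper resolves this point differently and more directly: it observes that $\beta$ diagonalizes at most once per choice of $U_\beta,V_\beta$ --- once Step~3 executes, $\beta$ passes to Step~4 and does not re-run the set-up/diagonalize pair for that instance. Hence an instance's total effect on $T_k$ is: at most one conversion of a type~A while $v(\beta,\cdot)=k$, and (if $u(\beta,\cdot)$ later reaches $k$) at most one installation of a type~A followed by at most one Step~3 conversion. You should replace the ``$v$ leaves the relevant range'' argument with this direct appeal to the construction.

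A minor point: citing Property~3 of Lemma~\ref{lem:tp} for the monotonicity of $u(\beta,\cdot)$ and $v(\beta,\cdot)$ is slightly off, since that lemma concerns strategies on the true path. The monotonicity you need holds for any instance and follows immediately from the definition (these parameters count type~D sub-components in a structure that only grows).
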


\begin{proof}
By the construction, a component $T_k$ can only grow at stage $s > k$ if there is a strategy 
$\beta$ such that $u(\beta,s) = k$ and $\beta$ sets up to diagonalize or diagonalizes 
at stage $s$, or such that $v(\beta,s) = k$ and $\beta$ sets up to diagonalize at stage $s$.  Because 
the parameters $a(\beta,s)$ are always chosen large, only finitely many strategies $\beta$ 
can ever have $u(\beta,s) = k$ or $v(\beta,s) = k$ for any fixed value of $k$.  Therefore, it suffices to show that such 
strategies only cause $T_k$ to grow finitely often.  

Consider the case when $v(\beta,s) = k$ and $\beta$ sets up to diagonalize.  In this 
situation, $\beta$ could cause $T_k$ to grow by adding an element to change a type A 
component in $T_k$ into a type C component.  By Lemma \ref{lem:aprop1}, unless 
$\beta$ is initialized or $v(\beta,s)$ increases at a later stage, no other strategy can add 
an element to $T_k$ after stage $s$.  (Even if $u(\beta,s)$ later increases and $\beta$ 
sets up to diagonalize again with the same value of $v(\beta,s)$, the tree $T_k$ will not grow 
because we have already removed the type A component and no other strategy can have added 
a new type A component.)  

If $\beta$ is initialized after setting up to diagonalize, then it will work with components 
which contain strictly more type D components than $T_k$ in the future.  Therefore, in this case 
$\beta$ causes only finitely much change to $T_k$.  

If the value of $v(\beta,s)$ increases at a later stage, then $\alpha$ might reach a later 
stage $t$ in which $u(\beta,t) = k$ and $\beta$ sets up to diagonalize at stage $t$.  In this 
situation, $\beta$ will change $T_k$ by adding a type A component.  (We could also arrive 
at this situation without having $v(\beta,s) = k$ at some earlier stage in which case $\beta$ 
will only add a type A component if $T_k$ does not already have such a component.)  

By Lemma \ref{lem:aprop1}, unless $\beta$ is initialized or the value of $u(\beta,t)$ increases 
at a later stage, no strategy $\alpha \neq \beta$ can add elements to $T_k$.  If 
$\beta$ is later initialized or if $u(\beta,t)$ increases at a later stage, then $\beta$ will work 
with components with strictly more type D trees than $T_k$ and hence $\beta$ will only cause 
only finitely much growth to $T_k$.  

If $\beta$ later diagonalizes with $u(\beta,t) = k$, then $\beta$ will add an element to $T_k$ to 
change the type A component to either type B or type C.  However, because $\beta$ diagonalizes 
at most once with any given components, it will not add any more elements to $T_k$ at a 
future stage.  Therefore, $\beta$ only adds finitely many elements to $T_k$.  
\end{proof}

\begin{lem}
\label{lem:reqsat}
All requirements $R_{\langle e,i \rangle}$ are satisfied.
\end{lem}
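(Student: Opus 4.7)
The plan is to fix $\langle e, i \rangle$ and let $\beta$ be the strategy of length $\langle e, i \rangle$ on the true path. By Lemma~\ref{lem:tp}, $\beta$ has a unique outcome on the true path and is initialized only finitely often, so I would split into the four cases according to which of $\beta*triv$, $\beta*u_\infty$, $\beta*v_\infty$, $\beta*fin$ lies on the true path, and show each case forces one of the two disjuncts of $R_{\langle e,i \rangle}$.

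The two ``infinite'' outcomes are the easiest: by parts (4) and (5) of Lemma~\ref{lem:tp}, the witness $U_\beta$ or $V_\beta$ contains infinitely many type D sub-components, hence is an infinite component of the $\varphi_e$-tree; but Lemma~\ref{lem:finite} says every component of $T$ is finite, so the $\varphi_e$-tree cannot be isomorphic to $T$. For $\beta*triv$, either the $\varphi_e$-structure is immediately disqualified (not a tree, height greater than four, a type D sub-component that grows, or a sub-component not of types A, B, C, or D), or else $\beta$ never finds witness components. In the latter sub-case I would use that $a(\beta,s)$ stabilizes at a finite value and $b(\beta,s)\to\infty$ (Lemma~\ref{lem:tp}), combined with the pre-construction observation that any nontrivial self-embedding of a copy of $T$ must send infinitely many components into disjoint components with strictly more type D trees, to conclude that if the $\varphi_e$-tree were isomorphic to $T$ and $f_i$ a nontrivial self-embedding, suitable $U_\beta, V_\beta$ would eventually exist, contradicting that $\beta$ never finds any.

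The real work is the $\beta*fin$ case. Suppose for contradiction that the $\varphi_e$-tree is isomorphic to $T$ and $f_i$ is a nontrivial self-embedding. Then $u(\beta,s)$ and $v(\beta,s)$ converge to finite values $u(\beta)$, $v(\beta)$, and by Lemma~\ref{lem:setup} no other strategy ever perturbs $T_{u(\beta)}$ or $T_{v(\beta)}$ after $\beta$ sets up to diagonalize. Since each component of $T$ is determined up to isomorphism by its number of type D sub-components, at all sufficiently late stages $U_\beta \cong T_{u(\beta)}$, $V_\beta \cong T_{v(\beta)}$, and $f_i$ is defined on all of $U_\beta$ and embeds it into $V_\beta$; so the conditions of Step~2c hold and $\beta$ proceeds to Step~3. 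There $\beta$ replaces the designated (type A) sub-component of $T_{u(\beta)}$ by type~C if the designated sub-component of $V_\beta$ is type B, and by type~B otherwise.

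The heart of the argument, and the main point I would need to verify carefully, is the combinatorial claim that type~B does not embed as a subtree into type~C or type~D (the grandchild of type~B has nowhere to go in a tree of height one) and that type~C does not embed into type~B (the root of type~C has an antichain of size three among its successors, while in type~B the successors of the root contain only antichains of size two). Granting this, for the $\varphi_e$-tree to remain isomorphic to $T$, the designated sub-component of $U_\beta$ must grow to match the new type of the designated sub-component of $T_{u(\beta)}$; but then $f_i$ restricted there would have to embed this new sub-component into the designated sub-component of $V_\beta$, contradicting the combinatorial claim. Thus one of the two disjuncts of $R_{\langle e,i\rangle}$ must fail in every case, and the lemma follows.
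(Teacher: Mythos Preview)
Your proposal is correct and follows essentially the same argument as the paper. The only organizational difference is that you split explicitly into the four outcome cases, whereas the paper assumes the contradiction hypothesis (that the $\varphi_e$-tree is isomorphic to $T$ and $f_i$ is a nontrivial self-embedding) at the outset and lets that hypothesis implicitly dispose of the $triv$, $u_\infty$, and $v_\infty$ outcomes in the course of showing that $\beta$ eventually finds permanent $U_\beta$, $V_\beta$ with stable $u(\beta)$, $v(\beta)$ and then diagonalizes successfully. Your explicit verification that type~B does not embed into types~C or~D and that type~C does not embed into type~B is exactly the combinatorial fact the paper is relying on when it says the new type of the designated component ``does not embed into the designated component of $V_\beta$.'' One small wording point: the sentence ``each component of $T$ is determined up to isomorphism by its number of type D sub-components'' is slightly stronger than what you need or what is literally true; what you actually use (and what is true) is that $T$ has exactly one component with any given number of type~D sub-components, so under the assumed isomorphism $U_\beta$ must correspond to $T_{u(\beta)}$ and hence eventually be isomorphic to it.
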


\begin{proof}
Let $\beta$ be the strategy on the true path such that $|\beta| = \langle e,i \rangle$.  Assume for a 
contradiction that the $\varphi_{\beta}$-tree is isomorphic to $T$ and that $f_{\beta}$ is a 
nontrivial self-embedding of the $\varphi_{\beta}$ tree.  

Let $s$ be a stage after which $\beta$ is never initialized and $a(\beta,s)$ has reached its 
final value.  Because $\lim_s b(\beta,s) = \infty$, there must be a $\beta$ stage $t > s$ and 
disjoint components $U$ and $V$ of the $\varphi_{\beta}$-tree such that 
the number of type D components in $U$ is 
strictly greater than $a(\beta,t)$, the number of type D components in $V$ is strictly less than 
$b(\beta,t)$ and $f_{\beta}$ is an embedding of $U$ into $V$.  At this stage, $\beta$ defines 
$U_{\beta}$ and $V_{\beta}$ permanently.  

By the choice of $U_{\beta}$ and $V_{\beta}$, we know that $f_{\beta}$ is an embedding from 
$U_{\beta}$ into $V_{\beta}$ at stage $t$.  These components may gain new type D components 
after stage $t$, 
but because $f_{\beta}$ is a self-embedding of the $\varphi_{\beta}$-tree, it must eventually 
become defined on all of $U_{\beta}$ as it grows and continue to be an embedding into 
$V_{\beta}$.  Furthermore, it is possible that $V_{\beta}$ gains new type D components more 
quickly than the parameter $b(\beta,s)$ grows.  However, since $\lim_s b(\beta,s) = \infty$, 
we must eventually be in the situation where $V_{\beta}$ has stopped growing and 
it has strictly fewer than $b(\beta,s)$ many type D components.

Therefore, we can assume without loss of generality that $t$ is a $\beta$ stage, that $U_{\beta}$ 
and $V_{\beta}$ have completely enumerated all of their type D components by 
stage $t$, that $V_{\beta}$ has 
strictly fewer than $b(\beta,t)$ many type D components and that $f_{\beta}$ is an 
embedding of $U_{\beta}$ into $V_{\beta}$.  Notice that these assumptions imply that the 
parameters $u(\beta,t)$ and $v(\beta,t)$ have reached their limits and that 
$u(\beta,t) < v(\beta,t) < b(\beta,t)$.   

At stage $t$, $\beta$ sets up to diagonalize (in 
either Step 1 or Step 2 depending on the previous actions of $\beta$).  $\beta$ adds 
a type A component to $T_{u(\beta,t)}$ (if necessary) and changes the type A component in 
$T_{v(\beta,t)}$ to a type C component (if necessary).  Because the $\varphi_{\beta}$-tree 
is isomorphic to $T$ and because neither $U_{\beta}$ nor $V_{\beta}$ gains a type D component 
after stage $t$, there must be a stage $t' > t$ at which $U_{\beta} \cong T_{u(\beta,t')}$ and 
$V_{\beta} \cong T_{v(\beta,t')}$.  At this stage, $\beta$ moves to Step 4 of the basic 
module and diagonalizes by changing the designated component in $T_{u(\beta,t')}$ so that 
it cannot be embedded into the designated component of $V_{\beta}$.  

Because $U_{\beta} \cong T_{u(\beta,t')}$ before this additional element is added, we know that the 
number of each type of component in $U_{\beta}$ and $T_{u(\beta,t')}$ (before the additional 
element is added) match up.  Furthermore, because $\beta$ is never initialized again and both 
$u(\beta,s)$ and $v(\beta,s)$ have reached their limits, by Lemma \ref{lem:aprop1} we know 
that $T_{u(\beta,t')}$ does not change again after stage $t'$.  Therefore, to make 
$U_{\beta} \cong T_{u(\beta,t')}$ (after the extra element is added), $U_{\beta}$ must change 
its designated component to match the new type of the designated component in 
$T_{u(\beta,t')}$.  Furthermore, because $U_{\beta}$ 
is already committed by $f_{\beta}$ to embedding the designated component of $U_{\beta}$ 
into the designated component of $V_{\beta}$ and because the new type of the designated 
component of $T_{u(\beta,t')}$ does not embed into the designated component of $V_{\beta}$, 
the embedding $f_{\beta}$ cannot be extended in a way that is compatible with extending 
the designated component of $U_{\beta}$.  This fact gives the desired contradiction.  
\end{proof}

This completes the verification that our construction succeeds.

\section{Type 2 trees}
\label{sec:type2}

Recall that a type 2 computable tree is one which has no maximal $\omega$-nodes and which has an isolated path.  (The type 2 trees we construct 
below will all be finitely branching and hence will have no $\omega$-nodes at all.)  It is well known that there are such trees $T$ 
which have a single path and such that this path codes $0'$.  (The successor relation on $T$ can even be computable.  Among other places, such a construction is 
contained in the proof that K\"{o}nig's Lemma for finitely branching trees is equivalent to $\text{ACA}_0$ in Simpson \cite{simp:book}.)  We 
construct such a tree and show that any nontrivial self-embedding of it can compute $0'$.    

\begin{lem}
\label{tech}
If $T$ is a tree ordering on $\om$ such that

\begin{enumerate}
\item[a)]  $T$ is finitely branching

\item[b)] $\A m, n  ( m\succeq n \implies  m\geq n)$

\item[c)] $\A n  ( n\succeq \text{the immediate predecessor of $n+1$})$
\end{enumerate}
\noi then $T$ has exactly one path $X$. Furthermore, if this path $X$ is written as $\lambda = x_0 \prec x_1 \prec x_2 \prec \cdots$ where 
$x_{i+1}$ is the successor of $x_i$, then $x_n = \max \{m: {\rm ht}(m)\leq n\}$.   
\end{lem}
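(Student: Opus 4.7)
The plan is to define
\[
M_n := \max\{m \in T : \text{ht}(m) \leq n\}
\]
and show that this sequence is well-defined, forms a $\preceq$-chain in $T$, and is the unique path of $T$, with $x_n = M_n$ in the notation of the lemma. First I would do the routine bookkeeping: finite branching (a) with induction on $n$ makes each level $\{m : \text{ht}(m) = n\}$ of $T$ finite, so $\{m : \text{ht}(m) \leq n\}$ is a finite subset of $\omega$; condition (b) forces the root $\lambda$ to equal $0$, so the set is nonempty and $M_n$ exists. A short argument then shows $\text{ht}(M_n) = n$: if $\text{ht}(M_n) < n$, condition (c) applied to $M_n$ gives $\text{pred}(M_n + 1) \preceq M_n$, so $\text{ht}(M_n + 1) \leq \text{ht}(M_n) + 1 \leq n$, contradicting the maximality of $M_n$.

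The heart of the argument is the following observation, which I would isolate as a preliminary lemma: \emph{for any $y \in T$ and any $m \succ y$ in $T$, one has $m - 1 \succeq y$}. Indeed, (c) gives $m - 1 \succeq \text{pred}(m)$, and since $y$ is a proper tree-ancestor of $m$ it satisfies $y \preceq \text{pred}(m)$, so $m - 1 \succeq y$ by transitivity. Iterating downward shows that for every $y \in T$, the subtree $T(y)$ is an initial segment of $\{y, y+1, y+2, \ldots\}$ in the natural numbers.

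The remainder is short height-matching. To show $M_n \prec M_{n+1}$, let $P = \text{pred}(M_{n+1})$; then $\text{ht}(P) = n$, so $P \leq M_n$, and if the inequality were strict the preliminary lemma applied to $T(P)$ would force $M_n \in T(P)$, whence $M_n = P$ by equality of heights, a contradiction. So $X := (M_0, M_1, M_2, \ldots)$ is an infinite $\preceq$-chain, hence a path of $T$. For uniqueness, let $Y = (y_0 \prec y_1 \prec \cdots)$ be any path and induct on $n$: once $y_n = M_n$, both $y_{n+1}$ and $M_{n+1}$ are children of $M_n$, and if they were distinct then the preliminary lemma applied to $T(y_{n+1})$ would force $T(y_{n+1}) \subseteq \{y_{n+1}, \ldots, M_{n+1} - 1\}$ and hence be finite, contradicting the fact that $Y$ continues infinitely through $y_{n+1}$.

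The main obstacle is the initial-segment preliminary lemma, which is the one place where condition (c) is genuinely exploited; once it is in hand, both ``$X$ is a path'' and ``$X$ is the only path'' follow from two short inductions and height comparisons.
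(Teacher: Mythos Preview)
Your argument is correct. Both your proof and the paper's hinge on using condition (c) to relate the tree order $\preceq$ to the natural order $\leq$, but the decomposition is genuinely different. The paper fixes $x_n = M_n$ and proves directly, by upward induction on $l$, that every $l \geq x_n$ satisfies $l \succeq x_n$; the inductive step uses (c) to see that $\text{pred}(l+1)$ is comparable with $x_n$ and then invokes the maximality of $x_n$ to rule out $\text{pred}(l+1) \prec x_n$. This single fact (that $T(x_n) = \{x_n, x_n+1, \ldots\}$ is cofinite) simultaneously yields that each $x_n$ is an infinite node and that it is the only infinite node at height $n$, so existence and uniqueness of the path drop out together with almost no further work.

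Your route instead isolates the general observation that every $T(y)$ is an interval of $\omega$ starting at $y$, proved by a one-step downward application of (c) and transitivity; you then run two separate short inductions, one to establish $M_n \prec M_{n+1}$ and one for uniqueness. Your preliminary lemma is a clean standalone statement valid for every node, not just the $M_n$, and its proof is arguably more transparent than the paper's induction. The price is a bit more bookkeeping afterward (e.g.\ you implicitly use $M_{n+1} \in T(P)$ to conclude the interval $T(P)$ reaches past $M_n$), whereas the paper bakes the maximality of $x_n$ into the induction itself and thereby collapses the endgame.
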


\begin{proof}
Let $x_n=\max\{m: {\rm ht}(m)\leq n\}$ which exists by a).  For any fixed $n$, we will show by induction  that  $l\succeq x_n$ for all $l\geq x_n$. 
Thus for any $n$, $x_n$ is an infinite node.  By b), $x_n$ has height $n$ and is the only infinite node of height $n$.  
Therefore $x_0 \prec x_1 \prec x_2 \prec \cdots$ gives the unique path through $T$. 

Fixing $n \in \om$, we show that $l \succeq x_n$ for all $l \geq x_n$ by induction on $l$.  
For the base of the induction, we have trivially that $x_n\succeq x_n$. For the induction step, suppose that 
$l\geq x_n$ and $l\succeq x_n$ 
and we show that $l+1\succeq x_n$.  Let $p$ be the immediate  predecessor of $l+1$ in $T$.
 By c), $l\succeq p$ and hence $p$ is comparable to $x_n$. So either $p\succeq x_n$, in which case $l+1\succeq x_n$ and we are done, or 
 $p\prec x_n$, in which case ${\rm ht}(l+1)\leq n$ (as ${\rm ht}(x_n)\leq n$) contradicting the fact that 
 $x_n=\max\{m:{\rm ht}(m)\leq n\}$ as $l+1>l\geq x_n.$ 
\end{proof}

\begin{thm}
\label{iso}
There is a computable type 2 tree $T$ such that any nontrivial self-embedding of $T$ computes $0'$.
\end{thm}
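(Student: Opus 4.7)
The plan is to adapt the staircase construction with movable markers from Theorem~\ref{maxinf} to the type 2 setting, using Lemma~\ref{tech} to ensure that the tree has a unique path. I would build a computable tree ordering $\preceq$ on $\om$ satisfying conditions (a)--(c) of Lemma~\ref{tech}; the lemma then gives $T$ exactly one path $X=\la x_0\prec x_1\prec x_2\prec\cdots\ra$ and $T$ is automatically type 2. At each $x_i$ I would attach one finite linear side chain $B_i$ of height $h_i\geq 1$, so that $x_i$ has just the two successors $x_{i+1}$ and the base $r_i$ of $B_i$. The sequence $\la h_i\ra$ is controlled by movable markers $\la m_{i,s}\ra$ exactly as in Theorem~\ref{maxinf}, yielding analogues of Properties I--III: within each segment $[m_s,m_{s+1})$ the heights strictly decrease; $h_{m_{s+1}}>\max\{h_k:m_s\leq k<m_{s+1}\}$; and $K[s]=K_{m_s}[s]$ for a fixed enumeration of a complete c.e.\ set $K$.

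For the reduction, let $\varphi$ be any nontrivial self-embedding of $T$. Since $X$ is unique, $\varphi$ must send path nodes to path nodes, so $\varphi(x_i)=x_{g(i)}$ for some strictly increasing $g$. For any $y\in B_i$ the image $\varphi(y)$ lies strictly above $x_{g(i)}$ and is incomparable with $x_{g(i+1)}$; unpacking these facts against the form of $T$ forces $\varphi(y)\in\bigcup_{g(i)\leq k<g(i+1)}B_k$, and since $B_i$ is a chain while distinct $B_k$'s are pairwise incomparable, $\varphi(B_i)\subseteq B_{\tilde g(i)}$ for a single index $\tilde g(i)\in[g(i),g(i+1))$ satisfying $h_{\tilde g(i)}\geq h_i$. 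If $g$ were the identity, then $\tilde g(i)=i$ and $\varphi$ would restrict to the only chain self-embedding of each $B_i$---namely the identity---so $\varphi$ would be the identity on $T$, contradicting nontriviality. Hence $g(i_0)>i_0$ for some $i_0$, and strict monotonicity of $g$ and $\tilde g$ yields infinitely many such indices.

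As in the proof of Theorem~\ref{maxinf}, I would then pick $n\in B_{i_0}$ and let $\lfloor m\rfloor$ denote the index of the side branch containing $m$ (arranged by the coding to be a computable function of $m$); define $\psi(0)=\lfloor n\rfloor$ and $\psi(s+1)=\lfloor\varphi^{k}(n)\rfloor$ for the least $k$ with $\lfloor\varphi^{k}(n)\rfloor>\psi(s)$, so that $\psi\leq_T\varphi$. The induction $\psi(s)\geq m_s$ runs as in Theorem~\ref{maxinf}: if $\psi(s)\in[m_s,m_{s+1})$, then the height inequality $h_{\tilde g(\psi(s))}\geq h_{\psi(s)}$, combined with the strictly decreasing heights on $[m_s,m_{s+1})$, forces $\tilde g(\psi(s))\geq m_{s+1}$; and because $\tilde g$ is strictly increasing, $\psi(s+1)=\tilde g(\psi(s))$. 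Property~III then gives $K[s]=K_{\psi(s)}[s]$, establishing $K\leq_T\psi\leq_T\varphi$.

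The main obstacle is the construction itself. Condition~(c) of Lemma~\ref{tech}---that the parent of $n+1$ be an ancestor of $n$---imposes a DFS-like restriction on the order in which new labels may be added, which is in tension with the movable-marker requirement that $h_j$ for arbitrary $j$ must be able to increase at any later stage. The construction must therefore lay out a computable, stage-by-stage labeling in which path extensions and side-chain extensions can be interleaved so as to respect (c) throughout, while still permitting each $h_j$ to grow when a marker moves, and in which the side-branch index function $m\mapsto\lfloor m\rfloor$ remains computable. Once such a labeling is in place, the verification that $T$ satisfies Lemma~\ref{tech} and inherits the staircase structure of Properties I--III is a routine adaptation of the argument in Theorem~\ref{maxinf}.
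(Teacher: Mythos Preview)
Your approach is genuinely different from the paper's, and the reduction you sketch is correct, but you have manufactured your own ``main obstacle.'' The paper does \emph{not} hang side chains off the path and does not use the staircase from Theorem~\ref{maxinf} at all. Instead it codes $K$ directly into the unique path: at stage $s$ it adds the single node $s+1$ as a successor of $n_s=\max\{m\preceq s:K_{s+1}[\mathrm{ht}(m)]=K_s[\mathrm{ht}(m)]\}$. This automatically satisfies (b) and (c) of Lemma~\ref{tech}, and a short argument shows (a); the unique path $X=\la x_n\ra$ then satisfies $K[n]=K_{x_n}[n]$. Any nontrivial self-embedding $\varphi$ must move some $m\in X$ to a strictly higher node on $X$ (since $X$ is the only infinite part of $T$ and finite trees have no nontrivial self-embeddings), whence $\varphi^n(m)\geq x_n$ and $K\leq_T\varphi$. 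This is considerably shorter than porting the staircase machinery.

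Your route can be made to work, but not as you have framed it. The tension you identify between condition~(c) and the movable markers is real and, as stated, unresolved in your proposal---you cannot satisfy (c) while returning to lengthen an earlier $B_j$. The point is that you do not need Lemma~\ref{tech}: in your construction the path can simply be declared (e.g., $x_i=2i$ with $x_{i+1}$ a successor of $x_i$), and uniqueness of the path follows because every $B_i$ is finite, which your marker argument already guarantees. Once you drop (c), the construction is exactly the odd/even layout already used in Theorem~\ref{maxinf}, and nothing further is needed. So the ``obstacle'' is self-inflicted; remove the appeal to Lemma~\ref{tech} and your plan goes through, albeit with more moving parts than the paper's direct coding of $K$ into the path.
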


\begin{proof}
We  construct $T$ to have exactly one infinite path $X$ such that ${\rm deg}_T(X)\geq 0'$.  Fix a c.e.~set $K$ of degree $0'$.  

$\la T, \preceq\ra$ is built computably in stages denoted $\la T_s, \preceq_s\ra $ with 
$T_s= \{0, 1, 2, \dots s\}$ and $\preceq_s=\preceq\cap \bigl( T_s\times T_s\bigr)$ for all $s$.  
$T = \bigcup_s T_s = \om$ and $\preceq = \bigcup_s \preceq_s$.   
At each stage $s$ we  designate an element $n_s$ of $T_s$ to be the immediate predecessor of $s+1$ and $\preceq_{s+1}$ is 
defined to be the reflexive and transitive closure of $\preceq_s \cup \{(n_s, s+1)\}$. 

Let $\preceq_0=\{(0,0)\}$ and at stage $s$ let 
\[
n_s=
\begin{cases} 
\max\{m \preceq s \mid K_{s+1}[{\rm ht}(m)]=K_s[{\rm ht}(m)]\} & \text{ if this set is nonempty,} \\
0& \text{ otherwise}.
\end{cases}
\]

\begin{lem}
$T$ has exactly one infinite path $X$ and this path computes $K$. 
\end{lem}

\begin{proof}
Conditions b) and c) of Lemma \ref{tech} are satisfied immediately by the construction of $T$.  Furthermore, because new nodes are always 
added as (current) leaves in $T$, the height of any node is fixed once it is placed in $T$.  Therefore, we can speak of 
$\text{ht}(x)$ for any $x \in T$ without reference to a stage number.  

To see that a) is satisfied, we reason by contradiction.  Assume that $x \in T$ is an $\omega$-node.  It follows that there are infinitely many 
stages $s_0 < s_1 < \cdots$ such that $n_{s_i} = x$.  At stage $s_0$, $x$ gains $s_0+1$ as a successor and hence $x$ is no longer a leaf 
in $T$ after stage $s_0$.  Consider any stage $s_i$ for $i \geq 1$.  Since $x = n_{s_i}$, we have $x \preceq s_i$ and so $x \prec s_i$ 
(because $s_i$ was added as a leaf at the previous stage and we know $x$ is no longer a leaf in $T$).  Therefore, $x$ has a successor 
$y_i$ such that $x \prec y_i \preceq s_i$.  By the definition of $n_{s_i} = x$, we know $K_{s_i+1}[\text{ht}(x)] = K_{s_i}[\text{ht}(x)]$ but 
$K_{s_i+1}[\text{ht}(y_i)] \neq K_{s_i}[\text{ht}(y_i)]$.  However, $\text{ht}(y_i) = \text{ht}(x)+1$, so $K_{s_i+1}[\text{ht}(x)] = K_{s_i}[\text{ht}(x)]$ and 
$K_{s_i+1}[\text{ht}(x)+1] \neq K_{s_i}[\text{ht}(x)+1]$ for all $i \geq 1$.  Because $K$ is a c.e.~set, there can be at most one such stage $s_i$, 
giving the desired contradiction.  (This argument really shows that $T$ is binary branching.)  Therefore a) holds and $T$ has exactly one path $X$.  

We next show that $X \geq_T K$.  Because $s+1$ is added as a (current) leaf of $T_s$ at stage $s$, the successor relation 
on $T$ is computable.  Therefore, from the set $X$, we can compute the sequence $x_0 \prec x_1 \prec \cdots$ such that $x_0$ is the root of 
$T$, $x_{i+1}$ is the successor of $x_i$ and $X = \{ x_i \mid i \in \omega \}$.  

By Lemma \ref{tech} we know that $x_n = \max\{ m \mid \text{ht}(m) \leq n \}$.  Furthermore, by the proof of Lemma \ref{tech}, we know 
that for all $s \geq x_n$, we have $x_n \preceq s$.  We claim that for all $n$, $K_{x_n}[n] = K[n]$.  Suppose not and fix $n$ such that $K_{x_n}[n] \neq K[n]$.  
Let $s \geq x_n$ be such that $K_{s+1}[n] \neq K_s[n]$.  Because $x_n \preceq s$ and $\text{ht}(x_n) = n$, we have that 
$n_s \prec x_n$.  Therefore, $x_n \not \preceq s+1$ contradicting the fact that $x_n \preceq s+1$.  
\end{proof}

\begin{lem}
Any nontrivial self-embedding $\varphi$ of $T$ computes  $K$.
\end{lem}

\begin{proof}
Let $\varphi$ be a nontrivial self-embedding of $T$ and let $m$ be some node on $X$ such that $\varphi(m)\neq m$. (That such a node exists is a consequence 
of $X$ being the only path and there being no nontrivial self-embeddings of finite trees.)  $\varphi(m)$ must also lie on $X$  as $T$ has only one infinite path.  
By induction one sees that  for all $n$, $\varphi^n(m)\succeq x_n$ and $\varphi^n(m) \in X$.  Therefore, for all $n$, $\varphi^n(m) \geq x_n$, 
$K_{\varphi^n(m)}[n]=K_{x_n}[n]=K[n]$ and $\varphi\geq_T K$.
\end{proof}

This completes the proof of Theorem \ref{iso}.
\end{proof}

The result can be improved slightly by replacing  {\em nontrivial} with {\em weakly nontrivial} in \ref{iso}. 
To show this we construct $T'$ from $T$. $T'$ will have, like $T$, a single isolated path $X'$ that computes $K$. $T'$ will be modified however to 
ensure that any weakly nontrivial self-embedding must move a node on $X'$. 

We say a node $n$ on $T$ is {\em just off $X$} if $n$ is not on $X$ but the immediate predecessor is on $X$. Any weakly nontrivial self-embedding 
of $T$ that fixes every node on $X$ must be weakly nontrivial on some finite tree $T(n)$ with $n$ just off $X$. Any finite tree can be properly 
extended to a finite tree that has no weakly nontrivial self embedding by adding nodes extending the leaves so that no two  leaves have the 
same height. This is what we do to ensure that $T(n)$ has no nontrivial self embedding. Extensions may be added at different stages in the 
construction but we ensure that each leaf is extended only a finite amount. 

We first repeat the construction of $T$ using only the even numbers - adding node $2(s+1)$ as the immediate successor to $2n_s$ at stage $s$. 
We describe the placement of every odd number on $T'$.

We begin with $T'_0=T_0$. As before at stage $s$ we determine $2n_s$ and place $2(s+1)$ as its immediate successor. At stage $s$ we also 
find all leaves extending $2n_s$ except $2(s+1)$ and we properly extend all such leaves with successive 
odd numbers so that any two distinct leaves have different heights.  That is, we are guessing that the path $X$ will pass through $2(s+1)$ and 
so all the other successors of $2n_s$ are just off $X$.  Therefore, we want to extend the leaves above these other successors to have different lengths.  
$T'_{s+1}$ is this extension of $T_{s+1}$. We need only show now that every leaf on $T$ is extended only finitely and that all the odd numbers are used.

As in Lemma \ref{tech}, $x_n'$ (the $n^{\text{th}}$ element in the unique path $X'$ in $T'$) will be the numerically greatest even number of height less 
than or equal to $n$. $x_n'$ is added to $T'$ at stage $s=x_n'/2$ and for all $t\geq s$, $2n_t\succeq x_n'$. So no more extensions will be added to the 
leaves above any node just off $X$ whose height is less than or equal to $n$.  Therefore, each leaf is extended finitely only a finite number of times. 

To see that all the odd numbers are used we merely need to note that there are infinitely many nodes just off $X'$ (otherwise $X'$ would be 
computable)  and that we have decreed that each leaf extending  such a node must be properly extended.                                                                                                                       

We next turn our attention to nontrivial self-embeddings in computable trees $S \cong T$ where $T$ 
is a computable type 2 tree.    

\begin{thm}
There is a computable finitely branching tree $S$ with exactly one infinite path (so $S$ is a type 2 tree) such that no computable tree 
classically isomorphic to $S$ has a computable nontrivial self-embedding.
\end{thm}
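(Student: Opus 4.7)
The plan is to construct $S$ as a computable finitely branching tree whose unique infinite path is $X = x_0 \prec x_1 \prec \cdots$; at each $x_n$ we attach, as off-path immediate successors, a finite collection of small A, B, C, D components in the sense of the proof of Theorem \ref{isomaxinf}, and we call this off-path forest $F_n$. Since $X$ is the only infinite path, $S$ is a type 2 tree, and its isomorphism type is determined by the sequence $(F_n)$. The construction proceeds by finite-injury to satisfy, for each pair $(e,i)$, the requirement that either $\varphi_e$ does not compute a tree isomorphic to $S$, or $f_i$ is not a nontrivial self-embedding of that tree.

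The key structural fact is that any nontrivial self-embedding $f$ of a computable copy $S' \cong S$ must strictly shift the path. If $f$ fixed $X'$ pointwise, then for any $z \in F'_n$ (i.e.\ $z \succ x'_n$ and $z$ incomparable with $x'_{n+1}$), we would have $f(z) \succ x'_n$ and $f(z)$ incomparable with $x'_{n+1}$, forcing $f(z) \in F'_n$; hence $f$ would restrict to an injection of the finite set $F'_n$ into itself, necessarily a bijection, so $f$ would be onto and therefore trivial. Thus $f(x'_n) = x'_{g(n)}$ for a strictly increasing $g$ with $g(n) > n$ for all large $n$, and the same incomparability analysis shows that $f$ sends each off-path component of $F'_n$ into some off-path component of $F'_m$ for some $m \in [g(n), g(n+1))$, giving embeddings of individual components.

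We design $F_n$ so that it contains exactly $n$ type D components (permanent level markers; no strategy ever modifies a type D component), at least one type B component, and designated A/C components available for diagonalization. The construction then follows the template of the proof of Theorem \ref{isomaxinf}. A strategy $\beta$ for $R_{\langle e,i\rangle}$ looks in the $\varphi_e$-tree for disjoint off-path finite components $U_\beta$ and $V_\beta$ on which $f_i$ is defined and acts as an embedding $U_\beta \hookrightarrow V_\beta$, with the number of type D components in the ambient off-path forests of $U_\beta$ and $V_\beta$ lying in the ranges dictated by priority parameters $a(\beta,s), b(\beta,s)$. Once these D-counts stabilize, they identify levels $n < m$ in $S$, and $\beta$ sets up to diagonalize by adjusting the type A components of $F_n$ and $F_m$ in $S$ and then diagonalizes by switching the designated A in $F_n$ to either B or C, in the direction that makes $f_i$'s image unable to extend. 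The tree-of-strategies organization with outcomes $u_\infty, v_\infty, fin, triv$ is carried out exactly as in Theorem \ref{isomaxinf}, with analogous parameters and analogous verification lemmas.

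The main obstacle is that the unique infinite path of the $\varphi_e$-tree is not in general computable, so we cannot directly identify which finite piece of $\varphi_e$'s tree corresponds to $F_n$. We use type D counts as level markers, but recognizing that an off-path forest in $\varphi_e$'s tree has stopped accumulating type D components is inherently a $\Pi^0_2$ condition. The outcomes $u_\infty$ and $v_\infty$ absorb this: if the apparent D-count of $U_\beta$ or $V_\beta$ grows without bound, then either $\varphi_e$'s tree fails to be isomorphic to $S$ (an easy $triv$-style win, since each $F_n$ in $S$ has a fixed finite D-count in the limit) or the outcome lies on the true path and produces an infinite off-path subtree in $\varphi_e$'s tree, again contradicting isomorphism with $S$. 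Verification that each $F_n$ is eventually finite, that $X$ remains the unique infinite path of $S$, and that every requirement is met on the true path then follows the standard finite-injury argument analogous to Lemmas \ref{lem:tp}--\ref{lem:reqsat}.
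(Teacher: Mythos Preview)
Your approach is broadly sound but takes a much more laborious route than the paper, and the step you flag as the ``main obstacle'' is in fact not an obstacle at all.

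The paper does not rerun the priority construction. Instead it takes the type~1 tree $T$ from Theorem~\ref{isomaxinf} (with components $T_i$ of height three) and simply hangs $T_i$ off the $i^{\text{th}}$ node of a single infinite path $a_0 \prec a_1 \prec \cdots$, obtaining $S$. The whole proof is then a reduction to Theorem~\ref{isomaxinf}: given any computable $U \cong S$ with path $B$, one forms a computable $V \cong T$ by deleting $B$ and attaching a fresh root $\rho$, and any nontrivial self-embedding of $U$ restricts to a nontrivial self-embedding of $V$.

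The point you miss is precisely why this reduction is computable. Because every off-path component has height three, a node $x$ of $U$ lies on $B$ iff there is a chain of height four above $x$ (a $\Sigma^0_1$ condition), and $x \notin B$ iff some node with such a chain above it is incomparable with $x$ (also $\Sigma^0_1$). Hence $B$ is computable in \emph{every} computable copy of $S$. Your construction, with off-path forests $F_n$ built from bounded-height A/B/C/D components, has exactly this feature; so the ``ambient off-path forest'' of any node in the $\varphi_e$-tree is computable outright, and the machinery of $u_\infty, v_\infty$ outcomes you propose to handle a non-computable path is unnecessary. Once you recognize that the path is computable, your re-implementation of the type~1 diagonalization along the path collapses to the paper's one-paragraph reduction. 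What the paper's approach buys is brevity and modularity (it uses Theorem~\ref{isomaxinf} as a black box); what your approach would buy, if carried out, is self-containment at the cost of repeating the entire tree-of-strategies argument.
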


\begin{proof} 
Let $\la T, \preceq_T\ra$ be the tree constructed in Theorem \ref{isomaxinf}, let $T_i$ (for $i \in \omega$) be the sequence of components of $T$, 
let $\lambda_T$ denote the root of $T$ and let $\lambda_i$ denote the root of the component $T_i$.  Let $A=\{ a_i: i\in \omega \}$ be a set of distinct 
elements disjoint from $T$. We define the tree $\la S, \preceq_S \ra$ as follows:
\begin{enumerate}
\item $S= (T\smallsetminus \{\lambda_T\})\cup A$,
\item $a_0$ is the root of $S$,
\item $\A x,y \in T \setminus \{ \lambda_T \}   \, ( x\preceq_T y \leftrightarrow x\preceq_S y)$, and 
\item $\A i\in \N \, ( a_{i+1}$ and $\lambda_i$ are  immediate successors of $a_i$).

\end{enumerate}

It is straightforward to see that there is a unique tree $S$ satisfying 1, 2, 3 and 4 and that $S$ has exactly the one infinite path given 
by $a_0 \prec a_1 \prec a_2 \cdots$. 

We claim that any nontrivial self-embedding $\delta$ of $S$ computes a nontrivial self embedding of $T$, and hence by Theorem \ref{isomaxinf}, 
$\delta$ must be noncomputable.  To prove this claim, notice that because $a_0 \prec a_1 \prec \cdots$ is the unique path in $S$, we have that for each 
$i \in \omega$, 
$\delta(a_i) = a_j$ for some $j \geq i$.  Because $\delta$ is nontrivial and every subtree off the unique path is finite, we can fix the least $k$ such that 
$\delta(a_k) \neq a_k$.  For every $i \geq k$, $\delta(a_i) = a_j$ for some $j > i$.  In other words, $\delta$ gives an embedding from $T_i$ into $T_j$.  It 
follows that $\delta$ induces a nontrivial self-embedding  $\delta'$ of $T$ given by

\begin{enumerate}
\item $\delta'(\lambda_T)=\lambda_T$ and
\item for all $x\in T\smallsetminus\{\lambda_T\}\ \, (\delta'(x)=\delta(x))$.
\end{enumerate}

To finish the proof, let $\la U, \preceq_U\ra$ be any computable tree classically isomorphic to $S$. We show that any nontrivial self-embedding $\delta$ of 
$U$ computes a nontrivial self-embedding of a computable tree classically isomorphic to $T$. Hence by Theorem \ref{isomaxinf}, $\delta>_T 0$.

As $U$ is isomorphic to $S$, it too has exactly one infinite path $B$. We claim that $B$ is computable.  It is c.e.~because $x\in B$ if and only if there exists 
a chain of height 4 above $x$ (recall all the components of $T$ have height 3). And it is co-c.e.~because $x\nin B$ if and only if there exists a $b\in B$ 
such that $b$ and  $x$ are incomparable.

We build a computable tree $\la V, \preceq_V \ra$ isomorphic to $T$ as follows. Let $V=(U\smallsetminus B)\cup \{\rho\}$, where $\rho$ is a new element which 
will serve as the root of $V$.  For all $x,y\in V$ define $x\preceq_V y$ if and only if $x=\rho$ or $x\preceq_U y$.  
As $B$ is computable, so is $\la V, \preceq_V \ra$, and it is easy to see that  $V$ is isomorphic to $T$. 

From $\delta$ we compute  a nontrivial self-embedding $\delta'$ of $V$. Let $\delta'(\rho)=\rho$ and for all $x\in U\smallsetminus B$, 
let $\delta'(x)=\delta(x)$. $\delta'$ is clearly computable from $\delta$ and hence $\delta>_T 0$. \end{proof}

\section{Type 3 trees}
\label{sec:type3}

Recall that a type 3 computable tree is an infinite computable tree which has no maximal infinite node and no isolated paths.  In 
particular, any infinite computable binary branching tree which has no isolated paths is a type 3 tree.  In Theorem \ref{thm:ntse}, we 
proved that each computable type 3 tree has a nontrivial self-embedding computable in $0''$.  The next theorem shows that this 
bound is optimal.

\begin{thm}
\label{thm:doublecode}
There is an infinite computable binary branching tree $S$ with no isolated paths 
such that any nontrivial self-embedding computes $0''$.  
\end{thm}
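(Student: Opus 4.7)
The plan is to adapt the component-coding technique of Theorem~\ref{isomaxinf} so that, instead of coding $0'$ into finite components, we code a $\Sigma^0_2$-complete set such as $\mathrm{Inf}=\{e : W_e \text{ is infinite}\}$ into the isomorphism types of \emph{infinite} binary branching gadget subtrees. Since $0''$ computes $\mathrm{Inf}$ and nothing less does, any nontrivial self-embedding that is forced to identify gadget isomorphism types will compute $0''$.

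Concretely, I would build $S$ with a binary skeleton below the root in which every infinite node branches into two infinite subtrees, so the global tree has no isolated paths. Attached to a fixed computable sequence of designated nodes $n_0,n_1,\ldots$ in the skeleton (placed in such a way that the attachments themselves do not destroy the no-isolated-paths property), I place gadget subtrees $G_0,G_1,\ldots$, each of which is itself a binary branching infinite tree with no isolated paths. Each $G_i$ starts as a clean copy of $2^{<\om}$, but whenever a new element enters $W_{e_i,s}$ at stage $s$, I add a distinguished ``flag'' substructure to $G_i$ at a fresh location, in a way that preserves binary branching and the no-isolated-paths condition. If $W_{e_i}$ is infinite then $G_i$ accumulates infinitely many flags in the limit; otherwise only finitely many. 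The flag pattern is engineered rigidly enough that $G_i\hookrightarrow G_j$ implies the flag count of $G_i$ is at most the flag count of $G_j$, so the isomorphism type of $G_i$ literally records whether $e_i\in\mathrm{Inf}$.

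Given a nontrivial self-embedding $\varphi$ of $S$, an argument paralleling the one in Theorem~\ref{isomaxinf} shows that for infinitely many $i$ the map $\varphi$ must carry $G_i$ into some $G_j$ with $j>i$ via an embedding of the entire gadget; here the role played by the finite components $T_i$ of Theorem~\ref{isomaxinf} is now played by the infinite gadgets $G_i$, and the role of ``number of type $D$ trees'' is played by ``flag count''. From the $\varphi$-computable presentation of such gadget-to-gadget embeddings one can, for each $i$, read off a $\varphi$-computable upper bound on the flag count of $G_i$ and then verify it against the stagewise construction, thereby deciding whether $e_i\in\mathrm{Inf}$. This yields $\mathrm{Inf}\leq_T\varphi$ and hence $\varphi\geq_T 0''$.

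The hard part will be managing the dynamic attachment of flags to each $G_i$ so that both the binary-branching and no-isolated-paths conditions of $S$ survive every modification, and so that the rigidity of the flag pattern under embeddings is robust even though flags may keep being added forever to some gadgets. The sharpness of the bound $0''$ (rather than $0'$) is precisely explained by the fact that determining the flag count of $G_i$ is a $\Pi^0_2$ question about $W_{e_i}$, whereas in the type 1 case the analogous question about finite components was only $\Pi^0_1$; this extra quantifier is exactly the extra jump, and hence the ``double coding'' that the construction is named for.
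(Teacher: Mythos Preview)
Your proposal has a genuine gap at the decoding step, and the structural analogy with Theorem~\ref{isomaxinf} breaks down in a way you have not addressed.

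First, the decoding. You say that from a gadget-to-gadget embedding $G_i\hookrightarrow G_j$ one can ``read off a $\varphi$-computable upper bound on the flag count of $G_i$'' and then decide $e_i\in\mathrm{Inf}$. But the flag count of $G_i$ is \emph{infinite} precisely when $e_i\in\mathrm{Inf}$; an embedding $G_i\hookrightarrow G_j$ only tells you that the flag count of $G_i$ is at most that of $G_j$, and this inequality is vacuous when both are infinite. There is no $\varphi$-computable procedure here that outputs a finite number when the answer is ``infinitely many flags'', so you have not explained how $\varphi$ decides the $\Sigma^0_2$ question. In Theorem~\ref{isomaxinf} the analogous step worked because the components were \emph{finite} and the invariant (number of type~D subtrees) was a natural number one could simply count; that is exactly what you lose by making the gadgets infinite.

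Second, the rigidity. In the type~1 case the root is an $\omega$-node, components are finite, and height considerations force any self-embedding to fix the root and send each component into a single component. None of this survives in a binary branching tree with no isolated paths: a nontrivial self-embedding can shift the entire tree strictly downward along a path, and there is no reason it should respect your skeleton/gadget decomposition at all. You would need a mechanism that forces \emph{every} nontrivial self-embedding to carry gadgets to gadgets, and you have not provided one.

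The paper's proof takes a quite different route. Rather than coding into isomorphism types of attached pieces, it codes $0''$ into the \emph{branching levels} of $T$ (the heights at which some node has two infinite successors): these levels are made to dominate a function $a$ with the property that anything dominating $a$ computes $0''$. The key point is that branching levels are a global invariant that every self-embedding must respect, and a sequence of lemmas (culminating in an iteration argument on $\delta^k$) shows that any nontrivial self-embedding $\delta$ yields a node $\alpha$ and an iterate $\iota$ with at least $n$ branching levels below $|\iota^{n+1}(\alpha)|$, so the height function $n\mapsto|\iota^{n+1}(\alpha)|$ dominates $a$. This avoids entirely the need for a gadget decomposition or for the self-embedding to preserve any local structure.
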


Before proving Theorem \ref{thm:doublecode}, we outline the main steps of the proof.  
We begin by giving a particular computable approximation to $0''$ which is conducive to 
our coding methods.  Next, we define a  c.e.~subtree 
$T \subseteq 2^{< \omega}$ such that $0''$ is coded into the branching levels of $T$.  
(A c.e.~subtree $T \subseteq 2^{< \omega}$ is a c.e.~set $T$ of elements of $2^{< \omega}$ which is closed under initial segments.  
The tree order is given by the initial segment relation $\subs$.)  
We say $n$ is a \textit{branching level} of $T$ if there is a string $\sigma \in T$ such  
that $|\sigma| = n$ and both $T(\sigma\ast0)$ and $T(\sigma\ast1)$ are infinite.  We use 
a c.e.~subtree $T$ of $2^{< \omega}$ because it makes the notation easier when verifying 
properties such as where the branching levels occur in $T$ and the fact that $T$ has no 
isolated paths.  

We show that from any nontrivial self-embedding of $T$ we can compute a function dominating 
the branching levels and that any such function computes $0''$.  Finally, we show how to 
define a computable tree $S \cong T$ for which the successor relation is computable.  
Because the branching levels of $T$ are invariant under isomorphisms, we have 
$0''$ coded into the branching levels of $S$.  From any nontrivial self-embedding of 
$S$, we can decode $0''$ as long as we can determine the height of each node in 
$S$.  However, since the successor relation is computable in $S$, we can effectively 
determine the height of any node.  

We begin by developing our computable approximation to $0''$.  
Fix a uniformly c.e.~sequence of c.e.~sets $A_n$ for $n \in \omega$ such that 
$\{ n \, | \, A_n \, \text{is finite} \, \} \equiv_T 0''$.  (For example, we could use the standard enumeration of all c.e.~sets.)  
Without loss of generality, we assume that in the 
uniform enumeration of the $A_n$ sequence, exactly one set gets an element at each stage.  Let 
$f(n) =$ the least $s$ such that the sets among $A_0, \ldots, A_n$ which are finite have 
been completely enumerated by stage $s$.  

\begin{lem}
\label{lem:fcomp}
For any function $b$ which dominates $f$, $0'' \leq_T b \oplus 0'$.
\end{lem}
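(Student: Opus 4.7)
The plan is to show directly that $b \oplus 0'$ computes the set $\{n : A_n \text{ is finite}\}$, which has Turing degree $0''$ by the choice of the sequence $\{A_n\}$. The central observation is that when $b(n) \geq f(n)$, finiteness of $A_n$ is equivalent to the statement ``no element enters $A_n$ after stage $b(n)$'': indeed, if $A_n$ is finite then by definition of $f$ it is completely enumerated by stage $f(n) \leq b(n)$, whereas if $A_n$ is infinite then new elements must enter at arbitrarily large stages regardless of the value of $b(n)$.

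First, I would use $b \oplus 0'$ to compute the function $\chi:\om\to\{0,1\}$ defined by $\chi(n) = 1$ if and only if no element is enumerated into $A_n$ after stage $b(n)$. Uniformly in $n$, this is a $\Sigma^0_1$ question (with parameter $b(n)$), so $0'$ suffices to decide it once $b(n)$ is in hand.

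Next, I would argue that $\chi$ agrees with the characteristic function of $\{n : A_n \text{ is finite}\}$ at every $n$ satisfying $b(n) \geq f(n)$, by the observation above. Since $b$ dominates $f$, the inequality $b(n) < f(n)$ fails for only finitely many $n$, so the two sets differ on a finite set only.

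Finally, since a finite modification does not change Turing degree (the finite symmetric difference may be hardcoded into the reduction non-uniformly), we conclude $\{n : A_n \text{ is finite}\} \leq_T b \oplus 0'$, and hence $0'' \leq_T b \oplus 0'$. There is essentially no obstacle here; the only minor subtlety is the finitely many exceptional $n$ where $b(n) < f(n)$, which is absorbed by the non-uniformity of Turing reducibility.
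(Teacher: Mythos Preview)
Your proof is correct and follows essentially the same argument as the paper: use $0'$ to decide whether $A_n$ receives any element after stage $b(n)$, and observe that for all but finitely many $n$ this correctly determines whether $A_n$ is finite. One tiny quibble: the predicate ``no element enters $A_n$ after stage $b(n)$'' is $\Pi^0_1$ rather than $\Sigma^0_1$, but of course $0'$ decides it either way, so this does not affect the argument.
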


\begin{proof}
Let $k \in \omega$ be such that $f(x) \leq b(x)$ for all $x \geq k$.  To determine whether $n \in 0''$ for $n \geq k$, ask $0'$ whether 
$A_n$ gets an element after stage $b(n)$.  The answer to this question is no if and only if $n \in 0''$.  
\end{proof}

We want to define a computable approximation $f(n,s)$ to the function $f(n)$ so that 
$f(n) = \liminf_{s} f(n,s)$.  To define $f(n,s)$, 
proceed as follows.  If the sets $A_0, \ldots, A_n$ are all empty at stage $s$, then 
set $f(n,s) = 0$.  If at least one of these sets is nonempty but none of them receives a new 
element at stage $s$, then let $f(n,s) = t$ where $t < s$ is the last stage at which one of 
these sets received an element.  

If we are not in one of these two cases, then at stage $s$, exactly one set among 
$A_0, \ldots, A_n$ gets a new element.  
Let $i_{n,s} \leq n$ be such that $A_{i_{n,s}}$ gets a new element at stage $s$ and let 
$t_{n,s} < s$ be the last stage at which $A_{i_{n,s}}$ received an element.  
(If $s$ is the first stage at which $A_{i_{n,s}}$ gets an element, then set $t_{n,s} = 0$.)  Let 
\[
I_{n,s} = \{ j \leq n \, | \, A_j \, \text{has received an element since} \, t_{n,s} \}.
\]
$I_{n,s}$ represents our current guess at which sets among $A_0, \ldots, A_n$ are infinite.  
Let $f(n,s) = t$ where $t < s$ is the greatest stage such that there exists a $j \leq n$ for which 
$j \not \in I_{n,s}$ and $A_j$ gets an element at stage $t$.  (If the sets $A_j$ for $j \not \in 
I_{n,s}$ are all empty or if $I_{n,s} = \{0,1, \ldots,n \}$, then set $f(n,s) = 0$.)  That is, to calculate $f(n,s)$ 
we look at the sets $A_j$ for $j \leq n$ which we currently think are not infinite and take the last 
stage at which one of these sets received a new element.  The function $f(n,s)$ is  
a total computable function.  

\begin{lem}
\label{lem:fprop}
The function $f(n,s)$ satisfies the following properties.
\begin{enumerate}
\item $f(n) = \liminf_{s} f(n,s)$.
\item For every $k > f(n)$, there is a stage $s_k$ such that for all $t \geq s_k$ either 
$f(n,t) = f(n)$ or $f(n,t) > k$. 
\end{enumerate}
\end{lem}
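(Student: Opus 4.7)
The plan is to parse $f(n,s)$ by partitioning the index set as $F = \{j \leq n : A_j \text{ is finite}\}$ and $J = \{j \leq n : A_j \text{ is infinite}\}$. By definition $f(n)$ is the largest stage at which some $A_j$ with $j \in F$ was enumerated, or $0$ if no such stage exists. The guiding intuition is that $I_{n,s}$ is meant to approximate $J$: it always contains $i_{n,s}$, and it contains $j$ whenever $A_j$ has been enumerated at some stage in $(t_{n,s}, s]$. Whenever $I_{n,s} = J$ and $t_{n,s} \geq f(n)$, the indices $j \leq n$ with $j \notin I_{n,s}$ are precisely those in $F$, and the formula defining $f(n,s)$ evaluates to $f(n)$.

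First I would record two preliminary structural observations for sufficiently large $s$. If $s > f(n)$ and $s$ is a Case~3 stage, then $A_{i_{n,s}}$ is enumerated after $f(n)$, so $i_{n,s} \in J$; and for each $j_0 \in J$, only the first enumeration of $A_{j_0}$ after stage $f(n)$ can produce $t_{n,s} < f(n)$, so there are only finitely many Case~3 stages with $t_{n,s} < f(n)$. Beyond these finitely many exceptional stages, every Case~3 stage satisfies $t_{n,s} \geq f(n)$. Using these I can prove the ``$f(n,s) \geq f(n)$ eventually'' half of (1): in Cases~1 and~2 the inequality is immediate from the definitions, and in Case~3 with $t_{n,s} \geq f(n)$ no $j \in F$ belongs to $I_{n,s}$, so if $f(n) > 0$ the witness contributing $f(n)$ to the maximum is an index in $F$.

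To complete (1) I would produce infinitely many stages $s$ with $f(n,s) = f(n)$ by tracking $m(s) = \min_{j \in J} p_j(s)$, where $p_j(s)$ denotes the largest stage $\leq s$ at which $A_j$ was enumerated. Because each $A_j$ with $j \in J$ is enumerated cofinally, $m(s) \to \infty$, and each strict increase of $m$ occurs at a stage where $A_{j_0}$ is enumerated for the $j_0 \in J$ currently achieving the minimum. At any such stage $s$ (eventually past the finitely many exceptional stages) one has $t_{n,s} = m(s-1)$, every other $j \in J$ satisfies $p_j(s-1) > m(s-1)$, and $i_{n,s} = j_0 \in I_{n,s}$, so $J \subseteq I_{n,s}$, yielding $f(n,s) = f(n)$.

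For (2), I would fix $k > f(n)$, choose $s^*$ so that every $A_j$ with $j \in J$ has been enumerated at least once in $(k, s^*]$, and pick $s_k$ large enough to lie past the finitely many exceptional stages above and past all stages needed to force $t_{n,s} \geq s^*$ whenever $s \geq s_k$ is in Case~3. Then for $s \geq s_k$ in Case~3 either $J \subseteq I_{n,s}$, giving $f(n,s) = f(n)$, or some $j^* \in J \setminus I_{n,s}$ satisfies $p_{j^*}(t_{n,s}) > k$ and so contributes to the maximum, giving $f(n,s) > k$; in Case~2 the bound $f(n,s) \geq \max_{j \in J} p_j(s) > k$ holds directly. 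The main bookkeeping obstacle is verifying that the finitely many ``first post-$f(n)$ enumerations'' of the sets indexed by $J$ really exhaust the Case~3 stages with $t_{n,s} < f(n)$, and then coordinating the two lower-bound parameters $f(n)$ and $s^*$ so that the dichotomy $\{f(n)\} \cup (k, \infty)$ is clean.
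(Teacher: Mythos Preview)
Your proposal is correct and follows essentially the same approach as the paper: both partition $\{0,\ldots,n\}$ into the finite and infinite indices, identify the stages at which $I_{n,s}$ coincides with the set $J$ of infinite indices (you via strict increases of $m(s)=\min_{j\in J}p_j(s)$, the paper by tracking the last $A_j$ with $j\in J$ to receive a new element after a given stage---these pick out the same stages), and for Property~2 choose a threshold past which every infinite $A_j$ has already been enumerated beyond $k$. One minor omission is that you should separate out the trivial case $J=\emptyset$, in which $f(n,s)=f(n)$ for all sufficiently large $s$ and both properties are immediate.
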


\begin{proof}
Fix $n$ and break into two cases.  If $A_0, \ldots, A_n$ are all finite, then let 
$u$ be the last stage at which any of these sets gets an element.  Because $f(n,s) = u$ for all 
$s > u$, we have both Property 1 and 2 in this case.  

Otherwise, there is at least one set among $A_0, \ldots, A_n$ which is infinite.  
Let $I$ be the set of all $i \leq n$ such that $A_i$ is infinite and let $u_0 = f(n)$ be the last 
stage such that some $A_j$ with $j \leq n$ and $j \not \in I$ receives an 
element.  Let $u_1 > u_0$ be a stage such that each $A_i$ with $i \in I$ has received 
at least one element between stages $u_0$ and $u_1$.    

Consider any stage $s > u_1$ and split into two cases.  First, if none of the sets 
$A_i$ for $i \leq n$ receives an element at 
stage $s$, then $f(n,s) > u_0$ since $I \neq \emptyset$ and each $A_i$ for $i \in I$ 
received an element after stage $u_0$.  
Second, if one of the $A_i$ sets for $i \in I$ does receive an element at stage 
$s$, then $t_{n,s} > u_0$ since each such set receives an element between stages $u_0$ and 
$u_1$.  Furthermore, $I_{n,s} \subseteq I$ since none of the sets $A_j$ for $j \not \in I$ 
receives an element after stage $u_0$.  

If $I_{n,s} = I$, then $f(n,s) = u_0 = f(n)$.  If $I_{n,s} \subsetneq I$, then $f(n,s) > u_0$ 
since there is an $A_i$ for which $i \in I \setminus I_{n,s}$ and this $A_i$ received an 
element after stage $u_0$.  Therefore, for all $s > u_1$, $f(n,s) \geq u_0 = f(n)$.  

Define a sequence of stages $v_0 < v_1 < v_2 < \cdots$ such that $u_1 < v_0$ and 
at each stage $v_k$, $I_{n,v_k} = I$.  To see that such a sequence exists, consider any stage $t > u_1$.  
We claim there is a stage $s > t$ such that $I_{n,s} = I$.  To find $s$, let $j_n$ be the index such that 
$j_n \leq n$ and $A_{j_n}$ is the last set among $A_i$ with $i \in I$ to receive a new element after 
stage $t$.  Let $s > t$ be the first stage at which $A_{j_n}$ receives a new element.  Since 
$A_{j_n}$ receives a new element at stage $s$, we have $i_{n,s} = j_n$.  As $s > t$ is the first 
stage at which $A_{j_n}$ receives a new element and since $A_{j_n}$ has received an element 
since $u_0$, we have $u_0 < t_{n,s} \leq t$.  Each set $A_i$ with $i \in I$ has now received a new element 
since stage $t$ (and hence since $t_{n,s}$), so $I_{n,s} = I$ as required.  Therefore, we have established the 
existence of the sequence $v_0 < v_1 < \cdots$ with $I_{n,v_k} = I$.    
Since $I_{n,v_k} = I$, we have $f(n,v_k) = u_0 = f(0)$.  Therefore, we have established 
Property 1.  

On the other hand, we can extend our sequence of stages $u_0 < u_1 < u_2 < \cdots$ 
so that each $A_i$, $i \in I$, receives an element between stages $u_k$ and $u_{k+1}$.  
Consider any $s > u_{k+1}$.  If none of the sets $A_i$, $i \in I$, receive an element at 
$s$, then $f(n,s) > u_k$ since $I \neq \emptyset$ and each $A_i$, $i \in I$, has received an 
element since $u_k$.  If some $A_i$, $i \in I$, does receive an element at stage $s$, then either 
$I_{n,s} = I$ (in which case $f(n,s) = u_0 = f(n)$) or $I_{n,s} \subsetneq I$ (in which case 
$f(n,s) > u_k$ since $A_{i_{n,s}}$ has received an element since stage $u_k$).  Therefore, we 
have established Property 2. 
\end{proof}

We define a computable function $g(n,s)$ from $f(n,s)$ that has one further property.  
We define $g(n,s)$ by induction on $s$, and for each $s$ by induction on $n$.  For every $s$, let $g(0,s) = f(0,s)$.  Assume 
$g(i,t)$ has been defined for all $i \leq n$ and $t\leq s$, and we explain how $g(n+1,s)$ is defined.  Let $k_{n,s}$ be the number of stages $t < s$ for which 
$g(i,t) = g(i,s)$ for all $i \leq n$ and let $m_{n,s} =$ the maximum value of $g(i,s)$ for 
$i \leq n$.  Let $l_{n,s} = k_{n,s} + m_{n,s}$.  Define $g(n+1,s) = f(n+1,l_{n,s})$.  

\begin{lem}
\label{lem:gprop}
The function $g(n,s)$ satisfies the following properties.
\begin{enumerate}
\item $f(n) = \liminf_{s} g(n,s)$.
\item For every $k > f(n)$, there is a stage $s_k$ such that for all $t \geq s_k$ either 
$g(n,t) = f(n)$ or $g(n,t) > k$.
\item For every $n$, there are infinitely many stages $s$ at which $g(i,s) = f(i)$ 
for all $i \leq n$.   
\end{enumerate}
\end{lem}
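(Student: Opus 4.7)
The plan is to prove all three properties simultaneously by induction on $n$. The base case $n = 0$ is essentially free: since $g(0,s) = f(0,s)$, Properties 1 and 2 for $g$ are just Properties 1 and 2 of Lemma \ref{lem:fprop}, and Property 3 at $n = 0$ is the content of Property 1 (the $\liminf$ being attained infinitely often). So the real work is the inductive step: assuming the three properties hold at all levels $i \leq n$, derive them at level $n+1$.

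First I would show Property 3 at level $n+1$. By the inductive Property 3 at level $n$, there are infinitely many "good" stages $s_0 < s_1 < s_2 < \cdots$ at which $g(i,s_j) = f(i)$ for all $i \leq n$. At any such good stage $s_j$, the tuple $(g(0,s_j),\ldots,g(n,s_j))$ equals the fixed tuple $(f(0),\ldots,f(n))$, so all previous good stages contribute to the count $k_{n,s_j}$; hence $k_{n,s_j} \geq j$, and $l_{n,s_j} \geq j + \max(f(0),\ldots,f(n)) \to \infty$. Therefore $\{g(n+1,s_j)\}_j = \{f(n+1,l_{n,s_j})\}_j$ takes the values $f(n+1,\cdot)$ at arbitrarily late stages, and since $\liminf_t f(n+1,t) = f(n+1)$ by Lemma \ref{lem:fprop}, we have $g(n+1,s_j) = f(n+1)$ for infinitely many $j$. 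Those stages witness Property 3 at level $n+1$.

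Next I would establish the key strengthening: $l_{n,s} \to \infty$ as $s \to \infty$ (not just along the good stages). Fix any $M$. By inductive Property 2 at each $i \leq n$, there is a stage $s'$ beyond which every $g(i,s)$ is either equal to $f(i)$ or strictly greater than $M$. For $s \geq s'$, either some coordinate $g(i,s) > M$, in which case $m_{n,s} > M$ and so $l_{n,s} > M$; or else the tuple $(g(0,s),\ldots,g(n,s))$ equals $(f(0),\ldots,f(n))$, in which case the inductive Property 3 guarantees that by taking $s$ large enough the count $k_{n,s}$ exceeds $M$, again yielding $l_{n,s} > M$. So $l_{n,s} \to \infty$.

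From $l_{n,s} \to \infty$, Properties 1 and 2 at level $n+1$ follow by pulling back the corresponding properties of $f(n+1,\cdot)$. For Property 1, since $\liminf_t f(n+1,t) = f(n+1)$ means $f(n+1,t) \geq f(n+1)$ for all sufficiently large $t$, and $l_{n,s} \to \infty$, we get $g(n+1,s) \geq f(n+1)$ for all large $s$; combined with the infinitely-often equality from Property 3, this gives $\liminf_s g(n+1,s) = f(n+1)$. For Property 2, given $k > f(n+1)$, the stage $s_k$ from Lemma \ref{lem:fprop}(2) for $f(n+1,\cdot)$ pulls back along $l_{n,\cdot}$: choose $s_k'$ so that $l_{n,s} \geq s_k$ for all $s \geq s_k'$, and then $g(n+1,s) = f(n+1,l_{n,s}) \in \{f(n+1)\} \cup (k,\infty)$. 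The main obstacle, and the crux of the argument, is the second step above — verifying that $l_{n,s} \to \infty$ unconditionally — because it is precisely here that the extra counter $k_{n,s}$ built into the definition of $g$ does its work, compensating for those recurring "small" patterns at lower levels that Property 2 alone does not rule out.
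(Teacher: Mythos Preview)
Your approach is essentially the paper's: induct on $n$, first get Property 3 at level $n+1$ via the good stages, then show $l_{n,s}\to\infty$ using the dichotomy from inductive Property 2, and pull back Properties 1 and 2 through $l_{n,\cdot}$.

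There is one soft spot in your Property 3 step. From $k_{n,s_j}\geq j$ you deduce only $l_{n,s_j}\to\infty$, and then write ``since $\liminf_t f(n+1,t)=f(n+1)$ \ldots\ we have $g(n+1,s_j)=f(n+1)$ for infinitely many $j$.'' That inference is not valid as stated: a subsequence $l_{n,s_j}\to\infty$ can miss every stage at which $f(n+1,\cdot)$ attains its $\liminf$. What you actually need (and what the paper uses) is that if the $s_j$ enumerate \emph{all} good stages, then $k_{n,s_j}=j$ exactly and $m_{n,s_j}=M:=\max_{i\leq n}f(i)$, so $l_{n,s_j}=M+j$ runs through \emph{every} integer $\geq M$. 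Hence $g(n+1,s_j)=f(n+1,M+j)$ takes on all values of $f(n+1,\cdot)$ from stage $M$ onward, and in particular equals $f(n+1)$ infinitely often. Tighten your $\geq$ to $=$ and the argument goes through; everything else matches the paper.
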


\begin{proof}
We proceed by induction on $n$.  By Lemma \ref{lem:fprop}, these properties hold for 
$n=0$.  Assume these properties hold for $i \leq n$ and we prove them for $g(n+1,s)$.  
Applying Property 3 to $n$, let $u_0 < u_1 < \cdots$ list all the stages at which 
$g(i,s) = f(i)$ for all $i \leq n$.  Let $M =$ the maximum of $f(i)$ for $i \leq n$.  (Because $f$ is nondecreasing, $M$ is really 
just equal to $f(n)$.)  At each stage 
$u_k$, we have $m_{n,u_k} = M$ and $k_{n,u_k} = k$, so by definition $g(n+1,u_k) = f(n+1,M + k)$.  
Therefore, as $k \rightarrow \infty$, $g(n+1,u_k)$ takes on all the values of $f(n+1,t)$ 
for $t > M$.  

Let $t > M$ be a stage for which $f(n+1,t) = f(n+1)$.  (By Property 2 of Lemma \ref{lem:fprop} 
there are infinitely many such stages.)  Let $k = t-M$.  At stage $u_k$, we have 
$g(i,u_k) = f(i)$ for all $i \leq n$ by definition of $u_k$ and we have 
$g(n+1,u_k) = f(n+1,M+k) = f(n+1,t) = f(n+1)$.  Therefore, Property 3 of this lemma 
holds for $n+1$.  

For any $a \in \omega$, let $s_a > u_a$ be a stage such that for every $s > s_a$ and 
every $i \leq n$, either $g(i,s) = f(i)$ or $g(i,s) > a$.  (The existence of $s_a$ follows from 
Property 2 of this lemma applied inductively to $i \leq n$.)  Consider any $s > s_a$.  
By definition, $l_{n,s} = k_{n,s} + m_{n,s}$.  We claim that $l_{n,s} \geq a$.  
There are two cases to consider.  First, suppose 
$g(i,s) = f(i)$ for all $i \leq n$.  In this case, $m_{n,s} = M$ and because $s_a > u_a$, there 
have been at least $a$ many stages $t < s$ for which $g(i,t) = g(i,s) = f(i)$ for all 
$i \leq n$.  Therefore, $k_{n,s} \geq a$, so $l_{n,s} \geq a$.  Second, suppose that for some 
$i \leq n$ we have $g(i,s) \neq f(i)$.  By the choice of $s_a$, $g(i,s) > a$, so $m_{n,s} > a$ and 
$l_{n,s} > a$.  Therefore, in either case $l_{n,s} \geq a$ and so $g(n+1,s) = f(n+1,t)$ for 
some $t \geq a$.  

The previous paragraph established that for all $a$, there is a stage $s_a$ such that for all 
$s > s_a$, there is a $t \geq a$ for which $g(n+1,s) = f(n+1,t)$.  Combining this fact with 
Property 2 of Lemma \ref{lem:fprop} and with the fact that $g(n+1,u_k) = f(n+1)$ for infinitely many 
$u_k$ yields Properties 1 and 2 of this lemma.
\end{proof}

We now put together the last two pieces of our approximating function.  Let $h(n) =$ 
the least stage $s$ for which $K_s[n+1] = K[n+1]$.  Because $h(n)$ is a $\Delta^0_2$ function, it has a computable approximation 
$h(n,s)$ such that $\lim_s h(n,s) = h(n)$.  Finally, let $a(n,s)$ be the computable 
function defined by $a(n,s) = \max \{ g(n,s), h(n,s) \}$.    

\begin{lem}
\label{lem:aprop}
The computable function $a(n,s)$ satisfies the following properties.
\begin{enumerate}
\item $a(n) = \liminf_s a(n,s)$ exists and for all $n$, $a(n) \geq f(n), h(n)$.  
\item For all $n$ and for every $k > a(n)$, there is a stage $s_k$ such that for all $t \geq s_k$, 
either $a(n,t) = a(n)$ or $a(n,t) > k$.
\item For every $n$, there are infinitely many stages $s$ at which $a(i,s) = a(i)$ for all $i \leq n$.
\item For any function $b$ which dominates $a$, $0'' \leq_T b$.  
\end{enumerate}
\end{lem}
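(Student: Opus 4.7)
The plan is to leverage the fact that $h(n,s)$ eventually stabilizes to $h(n)$ so that, for all large $s$, the function $a(n,s)$ reduces to $\max\{g(n,s), h(n)\}$, and then to transfer each of Properties 1--3 from the analogous properties of $g$ established in Lemma \ref{lem:gprop}. Property 4 will then follow by combining Lemma \ref{lem:fcomp} with a direct argument that any function dominating $h$ already computes $0'$.

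For Property 1, I would fix $n$, pick a stage $s_0$ beyond which $h(n,t)=h(n)$, and write $a(n,t) = \max\{g(n,t), h(n)\}$ for all $t \geq s_0$. Since $\liminf_t g(n,t)=f(n)$ by Lemma \ref{lem:gprop}(1), this gives $\liminf_t a(n,t) = \max\{f(n), h(n)\}$, so $a(n)$ exists and automatically satisfies $a(n)\geq f(n)$ and $a(n)\geq h(n)$. For Property 2, fix $k>a(n)$, so that $k>f(n)$ and $k>h(n)$. Choose $s_k$ larger than the stabilization stage for $h(n,\cdot)$ and larger than the stage provided by Lemma \ref{lem:gprop}(2) applied with this $k$. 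For $t\geq s_k$, if $g(n,t)=f(n)$ then $a(n,t)=\max\{f(n),h(n)\}=a(n)$, while if $g(n,t)>k$ then $a(n,t)>k$. Property 3 is similar: given $n$, Lemma \ref{lem:gprop}(3) supplies infinitely many stages $s$ with $g(i,s)=f(i)$ for all $i\leq n$, and once $s$ exceeds the (finite) stabilization stage for all of $h(0,\cdot),\dots,h(n,\cdot)$ we get $a(i,s)=\max\{f(i),h(i)\}=a(i)$ for all $i\leq n$.

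Property 4 is the only place requiring a genuinely new idea. Suppose $b$ dominates $a$. Then, using Property 1, $b$ dominates both $f$ and $h$. Domination of $f$ combined with Lemma \ref{lem:fcomp} yields $0'' \leq_T b\oplus 0'$, so it suffices to show $b \geq_T 0'$. This is immediate from domination of $h$: by definition $h(n)$ is the least stage $s$ with $K_s[n+1]=K[n+1]$, so if $h(n)\leq b(n)$ for all sufficiently large $n$, then the computable procedure ``on input $n$, output $K_{b(n)}[n+1]$'' agrees with $K$ cofinitely and thus gives $K\leq_T b$. Combining, $0''\leq_T b\oplus 0' \leq_T b$.

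No step poses a real obstacle; the only mild point of care is ensuring the stabilization stages for $h(i,\cdot)$ are taken uniformly over $i\leq n$ in the proof of Property 3, but this is fine because finitely many $\Delta^0_2$ approximations stabilize by some common finite stage.
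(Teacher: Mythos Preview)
Your proposal is correct and follows essentially the same approach as the paper's proof, which is quite terse: it simply states that Properties 1--3 follow from Lemma~\ref{lem:gprop} together with $h(n)=\lim_s h(n,s)$, and that Property~4 follows because domination of $a$ implies domination of both $f$ and $h$, with domination of $h$ giving $0'\leq_T b$ and then Lemma~\ref{lem:fcomp} giving $0''\leq_T b$. You have filled in exactly the details the paper leaves implicit.
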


\begin{proof}
Properties 1 through 3 follow from Lemma \ref{lem:gprop} and the fact that $h(n) = \lim_s h(n,s)$.  
Property 4 follows from the fact that if $b$ dominates $a$, then $b$ dominates 
both $h$ and $f$.  The fact that $b$ dominates $h$ gives $0' \leq_T b$.  Combining 
this fact with Lemma \ref{lem:fcomp} gives $0'' \leq_T b$.
\end{proof}

We next define a c.e.~subtree $T \subseteq 2^{< \omega}$ such that the branching levels of 
$T$ dominate the function $a(n)$.  The branching levels are the levels that contain branching nodes.  That is, level $k$ in $T$ is a branching 
level if there is a node $\sigma \in T$ such that $|\sigma| = k$ and both $T(\sigma*0)$ and $T(\sigma*1)$ are infinite.

The basic idea of our construction is as follows.  We make the node $0^{a(0)} \in 2^{< \omega}$ the branching node of least 
length in $T$.  Therefore, we need to insure that both $0^{a(0)}*0$ and $0^{a(0)}*1$ have infinitely many extensions in $T$.  
Since we want the next branching level to be above $a(1)$, we make the next branching nodes equal to 
$0^{a(0)}*0*0^{a(1)}$ and $0^{a(0)}*1*0^{a(1)}$.  To do this, we need to ensure that for each node $\sigma$ of length 2, we have 
that the nodes $0^{a(0)}*\sigma(0)*0^{a(1)}*\sigma(1)$ have infinitely many extensions in $T$.  (Below, we will denote these nodes by 
$\tau_2^{\sigma}$.)  We repeat this process by making the next branching nodes have the form 
$0^{a(0)}*\sigma(0)*0^{a(1)}*\sigma(1)*0^{a(2)}$ for all $|\sigma| = 2$.  In other words, for all strings $\sigma$ of length 3, we need 
to ensure that the nodes $0^{a(0)}*\sigma(0)*0^{a(1)}*\sigma(1)*0^{a(2)}*\sigma(2)$ have infinitely many extensions in $T$.  

By repeating the process in the previous paragraph, the branching levels of $T$ will occur at levels of the form 
$n + \sum_{i=0}^n a(i)$ for $n \in \omega$.  We use the approximation $a(n,s)$ to define a c.e.~tree with these branching 
levels and we use Properties 2 and 3 of Lemma \ref{lem:aprop} to verify that these levels are branching levels and that 
no other levels are branching.  Since $a(n) \leq n + \sum_{i=0}^n a(i)$, we have the required domination property.  We then 
need to show how to extract information about the branching levels from any nontrivial self-embedding of our tree.  

We begin with some notation.  For any $s$, any $n \leq s$ 
and any string $\sigma \in 2^{< \omega}$ with $|\sigma| = n+1$, we define 
\[
\tau_{n,s}^{\sigma} = 0^{a(0,s)} \ast\sigma(0)\ast 0^{a(1,s)}\ast\sigma(1)\ast\cdots \ast
0^{a(n,s)}\ast\sigma(n).
\]
For any $n$ and any string $\sigma \in 2^{< \omega}$ such that $|\sigma| = n+1$, we let 
\[
\tau_n^{\sigma} = 0^{a(0)} \ast \sigma(0) \ast 0^{a(1)} \ast \sigma(1) \ast \cdots \ast 
0^{a(n)} \ast \sigma(n).
\]
For any nonempty string $\alpha \in 2^{< \omega}$, let $\alpha'$ denote the string obtained by 
removing the last element of $\alpha$.  Notice that 
\[
(\tau_{n,s}^{\sigma})' = 0^{a(0,s)} \ast \sigma(0) \ast 0^{a(1,s)} \ast \sigma(1) \ast \cdots 
\ast 0^{a(n,s)}
\]
is a string of length $n+\sum_{i = 0}^{n} a(i,s)$ and that $(\tau_n^{\sigma})'$ is a string of 
length $n+\sum_{i=0}^{n} a(i) $.  

As described above, the goal of our construction is to make each node of the form $(\tau_n^{\sigma})'$ a 
branching node of  our c.e.~tree $T$ and we accomplish this goal by making each node $\tau_n^{\sigma}$ have infinitely many extensions 
on $T$.  Because we cannot effectively know which nodes are of the form $\tau_n^{\sigma}$, we have to use the 
approximations $\tau_{n,s}^{\sigma}$.  At stage $s$, we add at least one new node extending each string of the form 
$\tau_{s,s}^{\sigma}$.  We then verify that in the limit, this process makes the branching nodes of $T$ exactly those nodes 
of the form $(\tau_n^{\sigma})'$.  

Once we have such a tree $T$, we show that from any nontrivial embedding $\delta$ of $T$, we can 
effectively obtain a nontrivial embedding $\iota$ of $T$ and a node $\alpha$ such that there are at 
least $n$ many branching levels below $|\iota^{n+1}(\alpha)|$.  By our calculation of the branching 
levels (described in the previous paragraph), the function $c(n) = |\iota^{n+1}(\alpha)|$ dominates the function 
$a(n)$, and hence by Lemma \ref{lem:aprop}, $c$ computes $0''$.  Because $c$ is obtained effectively 
from $\delta$, we conclude that $0'' \leq_T \delta$.  

The subtree $T \subseteq 2^{< \omega}$ is enumerated in stages as a sequence of finite trees 
$T_0 \subseteq T_1 \subseteq T_2 \subseteq \cdots$.  Set $T_0 = \emptyset$.  
To define $T_{s+1}$, consider each string $\sigma \in 2^{< \omega}$ which has length 
$s+1$.  Let $\alpha_{\sigma}$ be the lexicographically least element of $2^{< \omega}$ which 
extends $\tau_{s,s}^{\sigma}$ and which is not in $T_s$.  Add $\alpha_{\sigma}$ and all of its 
initial segments to $T_s$.  $T_{s+1}$ is the tree formed by adding these strings when 
$\sigma$ ranges over all elements of $2^{< \omega}$ of length $s+1$.  Our desired tree is 
$T = \bigcup_s T_s$.  

\begin{lem}
\label{lem:fact1}
For each $n$ and each $\sigma \in 2^{< \omega}$ of length $n+1$, the node $(\tau_n^{\sigma})'$ 
is a branching node of $T$.
\end{lem}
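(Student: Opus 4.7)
Being a branching node in a subtree of $2^{<\omega}$ amounts to having both immediate $0$- and $1$-extensions present in $T$. So my plan is to prove the slightly stronger statement that for every $\rho\in 2^{<\omega}$ of length $n+1$, the node $\tau_n^\rho$ itself lies in $T$. Applying this to the two strings $\sigma_0,\sigma_1$ of length $n+1$ that agree with $\sigma$ on the first $n$ bits and end in $0$ and $1$ respectively gives $(\tau_n^{\sigma_0})' = (\tau_n^{\sigma_1})' = (\tau_n^\sigma)'$ and $\tau_n^{\sigma_k} = (\tau_n^\sigma)'\ast k$, so both immediate extensions of $(\tau_n^\sigma)'$ appear in $T$.

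To show $\tau_n^\rho\in T$, I would use Property~3 of Lemma~\ref{lem:aprop} to fix any stage $s\geq n+1$ at which $a(i,s)=a(i)$ for all $i\leq n$; for such an $s$, the definition of the approximations gives $\tau_{n,s}^\rho=\tau_n^\rho$. At stage $s+1$ the construction considers every string $\sigma^\ast\in 2^{s+1}$ and inserts the lexicographically least extension of $\tau_{s,s}^{\sigma^\ast}$ into $T$ together with all of its initial segments. Choosing $\sigma^\ast$ to be any length-$(s+1)$ extension of $\rho$, one reads off from the definitions that $\tau_{s,s}^{\sigma^\ast}$ has $\tau_{n,s}^\rho=\tau_n^\rho$ as an initial segment, so $\tau_n^\rho$ is itself placed in $T$ at this stage.

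The only point worth verifying carefully is that $\tau_{s,s}^{\sigma^\ast}$ really extends $\tau_{n,s}^\rho$: the first $n+1$ coordinate bits of $\sigma^\ast$ agree with those of $\rho$ by the choice of $\sigma^\ast$, and the intervening zero-blocks are determined by the common values $a(0,s),\ldots,a(n,s)$, so the prefixes line up exactly. This is really just bookkeeping, with all of the genuine content packed into Property~3 of Lemma~\ref{lem:aprop}, which guarantees the simultaneous stabilization of the approximations $a(0,s),\ldots,a(n,s)$ at cofinally many stages. There is no serious obstacle: the rest of the argument is structural and does not require any further combinatorial input about $T$.
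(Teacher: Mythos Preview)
Your argument has a genuine gap stemming from a misreading of the term ``branching node'' in this section. You write that ``being a branching node in a subtree of $2^{<\omega}$ amounts to having both immediate $0$- and $1$-extensions present in $T$,'' but that is not the notion in play here. In the paragraph introducing branching levels, a level $k$ is branching when there is $\xi\in T$ with $|\xi|=k$ such that both $T(\xi\ast 0)$ and $T(\xi\ast 1)$ are \emph{infinite}; the same reading is used throughout, for instance in the proof of Lemma~\ref{lem:fact2} (where $\xi$ is declared non-branching because $T(\xi\ast 1)$ is finite), in Lemma~\ref{lem:fact5} (no isolated paths), and in Lemma~\ref{lem:fact8+}. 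Your proof establishes only that $(\tau_n^\sigma)'\ast 0$ and $(\tau_n^\sigma)'\ast 1$ lie in $T$, which is strictly weaker than what is required.

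The repair is exactly the paper's argument, and it uses the same ingredient you already invoke. Property~3 of Lemma~\ref{lem:aprop} gives not a single stage but an infinite increasing sequence $u_0<u_1<\cdots$ at which $a(i,u_k)=a(i)$ for all $i\leq n$. At each such stage your prefix computation shows $(\tau_n^\sigma)'\subseteq \tau_{u_k,u_k}^{\xi}$ for every length-$(u_k+1)$ string $\xi$ with $\sigma'\subseteq\xi$, so the construction at stage $u_k+1$ adds a new element above $(\tau_n^\sigma)'\ast 0$ and above $(\tau_n^\sigma)'\ast 1$. Doing this for all $k$ yields the required infinitude of both subtrees. In short, the structural bookkeeping you did is correct; you simply need to run it at infinitely many stages rather than one.
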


\begin{proof}
Let $u_0 < u_1 < \cdots$ be the stages such that $n < u_0$ and $a(i,u_k) = a(i)$ for all $i \leq n$.  For each such 
stage, $(\tau_{n}^{\sigma})' = (\tau_{n,u_k}^{\sigma})'$ and $(\tau_{n,u_k}^{\sigma})' \subseteq \tau_{u_k,u_k}^{\xi}$ for 
all strings $\xi$ such that $\sigma' \subseteq \xi$ and $|\xi| = u_k+1$.  Therefore, both $T_{u_k}((\tau_n^{\sigma})' \ast 0)$ and 
$T_{u_k}((\tau_n^{\sigma})' \ast 1)$ gain extra elements at stage $u_k+1$.  Therefore, these 
trees are infinite and $(\tau_n^{\sigma})'$ is a branching node in $T$.
\end{proof}

\begin{lem}
\label{lem:fact2}
If $\xi$ is a branching node of $T$, then there is an $n$ such that $|\xi| = n+\sum_{i=0}^n a(i)$.
\end{lem}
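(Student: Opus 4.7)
I would prove this by contradiction. Suppose $\xi \in T$ is a branching node with $|\xi|=k$, yet $k\neq L(n) := n+\sum_{i=0}^n a(i)$ for every $n\geq 0$. My aim is to show $T(\xi\ast 1)$ must then be finite, contradicting branching.

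Every node of $T$ extending $\xi\ast 1$ appears as an initial segment of some $\alpha_\sigma$ with $\alpha_\sigma\supseteq \xi\ast 1$. Since each $\alpha_\sigma$ contributes only finitely many nodes, infinitude of $T(\xi\ast 1)$ requires infinitely many stages $s+1$ to produce such an $\alpha_\sigma$. Because $\alpha_\sigma\supseteq \tau_{s,s}^\sigma$, the strings $\tau_{s,s}^\sigma$ and $\xi\ast 1$ are comparable, yielding two cases: (a) $\xi\ast 1\subseteq \tau_{s,s}^\sigma$, or (b) $\tau_{s,s}^\sigma\subsetneq \xi\ast 1$. Case (b) forces $|\tau_{s,s}^\sigma|\leq k$, and since $|\tau_{s,s}^\sigma|\geq s+1$, it can happen only at the finitely many stages $s\leq k-1$.

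The heart of the proof is ruling out (a) for all sufficiently large $s$. In $\tau_{s,s}^\sigma = 0^{a(0,s)}\ast\sigma(0)\ast 0^{a(1,s)}\ast\sigma(1)\ast\cdots$ the bit at position $q$ equals $\sigma(i)$ precisely when $q = p_i(s) := i+1+\sum_{j=0}^i a(j,s)$ and equals $0$ otherwise. Thus $\xi\ast 1\subseteq \tau_{s,s}^\sigma$ forces the bit at position $k+1$ to be $1$, hence $k+1 = p_i(s)$ and $\sigma(i)=1$ for some $i$. Fix $N$ large enough that $L(N) > k+1$, and apply Property 2 of Lemma \ref{lem:aprop} finitely many times to obtain a stage $S$ such that for all $s \geq S$ and all $i \leq N$, either $a(i,s) = a(i)$ or $a(i,s) > k+1$. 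For such $s$, let $i_0 \leq N$ be the least index (if any) with $a(i_0,s) > k+1$; then $p_i(s) = L(i)+1$ for $i < i_0$, while $p_{i_0}(s) > k+1$. A short case analysis shows position $k+1$ lies either in the expanded $0^{a(i_0,s)}$ block, in a stable $0$-block among the first $p_{i_0 - 1}(s)$ bits, or at some $p_i(s) = L(i)+1$ with $i < i_0$. The first two give bit $0$; the last would force $k = L(i)$, contradicting our hypothesis. When no such $i_0$ exists, the same analysis applies with $p_i(s) = L(i)+1$ for all $i \leq N$.

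Hence for $s \geq S$ no $\tau_{s,s}^\sigma$ extends $\xi\ast 1$, so only finitely many $\alpha_\sigma$ do, forcing $T(\xi\ast 1)$ to be finite and contradicting our assumption. The main obstacle is the case analysis in the previous paragraph: one must verify that transient large values of $a(i,s)$ can never realign a $\sigma(i)$-slot exactly at position $k+1$, but instead push the anomaly entirely past $k+1$ -- which is precisely what Property 2 of Lemma \ref{lem:aprop} guarantees, and which is the reason we needed that property rather than mere convergence of the approximation.
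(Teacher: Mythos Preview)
Your proof is correct and follows essentially the same approach as the paper: both argue by contraposition that if $|\xi|$ is not of the form $n+\sum_{i=0}^n a(i)$ then $T(\xi\ast 1)$ is finite, by using Property~2 of Lemma~\ref{lem:aprop} to show that for all sufficiently large $s$ the bit of $\tau_{s,s}^{\sigma}$ immediately following $\xi$ must be $0$. The paper phrases this as ``$\xi$ lies strictly inside a $0$-block of $\tau_{s,s}^\sigma$'' (finding a $j$ with $j+\sum_{i\leq j}a(i,s)<|\xi|<j+1+\sum_{i\leq j+1}a(i,s)$), whereas you phrase it as ``position $k+1$ is never a $\sigma$-slot $p_i(s)$''; these are equivalent formulations of the same computation, and your explicit handling of the short-$\tau$ case (b) is a point the paper leaves implicit.
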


\begin{proof}
Suppose $\xi \in T$ is such that there is an $n$ such that 
\begin{equation}
\label{eq:1}
n+\sum_{i=0}^{n} a(i)  < |\xi| < n+1+ \sum_{i = 0}^{n+1} a(i) .
\end{equation}
By Lemma \ref{lem:aprop}, let $u$ be a stage such that for all $s > u$ and all $i \leq n+1$, 
either $a(i,s) = a(i)$ or $a(i,s) > |\xi|$.  Fix any stage $s > u$.

We claim that there is a $j \leq n$ such that 
\begin{equation}
\label{eq:2}
j+\sum_{i=0}^j a(i,s)  < |\xi| < j+1+\sum_{i=0}^{j+1} a(i,s).
\end{equation}
The proof of the claim breaks into two cases.  If $a(i,s) = a(i)$ for all $i \leq n$, then the claim with $j=n$ 
follows from Equation (\ref{eq:1}).  Otherwise, let $j<n$ be the least number such that 
$a(j+1,s) \neq a(j+1)$.  In this case, 
\[
j+\sum_{i=0}^{j} a(i,s) = j+\sum_{i=0}^{j} a(i) < n+\sum_{i=0}^n a(i) < |\xi|.
\]
Because $a(j+1,s) > |\xi|$, we have $j+1+\sum_{i=0}^{j+1} a(i,s) > |\xi|$ and hence 
Equation \ref{eq:2} holds in this case as well.  

By Equation (\ref{eq:2}), at stage $s$ there is a unique $\sigma \in 2^{< \omega}$ with length 
$j+1$ such that $\tau_{j,s}^{\sigma} \subseteq \xi$.  Furthermore, for $a \in \{ 0,1 \}$ we have 
$\xi \subsetneq (\tau_{j+1,s}^{\sigma \ast a})'$.  That is, 
\[
0^{a(0,s)} \ast \sigma(0) \ast \cdots \ast 0^{a(j,s)} \ast \sigma(j) \subseteq \xi \subsetneq 
0^{a(0,s)} \ast \sigma(0) \ast \cdots \ast 0^{a(j,s)} \ast \sigma(j) \ast 0^{a(j+1,s)}. 
\]
It follows that at stage $s+1$, $T_s(\xi*0)$ gets a new element but $T_s(\xi*1)$ does not.  
Because this property holds for any $s > u$, $T(\xi*1) = T_u(\xi*1)$ is finite, so 
$\xi$ is not a branching node of $T$.  

To finish the proof, we need to show that if $\xi \in T$ and $|\xi| < a(0)$, then $\xi$ is not a 
branching node.  The proof of this fact is similar to (but simpler than) the argument above and we leave 
it to the reader to verify.
\end{proof}

From Lemmas \ref{lem:fact1} and \ref{lem:fact2}, we obtain the following fact.

\begin{lem}
\label{lem:fact3}
The $n^{\text{th}}$ branching level of $T$ is given by the formula $b(n) = n+\sum_{i=0}^{n} a(i) $.
\end{lem}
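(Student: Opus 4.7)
The plan is to combine Lemmas \ref{lem:fact1} and \ref{lem:fact2} and then verify that the resulting set of branching levels is enumerated by $b(n)$ in increasing order.

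First I would read off from the definition of $\tau_n^{\sigma}$ the length of $(\tau_n^{\sigma})'$. Since $\tau_n^{\sigma}$ consists of the $n{+}1$ blocks $0^{a(0)}, 0^{a(1)}, \dots, 0^{a(n)}$ interleaved with the $n{+}1$ symbols $\sigma(0),\dots,\sigma(n)$, its length is $(n{+}1)+\sum_{i=0}^{n}a(i)$, so $(\tau_n^{\sigma})'$ has length exactly $n+\sum_{i=0}^{n}a(i)=b(n)$. By Lemma \ref{lem:fact1}, each $(\tau_n^{\sigma})'$ is a branching node of $T$, so level $b(n)$ is a branching level for every $n\in\omega$.

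Next I would use Lemma \ref{lem:fact2} in the opposite direction: every branching level of $T$ has the form $b(n)$ for some $n$. Putting the two facts together, the set of branching levels of $T$ is exactly $\{b(n):n\in\omega\}$.

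Finally, I would check that $b$ is strictly increasing so that $b(n)$ really is the $n$-th branching level (counted from $n=0$ upward). Since the values $a(i)$ are nonnegative, $b(n+1)-b(n)=1+a(n+1)\geq 1>0$, so $b$ is strictly increasing and enumerates the branching levels in order. There is no serious obstacle here; the entire content of the claim has already been packaged into Lemmas \ref{lem:fact1} and \ref{lem:fact2}, and this final lemma is just the bookkeeping step that reads off the explicit formula for the $n$-th branching level.
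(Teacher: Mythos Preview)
Your proposal is correct and follows exactly the approach the paper takes: the paper simply states that the lemma follows from Lemmas \ref{lem:fact1} and \ref{lem:fact2}, and you have spelled out the bookkeeping (the length computation for $(\tau_n^{\sigma})'$, which the paper records just before the construction, and the strict monotonicity of $b$) that makes this immediate.
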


\begin{lem}
\label{lem:fact4}
If $T(\xi)$ is infinite, then $\xi \subseteq \tau_n^{\sigma}$ for some $n$ and $\sigma$ with 
$|\sigma| = n+1$.
\end{lem}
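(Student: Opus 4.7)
The plan is to induct on $|\xi|$. The base case $|\xi|=0$ is immediate because $\emptyset$ is a prefix of every $\tau_n^\sigma$. For the inductive step with $k=|\xi|\geq 1$, let $\xi'=\xi\restriction(k-1)$; since $T(\xi)\subseteq T(\xi')$ the subtree $T(\xi')$ is also infinite, so by the induction hypothesis $\xi'\subseteq \tau_m^\tau$ for some $m$ and some $\tau$ with $|\tau|=m+1$. Each $\tau_m^\tau$ is a proper prefix of $\tau_{m+1}^{\tau\ast 0}$ (the latter appends the block $0^{a(m+1)}\ast 0$), so by iterating I may assume $|\tau_m^\tau|>k$, making $\xi'$ a strict prefix of $\tau_m^\tau$.

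Next I split on whether $k-1$ is a branching level of $T$; by Lemma \ref{lem:fact3} the branching levels are exactly $b(0),b(1),\dots$. If $k-1=b(j)$ for some $j$, then because $\xi'\subseteq\tau_m^\tau$ with $|\xi'|=b(j)$, the node $\xi'$ coincides with $(\tau_j^\sigma)'$ for some $\sigma$ of length $j+1$ whose first $j$ coordinates are forced by $\tau$ (the last coordinate of $\sigma$ is irrelevant). By Lemma \ref{lem:fact1} this is a branching node, so $\xi=\xi'\ast c$ lies in $T$; setting $\sigma'$ to agree with $\sigma$ on the first $j$ coordinates with $\sigma'(j)=c$ yields $\xi=\tau_j^{\sigma'}$, which is certainly a prefix of itself with $|\sigma'|=j+1$.

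Otherwise $k-1$ is not a branching level, so Lemma \ref{lem:fact2} implies $\xi'$ is not a branching node of $T$; hence exactly one of the two extensions of $\xi'$ has infinite $T$-subtree. But $T(\tau_m^\tau)$ is infinite, since $(\tau_{m+1}^{\tau\ast 0})'$ extends $\tau_m^\tau$ and is itself a branching node by Lemma \ref{lem:fact1}. Therefore $\xi'\ast \tau_m^\tau(k-1)$ is a proper prefix of $\tau_m^\tau$ and lies on an infinite subtree; uniqueness of the infinite extension forces $\xi=\xi'\ast\tau_m^\tau(k-1)\subseteq\tau_m^\tau$, completing the induction.

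The substantive step is the non-branching case, where the issue is to identify which of the two possible extensions of $\xi'$ is the one lying on an infinite subtree. The induction hypothesis supplies a concrete witness in the form of the $\tau_m^\tau$ containing $\xi'$, and Lemma \ref{lem:fact2} (which already does the heavy lifting of localizing branching to the levels $b(n)$) supplies uniqueness. Beyond that, only careful bookkeeping with the convention $|\sigma|=n+1$ for $\tau_n^\sigma$ and the observation that $\tau_m^\tau$ extends to arbitrarily long $\tau_{m'}^{\tau\ast 0^{m'-m}}$ is needed.
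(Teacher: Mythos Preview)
Your proof is correct, and it takes a genuinely different route from the paper's argument. The paper proves the contrapositive directly: assuming $\xi$ is not a prefix of any $\tau_n^\sigma$, it writes $\xi = 0^{a(0)}\ast\alpha(0)\ast\cdots\ast 0^{a(j)}\ast\alpha(j)\ast\mu$ with $j$ maximal, observes that $\mu$ must contain a $1$ before position $a(j+1)$, and then goes back to the stage-by-stage construction (using Property~2 of Lemma~\ref{lem:aprop}) to show that after some stage $u$ no new elements can be enumerated above $\xi$. In other words, the paper reproves finiteness of $T(\xi)$ from scratch by analyzing which strings $\tau_{s,s}^\sigma$ can extend $\xi$.

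Your argument instead inducts on $|\xi|$ and leverages the structural information already captured in Lemmas~\ref{lem:fact1}--\ref{lem:fact3}: at a non-branching level the infinite continuation of $\xi'$ is unique, and since the induction hypothesis hands you a concrete $\tau_m^\tau$ witnessing that one particular continuation is infinite, $\xi$ is forced to agree with it. This is cleaner and avoids revisiting the enumeration of $T$; the price is that your proof is not self-contained but depends on the contrapositive of Lemma~\ref{lem:fact2} (no branching nodes off the levels $b(n)$), whereas the paper's proof of Lemma~\ref{lem:fact4} is logically independent of Lemmas~\ref{lem:fact1}--\ref{lem:fact3}. Both approaches are valid; yours makes better use of what has already been established.
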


\begin{proof}
Suppose that $\xi \not \subseteq \tau_n^{\sigma}$ for any $n$ and $\sigma$.  We show that 
$T(\xi)$ is finite.  First, notice that $\xi$ must contain at least one value of $1$ or else 
$\xi \subseteq \tau_n^{\sigma}$ for sufficiently large $n$ by choosing $\sigma$ to contain all 
zeros.  

Second, notice that if $\xi$ does not have $0^{a(0)}$ as an initial segment, then 
this property follows trivially.  That is, fix a stage $u$ such that for all $s > u$, $a(0,s) \geq a(0)$.   
At any stage $s>u$, we add nodes only above strings $\tau_{s,s}^{\sigma}$ 
and each string $\tau_{s,s}^{\sigma}$ begins with $0^{a(0,s)}$.  Because this string is not an initial 
segment of $\xi$, $T_s(\xi)$ does not get a new element at stage $s+1$.  Therefore, 
$T_u(\xi) = T(\xi)$ and hence $T(\xi)$ is finite.  

It remains to consider the case when $0^{a(0)}$ is an initial segment of $\xi$ and $\xi$ contains 
at least one value of $1$. Let $j$ be the largest value such that there is are strings $\alpha$ 
(with $|\alpha| = j+1$) and $\mu$ such that 
\[
\xi = 0^{a(0)} \ast \alpha(0) \ast \cdots \ast 0^{a(j)}\ast \alpha(j) \ast \mu.
\]
Fix such $j$, $\alpha$ and $\mu$.  Because $\xi \not \subseteq \tau_n^{\sigma}$ for any $n$ and 
$\sigma$, the string $\mu$ must 
contain at least one value of $1$.  Write $\mu = \mu_0 \ast 1 \ast \mu_1$ where $\mu_0$ is such 
that $\mu_0(k) = 0$ for all $k < |\mu_0|$.  Because $j$ is chosen maximal, $|\mu_0| < a(j+1)$.  

Let $u$ be a stage such that for all $s > u$ and for all $i \leq j+1$, $a(i,s) = a(i)$ or $a(i,s) > |\xi|$.  
The lemma follows from the claim that $T_u(\xi) = T(\xi)$.  To prove this claim, fix any $s > u$ and we 
show that $T_s(\xi)$ does not gain a new element at stage $s+1$.  We split into two cases.  
First, suppose that for all $i \leq j$, $a(i,s) = a(i)$.  The only way for $T_s(\xi)$ to gain a new element 
at stage $s+1$ is if there is a string $\sigma$ of length $s+1$ 
such that $\xi \subseteq \tau_{s,s}^{\sigma}$.  
Because $a(i,s) = a(i)$ for $i \leq j$, this string $\sigma$ must satisfy $\sigma(i) = \alpha(i)$ for 
all $i \leq j$.  It follows that $0^{a(0)} \ast \alpha(0) \ast \cdots \ast 0^{a(j)} \ast \alpha(j) 
\ast 0^{a(j+1,s)}$ is an initial segment of $\tau_{s,s}^{\sigma}$.  However, regardless of whether 
$a(j+1,s) = a(j)$ or not, we have $|\mu_0| < a(j+1,s)$.  Hence the strings 
$0^{a(0)} \ast \alpha(0) \ast \cdots \ast 0^{a(j)} \ast \alpha(j) \ast 0^{a(j+1,s)}$ and $\xi$ are incomparable.  (The point is that 
$\xi$ contains the value 1 right after $\mu_0$ while the other string has value 0 in this position.)  
Therefore, $T_s(\xi)$ does not get a new element in this case.

The other case is when there is an $i < j$ for which $a(i,s) \neq a(i)$.  Let $k$ denote the 
least such $i$.  The argument is similar.  
$T_s(\xi)$ can gain a new element only if there is a $\sigma$ such that $\xi \subseteq 
\tau_{s,s}^{\sigma}$.  Because $a(i,s) = a(i)$ for all $i < k$, we have $\sigma(i) = \alpha(i)$ 
for $i < k$ and hence $0^{a(0)} \ast \alpha(0) \ast \cdots \ast 0^{a(k-1)} \ast \alpha(k-1) \ast 
0^{a(k,s)}$ is an initial segment of $\tau_{s,s}^{\sigma}$.  Because $a(k,s) > |\xi|$, this string is incomparable 
with $\xi$ and hence $T_s(\xi)$ does not get a new element in this case.
\end{proof}

\begin{lem}
\label{lem:fact5}
The tree $T$ has no isolated paths.  
\end{lem}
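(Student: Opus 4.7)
The plan is to show directly that every node $\xi \in T$ with $T(\xi)$ infinite has, strictly above it, a branching node whose two immediate successors each lie in an infinite subtree of $T$. If this is established, then no node in $T$ can lie on a unique path, and consequently no path is isolated.

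First I would invoke Lemma~\ref{lem:fact4}: given $\xi \in T$ with $T(\xi)$ infinite, there exist $n$ and $\sigma \in 2^{<\omega}$ with $|\sigma| = n+1$ such that $\xi \subseteq \tau_n^\sigma$. The key observation is that $\tau_n^\sigma$ is, by definition, an initial segment of $(\tau_{n+1}^{\sigma * a})'$ for either choice of $a \in \{0,1\}$, since
\[
(\tau_{n+1}^{\sigma * a})' = 0^{a(0)}\ast\sigma(0)\ast\cdots\ast 0^{a(n)}\ast\sigma(n)\ast 0^{a(n+1)}.
\]
Thus $\xi \subseteq \tau_n^\sigma \subseteq (\tau_{n+1}^{\sigma*0})'$, so it suffices to exhibit a branching node above $\xi$ in $T$ with both branches infinite.

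By Lemma~\ref{lem:fact1}, the node $\eta = (\tau_{n+1}^{\sigma*0})'$ is a branching node of $T$, and moreover the proof of that lemma shows that both $T(\eta \ast 0)$ and $T(\eta \ast 1)$ are infinite (they gain new extensions at every stage of the form $u_k$ where the approximation $a(i,u_k)$ equals $a(i)$ for all $i \leq n+1$). Hence there are at least two paths through $\xi$ in $T$, and since $\xi$ was an arbitrary node lying on some path of $T$, no path can be isolated.

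The argument is essentially a direct consequence of the structural lemmas already proved about the branching levels of $T$; I do not anticipate any genuine obstacle. The only mildly subtle point is making sure that the quoted ``both branches infinite'' claim is extracted from Lemma~\ref{lem:fact1} rather than merely the statement that $\eta$ is a branching node, but this is immediate from the construction used there.
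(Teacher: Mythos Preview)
Your proposal is correct and is exactly the argument the paper has in mind: the paper's proof is the single sentence ``This lemma follows immediately from Lemmas~\ref{lem:fact4} and~\ref{lem:fact1},'' and you have simply spelled out how those two lemmas combine (via $\xi \subseteq \tau_n^{\sigma} \subseteq (\tau_{n+1}^{\sigma*0})'$) to yield a branching node above every infinite $\xi$. Your worry about extracting ``both branches infinite'' from Lemma~\ref{lem:fact1} is unnecessary, since in this section that is precisely what ``branching node'' means and is explicitly shown in the proof of that lemma.
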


\begin{proof}
This lemma follows immediately from Lemmas \ref{lem:fact4} and \ref{lem:fact1}.  
\end{proof}

\begin{lem}
\label{lem:fact6}
If $\delta:T \rightarrow T$ is a nontrivial self-embedding, then there is a string $\xi$ such that 
$|\delta(\xi)| > |\xi|$.
\end{lem}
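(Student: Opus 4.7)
The plan is to prove the contrapositive: suppose that $|\delta(\xi)| \leq |\xi|$ for every $\xi \in T$, and derive that $\delta$ must be onto, contradicting nontriviality.

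The first step is to establish the lower bound $|\delta(\xi)| \geq |\xi|$ for every $\xi \in T$, which holds for any embedding of a subtree of $2^{<\omega}$ into itself. The argument is elementary: for $\xi \in T$ with $|\xi| = k$, the initial segments $\emptyset = \xi \restriction 0 \subsetneq \xi \restriction 1 \subsetneq \cdots \subsetneq \xi \restriction k = \xi$ form a chain of $k+1$ distinct elements of $T$. Since $\delta$ is injective and preserves $\subseteq$, the images $\delta(\xi \restriction 0), \delta(\xi \restriction 1), \ldots, \delta(\xi \restriction k)$ form a chain of $k+1$ distinct elements in $2^{<\omega}$, so its maximum $\delta(\xi)$ must have length at least $k$.

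Combining this lower bound with the assumption $|\delta(\xi)| \leq |\xi|$ yields $|\delta(\xi)| = |\xi|$ for all $\xi \in T$. Writing $L_n = \{\xi \in T : |\xi| = n\}$ for the $n$-th level of $T$, this means $\delta(L_n) \subseteq L_n$ for every $n$. But $L_n \subseteq 2^n$ is finite, and $\delta$ restricted to $L_n$ is injective, so $\delta \restriction L_n$ must be a bijection from $L_n$ onto $L_n$.

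Since this holds for every $n$ and $T = \bigcup_n L_n$, $\delta$ is surjective onto $T$. This contradicts the hypothesis that $\delta$ is nontrivial (not onto). Therefore there must exist some $\xi \in T$ with $|\delta(\xi)| > |\xi|$. There is no real obstacle here; the only thing to be careful about is phrasing the chain-length bound cleanly, and noting that nontriviality for the self-embedding $\delta$ means non-surjectivity (as specified in the introduction's definition of nontrivial).
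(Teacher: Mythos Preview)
Your proof is correct and follows essentially the same approach as the paper's: assume $|\delta(\xi)| \leq |\xi|$ for all $\xi$, combine with the lower bound $|\delta(\xi)| \geq |\xi|$ to get that $\delta$ permutes each finite level $L_n$, and conclude $\delta$ is onto. The only difference is that you spell out the chain argument for the lower bound explicitly, whereas the paper simply asserts $|\xi| \leq |\delta(\xi)|$ for any self-embedding.
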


\begin{proof}
Suppose there is no such string $\xi$.  Because $|\xi| \leq |\delta(\xi)|$ for any self-embedding 
$\delta$ and because $T$ is binary branching, it follows that for each $n$, $\delta$ restricted to the strings of length $n$ in $T$ 
is a permutation.  Therefore $\delta$ is onto and hence is not nontrivial. 
\end{proof}

\begin{lem}
\label{lem:fact7}
If $\delta:T \rightarrow T$ is a nontrivial self-embedding then there is a $k \in \omega$ and 
a node $\xi$ such that $\xi \subsetneq \delta^k(\xi)$.  
\end{lem}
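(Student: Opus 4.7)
The plan is to apply Lemma \ref{lem:fact6} to obtain $\eta \in T$ with $|\delta(\eta)| > |\eta|$, and then to argue by pigeonhole on the sequence $\alpha_k := \delta^k(\eta) \upharpoonright |\eta|$. Since $T$ is binary branching, $T \cap 2^{|\eta|}$ is finite, so $(\alpha_k)$ must be eventually periodic. The central observation is that whenever $\alpha_j = \alpha_k$ for some $j < k$, we automatically have $\alpha_j \subseteq \delta^{k-j}(\alpha_j)$: indeed $\alpha_j \subseteq \delta^j(\eta)$ gives $\delta^{k-j}(\alpha_j) \subseteq \delta^k(\eta)$, and since $|\delta^{k-j}(\alpha_j)| \geq |\eta|$, the initial segment $\alpha_k$ of $\delta^k(\eta)$ of length $|\eta|$ must sit inside $\delta^{k-j}(\alpha_j)$.

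If for some repeat this containment is strict, taking $\xi = \alpha_j$ and $k' = k-j$ finishes the proof. Otherwise every repeat yields equality, so the first periodic element $\beta = \alpha_{k_1}$ is a $\delta^p$-fixed point (where $p$ is the period). I would then split into two subcases. If $\eta$ itself is already in the cycle (so $\alpha_p = \eta$), then $\eta \subseteq \delta^p(\eta)$, and strictness follows from $|\delta^p(\eta)| \geq |\delta(\eta)| > |\eta|$. Otherwise $k_1 \geq 1$, and $\gamma := \delta^{k_1}(\eta)$ is a proper extension of $\beta$. The map $\delta^p$ restricts to a self-embedding of $T(\beta)$, and this restriction is still nontrivial: if it were onto, a standard argument shows it would be level-preserving (an onto, injective, length-nondecreasing map on a binary branching subtree must permute each level), so $\gamma$ would cycle under $\delta^p$ at level $|\gamma|$, forcing $\delta^{pm}(\eta) = \eta$ for some $m \geq 1$ by injectivity, contradicting $|\delta^{pm}(\eta)| \geq |\delta(\eta)| > |\eta|$.

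The main obstacle I anticipate is terminating the reduction: iterating the pigeonhole inside $T(\beta)$ with the embedding $\delta^p$ yields either success or a new, strictly longer periodic element, and an infinite iteration would produce a chain $\beta_0 \subsetneq \beta_1 \subsetneq \cdots$ of fixed points of successively higher powers of $\delta$. The plan to close the argument is to show that such an infinite chain would induce an isolated path in $T$ — or at least force a structural contradiction against Lemma \ref{lem:fact5} combined with Lemma \ref{lem:fact4}'s characterization of which nodes of $T$ have infinite subtrees.
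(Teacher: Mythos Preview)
Your pigeonhole setup via $\alpha_k = \delta^k(\eta)\upharpoonright|\eta|$ is essentially the paper's sequence (they write $\mu_k$ for it), and the central observation that $\alpha_j = \alpha_k$ forces $\alpha_j \subseteq \delta^{k-j}(\alpha_j)$ is correct and is the heart of the argument. The divergence from the paper is in how you handle the possibility of equality, and there your proposal has a genuine gap: the termination plan via Lemmas~\ref{lem:fact4} and~\ref{lem:fact5} does not work. An infinite chain $\beta_0 \subsetneq \beta_1 \subsetneq \cdots$ of fixed points of increasing powers of $\delta$ does determine a path $P$ in $T$, but nothing in your setup forces $P$ to be isolated --- Lemma~\ref{lem:fact5} only tells you it is \emph{not} isolated, which is no contradiction. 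Lemma~\ref{lem:fact4} gives no traction either: each $\beta_i$ lies below some $\tau_n^{\sigma}$, but you have no control over how these interact across $i$, and in any case the statement of Lemma~\ref{lem:fact7} should (and in the paper's proof does) hold for an arbitrary binary branching tree, so an argument that leans on the specific combinatorics of this $T$ is suspicious.

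The paper avoids recursion entirely by showing that at the \emph{first} repeat the containment is already strict. Taking $\mu_{n+1} = \mu_i$ as the first repeat (so $\mu_0,\ldots,\mu_n$ are pairwise distinct nodes at level $|\eta|$) and $l = n+1-i$, one argues $\mu_i \subsetneq \delta^l(\mu_i)$ by a short case split. The case $i=0$ is exactly your ``$\eta$ in the cycle'' subcase. For $i \geq 1$, suppose for contradiction $\mu_i = \delta^l(\mu_i)$. Since $\mu_{m+1} \subseteq \delta(\mu_m)$ for each $m$, one has the telescoping chain
\[
\mu_i = \mu_{n+1} \subseteq \delta(\mu_n) \subseteq \delta^2(\mu_{n-1}) \subseteq \cdots \subseteq \delta^l(\mu_i) = \mu_i,
\]
forcing $\delta(\mu_n) = \mu_i$. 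But also $\mu_i \subseteq \delta(\mu_{i-1})$ by definition, so $\delta(\mu_n) \subseteq \delta(\mu_{i-1})$; yet $\mu_n$ and $\mu_{i-1}$ are distinct nodes at the same level (here one uses $i-1 \neq n$, which holds when $l > 1$; the case $l = 1$ is handled by the same incomparability trick applied one step earlier), hence incomparable, contradicting that $\delta$ is an embedding. This collapse-of-the-chain argument is the idea your proposal is missing, and it removes the need for any recursion on subtrees.
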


\begin{proof}
By Lemma \ref{lem:fact6}, let $\mu_0$ be a node such that $|\mu_0| < |\delta(\mu_0)|$.  
If $\mu_0 \subseteq \delta(\mu_0)$ then $\mu_0 \subsetneq \delta(\mu_0)$ and we can 
let $\xi = \mu_0$ and $k=1$ to verify the lemma.  Otherwise, assume that $\mu_0 \not 
\subseteq \delta(\mu_0)$.  Let $\mu_1$ be such that $|\mu_1| = |\mu_0|$ and 
$\mu_1 \subseteq \delta(\mu_0)$.  Notice that $\mu_1 \neq \mu_0$ and 
$\delta(\mu_0) \neq \mu_0$.  

We proceed by induction.   Assume that $n \geq 1$ and we have defined a sequence of 
pairwise distinct nodes 
$\mu_0, \mu_1, \ldots, \mu_n$ such that $|\mu_i| = |\mu_0|$ for all $i \leq n$ and 
$\mu_{i+1} \subseteq \delta(\mu_i)$ and $\mu_i \neq \delta(\mu_i)$ for all $i < n$.  (The last 
two sentences of the previous paragraph establish the required properties when $n=1$.)  

We claim that in this situation, $\mu_n \neq \delta(\mu_n)$.  Suppose that $\mu_n = 
\delta(\mu_n)$.  Because $|\mu_n| = |\mu_{n-1}|$ and $\mu_n \neq \mu_{n-1}$, 
$\mu_n$ and $\mu_{n-1}$ are incomparable nodes.  However, $\delta(\mu_n) = \mu_n 
\subseteq \delta(\mu_{n-1})$.  Therefore $\delta(\mu_n)$ and $\delta(\mu_{n-1})$ are comparable 
contradicting the fact that $\delta$ is a self-embedding.  

Next, we let $\mu_{n+1}$ be such that $|\mu_{n+1}| = |\mu_0|$ and $\mu_{n+1} \subseteq 
\delta(\mu_n)$.  We claim that if $\mu_{n+1} = \mu_i$ for some $i \leq n$, then the conclusion of 
the lemma is true.  Otherwise, if $\mu_{n+1} \neq \mu_i$ for all $i \leq n$, then we add $\mu_{n+1}$ to the list of pairwise distinct 
nodes above and continue by induction.  
Because there are only finitely many nodes at level $|\mu_0|$, we must eventually find an $n$ such 
that $\mu_{n+1} = \mu_i$ for some $i \leq n$.  Hence, the lemma follows from the claim in this 
paragraph.  

Suppose that $\mu_{n+1} = \mu_i$ for some $i \leq n$ and let 
$l \geq 1$ be such that $i = (n+1) - l$.  In this 
situation we have $\mu_i = \mu_{n+1} \subseteq \delta^l(\mu_i)$.  We claim that 
$\mu_i \neq \delta^l(\mu_i)$ (and hence we have established the lemma with $\xi = \mu_i$ 
and $k = l$).  We break into three cases.  

\begin{description}

\item[Case 1.] $i=0$.  In this case, we have $\mu_0 \subseteq 
\delta^l(\mu_0)$.  But, $|\mu_0| < |\delta(\mu_0)|$ implies $|\mu_0| < |\delta^l(\mu_0)|$ 
so we have $\mu_0 \subsetneq \delta^l(\mu_0)$ as required.

\item[Case 2.] $l=1$.  In this case, we have $\mu_i \subseteq \delta(\mu_i)$.  Because $i \leq n$, we know 
$\mu_i \neq \delta(\mu_i)$, so $\mu_i \subsetneq \delta(\mu_i)$ as required.

\item[Case 3.]  $i > 0$ and $l > 1$.  For a contradiction, assume that 
$\mu_i = \delta^l(\mu_i)$.  We have $\mu_i = \mu_{n+1} \subseteq \delta(\mu_n)$ and 
$\mu_i \subseteq \delta(\mu_{i-1})$.  By our induction hypothesis, $\mu_n$ and $\mu_{i-1}$ 
are incomparable nodes.  Furthermore, we have 
\[
\mu_i = \mu_{n+1} \subseteq \delta(\mu_n) \subseteq \delta^2(\mu_{n-1}) \subseteq 
\cdots \subseteq \delta^l(\mu_i) = \mu_i.
\]
Therefore, $\delta(\mu_n) = \mu_i$ so $\delta(\mu_n)$ and $\delta(\mu_{i-1})$ are comparable 
nodes, violating the fact that $\delta$ is a self-embedding.

\end{description}
\end{proof}

For any nontrivial self-embedding $\delta:T \rightarrow T$, we can fix $k$ and $\xi$ as in 
Lemma \ref{lem:fact7} and let $\gamma = \delta^k:T \rightarrow T$.  $\gamma$ is a nontrivial 
self-embedding of $T$ such that $\xi \subsetneq \gamma(\xi) \subsetneq \gamma^2(\xi) \subsetneq 
\cdots$.  It will also be useful to consider the nontrivial self-embedding $\iota:T \rightarrow T$ 
given by $\iota = \gamma^2$.  Notice that both $\iota$ and $\gamma$ are obtained from 
$\delta$ by finitely many parameters.

\begin{lem}
\label{lem:fact8}
Let $\delta$ be any nontrivial self-embedding $\delta:T \rightarrow T$ and let $\gamma$ and 
$\iota$ be defined from $\delta$ as above.  There are nodes $\alpha$ and $\beta_0$ such that 
$\alpha \subsetneq \beta_0 \subsetneq \iota(\alpha)$ and $\beta_0$ is a branching node.
\end{lem}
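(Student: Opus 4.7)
The plan is to argue by contradiction: assuming no such $(\alpha, \beta_0)$ exists, I will show that $T$ has no branching levels strictly greater than $|\xi|$, contradicting Lemma \ref{lem:fact3}. For each $m \geq 0$, I will apply the contradiction hypothesis to $\alpha_m := \gamma^m(\xi)$. Since $\iota = \gamma^2$ gives $\iota(\alpha_m) = \gamma^{m+2}(\xi)$, and the chain $\xi \subsetneq \gamma(\xi) \subsetneq \gamma^2(\xi) \subsetneq \cdots$ established just before the lemma yields $\gamma^m(\xi) \subsetneq \gamma^{m+1}(\xi) \subsetneq \gamma^{m+2}(\xi)$ (so in particular $\alpha_m \subsetneq \iota(\alpha_m)$), the hypothesis forbids any branching node $\beta_0$ of $T$ satisfying $\gamma^m(\xi) \subsetneq \beta_0 \subsetneq \gamma^{m+2}(\xi)$.

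The key reduction is that any such candidate $\beta_0$ is an initial segment of $\gamma^{m+2}(\xi)$, and since $\gamma^{m+2}(\xi)$ itself is extended by the infinite chain $\gamma^{m+2}(\xi) \subsetneq \gamma^{m+3}(\xi) \subsetneq \cdots$, every initial segment of $\gamma^{m+2}(\xi)$ has infinite subtree in $T$. By Lemmas \ref{lem:fact1}, \ref{lem:fact2}, and \ref{lem:fact4}, a node of $T$ with infinite subtree is a branching node if and only if its length is a branching level. Hence the hypothesis applied at $\alpha_m$ says: no branching level lies in the open integer interval $I_m := (|\gamma^m(\xi)|, |\gamma^{m+2}(\xi)|)$.

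Next I will show that $\bigcup_{m \geq 0} I_m$ contains every integer $n > |\xi|$. Since the sequence $|\gamma^k(\xi)|$ is strictly increasing, hence unbounded, there is a least $k \geq 1$ with $n \leq |\gamma^k(\xi)|$; minimality forces $|\gamma^{k-1}(\xi)| < n$, and then $|\gamma^{k-1}(\xi)| < n \leq |\gamma^k(\xi)| < |\gamma^{k+1}(\xi)|$ gives $n \in I_{k-1}$. Combining with the previous paragraph, no branching level of $T$ exceeds $|\xi|$, which contradicts Lemma \ref{lem:fact3} since the branching levels $b(0) < b(1) < \cdots$ are unbounded.

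No single step here is especially difficult; the key idea is the choice of test nodes $\alpha_m = \gamma^m(\xi)$. This exploits both the strictly increasing chain from Lemma \ref{lem:fact7} and the doubling $\iota = \gamma^2$: the doubling is exactly what makes the open intervals $I_m$ overlap enough to cover all integers above $|\xi|$, which would fail if we could only apply the hypothesis at a single $\alpha$.
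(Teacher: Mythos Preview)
Your proof is correct, but it takes a different route from the paper's. The paper gives a short direct argument: the chain $\xi \subsetneq \gamma(\xi) \subsetneq \gamma^2(\xi) \subsetneq \cdots$ determines a path in $T$, and since $T$ has no isolated paths (Lemma~\ref{lem:fact5}), this path must contain a branching node $\beta_0$ with $\gamma(\xi) \subseteq \beta_0$. Choosing $j \geq 1$ with $\gamma^j(\xi) \subseteq \beta_0 \subsetneq \gamma^{j+1}(\xi)$ and setting $\alpha = \gamma^{j-1}(\xi)$ finishes immediately, since $\iota(\alpha) = \gamma^{j+1}(\xi)$.

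Your argument replaces the single appeal to Lemma~\ref{lem:fact5} with the finer structural Lemmas~\ref{lem:fact1}, \ref{lem:fact2}, and \ref{lem:fact4}: you observe that any initial segment of $\gamma^{m+2}(\xi)$ has infinite subtree, so it is a branching node exactly when its length is a branching level, and then you derive a contradiction by showing the overlapping open intervals $(|\gamma^m(\xi)|, |\gamma^{m+2}(\xi)|)$ cover all integers above $|\xi|$ while containing no branching level. This is sound, and the reduction ``infinite subtree $+$ length a branching level $\Rightarrow$ branching node'' is a correct reading of those lemmas. The paper's route is shorter because Lemma~\ref{lem:fact5} already packages exactly the consequence you need; your version, on the other hand, makes explicit that \emph{every} interval $(\gamma^m(\xi), \gamma^{m+2}(\xi))$ contains a branching node, which is morally the content iterated in Lemma~\ref{lem:fact8+}.
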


\begin{proof}
Fix $\xi$ as in the paragraph before this lemma.  Because $\xi, \gamma(\xi), \gamma^2(\xi), 
\ldots$ traces out a path in $T$ and because $T$ has no isolated paths, there must be a 
$j \geq 1$ and a branching node $\beta_0$ such that $\gamma^j(\xi) \subseteq \beta_0 
\subsetneq \gamma^{j+1}(\xi)$.  Let $\alpha = \gamma^{j-1}(\xi)$.  Because 
$\alpha = \gamma^{j-1}(\xi) \subsetneq \gamma^j(\xi) \subseteq \beta_0$, we have 
$\alpha \subsetneq \beta_0$.  Because $\iota = \gamma^2$, we have $\beta_0 \subsetneq 
\gamma^{j+1}(\xi) = \gamma^2(\gamma^{j-1}(\xi)) = \iota(\alpha)$.  
\end{proof}

\begin{lem}
\label{lem:fact8+}
Let $\iota:T \rightarrow T$ be a nontrivial self-embedding for which there are nodes 
$\alpha$ and $\beta_0$ such that $\alpha \subsetneq \beta_0 \subsetneq \iota(\alpha)$ 
and $\beta_0$ is a branching node.  Then there is a branching node $\beta_1$ such that 
$\iota(\alpha) \subsetneq \beta_1 \subsetneq \iota^2(\alpha)$.  
\end{lem}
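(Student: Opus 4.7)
The plan is to take $\beta_1 := \iota(\beta_0)$ and verify both required properties.

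The chain of strict inclusions is essentially automatic. The hypothesis $\alpha \subsetneq \beta_0 \subsetneq \iota(\alpha)$ translates, in the tree order on $T \subseteq 2^{<\omega}$, to $\alpha \prec \beta_0 \prec \iota(\alpha)$ with both comparisons strict. Since $\iota$ is an embedding, it preserves the tree order and is injective, so applying it to each inclusion gives $\iota(\alpha) \prec \iota(\beta_0) \prec \iota^2(\alpha)$, which translates back to $\iota(\alpha) \subsetneq \beta_1 \subsetneq \iota^2(\alpha)$.

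The real content is showing that $\beta_1 = \iota(\beta_0)$ is a branching node of $T$. Since $\beta_0$ is a branching node, both $T(\beta_0 * 0)$ and $T(\beta_0 * 1)$ are infinite. The map $\iota$ is injective, so $\iota[T(\beta_0 * 0)]$ and $\iota[T(\beta_0 * 1)]$ are infinite subsets of $T$, and because $\iota$ is order-preserving, the first is contained in $T(\iota(\beta_0 * 0))$ and the second in $T(\iota(\beta_0 * 1))$. Moreover $\beta_0 * 0$ and $\beta_0 * 1$ are incomparable in $T$, and since $\iota$ reflects the order their images $\iota(\beta_0 * 0)$ and $\iota(\beta_0 * 1)$ are incomparable too; yet both strictly extend $\iota(\beta_0) = \beta_1$. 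In the binary tree $2^{<\omega}$ this forces one of them to extend $\beta_1 * 0$ and the other to extend $\beta_1 * 1$. Hence each of $T(\beta_1 * 0)$ and $T(\beta_1 * 1)$ contains an infinite set and is therefore infinite, making $\beta_1$ a branching node.

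The main point to keep in mind is that in a general tree, the image of a branching node under an embedding need not be a branching node; the argument above leverages in an essential way both the binary-branching structure of $2^{<\omega}$ (so that two incomparable extensions of $\beta_1$ must sit on opposite immediate children) and the fact that $\iota$ reflects incomparability. Beyond that, no additional machinery or case analysis is needed, and the lemma follows in a few lines.
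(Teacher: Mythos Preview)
Your argument has a genuine gap. The claim ``In the binary tree $2^{<\omega}$ this forces one of them to extend $\beta_1 * 0$ and the other to extend $\beta_1 * 1$'' is false: two incomparable proper extensions of $\beta_1$ can both extend the \emph{same} immediate child of $\beta_1$ and split only later. For instance, $\beta_1*0*0$ and $\beta_1*0*1$ are incomparable extensions of $\beta_1$, yet both lie above $\beta_1*0$. Nothing prevents $\iota$ from sending both $\beta_0*0$ and $\beta_0*1$ above $\beta_1*0$, in which case $T(\beta_1*1)$ could well be finite and $\beta_1=\iota(\beta_0)$ would fail to be a branching node. So the choice $\beta_1=\iota(\beta_0)$ does not work in general.

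The fix, which is what the paper does, is to take $\beta_1$ to be the infimum (longest common initial segment) of $\iota(\beta_0*0)$ and $\iota(\beta_0*1)$. At that node the two images genuinely diverge, so one extends $\beta_1*0$ and the other $\beta_1*1$, and your infiniteness argument then goes through. The containment $\iota(\alpha)\subsetneq\beta_1$ still holds because $\iota(\alpha)\subsetneq\iota(\beta_0)\subseteq\beta_1$, and $\beta_1\subsetneq\iota^2(\alpha)$ follows by picking $i_0$ with $\beta_0*i_0\subseteq\iota(\alpha)$ and noting $\beta_1\subsetneq\iota(\beta_0*i_0)\subseteq\iota^2(\alpha)$.
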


\begin{proof}
Fix $\alpha$ and $\beta_0$.  Because $\alpha \subsetneq \beta_0 \subsetneq \iota(\alpha)$, 
we have 
\begin{gather*}
\alpha \subsetneq \beta_0 \subsetneq \iota(\alpha) \subsetneq \iota(\beta_0) \subsetneq 
\iota(\beta_0 \ast 0) \\ 
\alpha \subsetneq \beta_0 \subsetneq \iota(\alpha) \subsetneq \iota(\beta_0) \subsetneq 
\iota(\beta_0 \ast 1).
\end{gather*}  
Let $\beta_1$ be the infimum of $\iota(\beta_0 \ast 0)$ and $\iota(\beta_0 \ast 1)$.  Because 
these two nodes are incomparable, $\beta_1$ is strictly contained in both of them.  From 
the offset containments above, it is clear than $\beta_0 \subsetneq \iota(\alpha) \subsetneq \beta_1$.  
Let $i_0 \in \{ 0,1 \}$ be such that $\beta_0 \ast i_0 \subseteq \iota(\alpha)$.  Because 
$\iota(\beta_0 \ast i_0) \subseteq \iota^2(\alpha)$ and $\beta_1 \subsetneq \iota(\beta_0 
\ast i_0)$ we have $\beta_1 \subsetneq \iota^2(\alpha)$.

Finally, because $\beta_0$ is a branching node, both $T(\iota(\beta_0*0))$ and $T(\iota(\beta_0*1))$ 
are infinite.  Therefore, the infimum of $\iota(\beta_0*0)$ and $\iota(\beta_0*1)$ (which is $\beta_1$) is a branching node.  
\end{proof}

\begin{lem}
\label{lem:fact9}
Let $\delta:T \rightarrow T$ be any nontrivial self-embedding.  There is a nontrivial self-embedding 
$\iota:T \rightarrow T$ (defined from $\delta$ together with finitely many parameters) and a node $\alpha$ such that 
the sequence $c(n) = |\iota^{n+1}(\alpha)|$ dominates the branching level function $b(n)$ of $T$ (see Lemma \ref{lem:fact3}).   
\end{lem}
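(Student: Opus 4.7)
The plan is to combine Lemmas \ref{lem:fact7}, \ref{lem:fact8}, and \ref{lem:fact8+} with a straightforward induction. Given the nontrivial self-embedding $\delta$, I first apply Lemma \ref{lem:fact7} to obtain $k$ and $\xi$ with $\xi \subsetneq \delta^k(\xi)$, and as in the discussion preceding Lemma \ref{lem:fact8} I define $\gamma = \delta^k$ and $\iota = \gamma^2 = \delta^{2k}$. Then Lemma \ref{lem:fact8} supplies nodes $\alpha$ and $\beta_0$ with $\alpha \subsetneq \beta_0 \subsetneq \iota(\alpha)$ and $\beta_0$ a branching node. All of $\iota$, $\alpha$, $\beta_0$ are obtained from $\delta$ using only the finitely many parameters $k$, $\xi$, and $\beta_0$.

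Next I construct by induction a sequence $\beta_0, \beta_1, \beta_2, \ldots$ of branching nodes of $T$ such that for every $n$,
\[
\iota^n(\alpha) \subsetneq \beta_n \subsetneq \iota^{n+1}(\alpha).
\]
The base case $n=0$ is Lemma \ref{lem:fact8}. For the inductive step, suppose the claim holds at stage $n$, and apply Lemma \ref{lem:fact8+} with the self-embedding $\iota$ to the pair $\alpha' = \iota^n(\alpha)$ and $\beta_0' = \beta_n$ (which satisfy $\alpha' \subsetneq \beta_0' \subsetneq \iota(\alpha')$ and $\beta_0'$ a branching node). The lemma produces a branching node $\beta_{n+1}$ with $\iota^{n+1}(\alpha) = \iota(\alpha') \subsetneq \beta_{n+1} \subsetneq \iota^2(\alpha') = \iota^{n+2}(\alpha)$, completing the induction.

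Finally I read off the domination. From $\beta_0 \subsetneq \iota(\alpha) \subsetneq \beta_1 \subsetneq \iota^2(\alpha) \subsetneq \cdots \subsetneq \beta_n \subsetneq \iota^{n+1}(\alpha)$ I get $|\beta_0| < |\beta_1| < \cdots < |\beta_n| < |\iota^{n+1}(\alpha)|$, so the $n+1$ branching nodes $\beta_0, \ldots, \beta_n$ lie at $n+1$ distinct branching levels all strictly below $|\iota^{n+1}(\alpha)|$. By Lemma \ref{lem:fact3} (or directly Lemma \ref{lem:fact2}), the branching levels of $T$ form the strictly increasing sequence $b(0) < b(1) < \cdots$, so the presence of $n+1$ branching levels below $|\iota^{n+1}(\alpha)|$ forces $b(n) < |\iota^{n+1}(\alpha)| = c(n)$. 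Thus $c$ dominates $b$ (in fact strictly on every input), which is the required conclusion.

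There is no real obstacle here: the creative content was already invested in Lemmas \ref{lem:fact7}, \ref{lem:fact8}, and \ref{lem:fact8+}, and the present lemma is just the bookkeeping that turns those local statements into a global domination statement. The only point that needs a moment's care is verifying that the induction genuinely applies Lemma \ref{lem:fact8+} at each step, i.e. that the hypothesis ``branching node strictly between $\alpha'$ and $\iota(\alpha')$'' is preserved after replacing $(\alpha, \beta_0)$ by $(\iota^n(\alpha), \beta_n)$, which is immediate from the inductive hypothesis.
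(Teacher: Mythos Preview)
Your proof is correct and follows essentially the same approach as the paper's own proof: define $\iota$ via Lemmas~\ref{lem:fact7} and~\ref{lem:fact8}, then iterate Lemma~\ref{lem:fact8+} to produce a nested sequence of branching nodes $\beta_n$ with $\iota^n(\alpha)\subsetneq\beta_n\subsetneq\iota^{n+1}(\alpha)$, and read off the domination. Your write-up is in fact slightly more careful than the paper's (you spell out the inductive step and count $n+1$ branching levels rather than $n$), but the argument is the same.
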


\begin{proof}
Define $\iota$ from $\delta$ as above and let $\alpha$ and $\beta_0$ be as in 
Lemma \ref{lem:fact8}.  Applying Lemma \ref{lem:fact8+} inductively, we obtain a sequence 
of branching node $\beta_0 \subsetneq \beta_1 \subsetneq \beta_2 \subsetneq \cdots$ such 
that $\iota^n(\alpha) \subsetneq \beta_n \subsetneq \iota^{n+1}(\alpha)$.  (For $n=0$, we define $\iota^0(\alpha) = \alpha$.)  
Therefore, there are at least $n$ many branching levels below $|\iota^{n+1}(\alpha)|$.  
\end{proof}

To prove Theorem \ref{thm:doublecode}, we need to transform the c.e.~subtree $T \subseteq 
2^{< \omega}$ into a computable tree $S$.  This transformation is easily done in a general 
setting.

\begin{lem}
\label{lem:transform}
For any c.e.~subtree $\hat{T} \subseteq 2^{< \omega}$,  there is a computable tree $\hat{S}$ such that 
$\hat{S} \cong \hat{T}$.  Furthermore, we can assume that the successor relation is computable 
in $\hat{S}$.  
\end{lem}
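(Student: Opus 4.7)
The plan is to build $\hat{S}$ by uniformly replacing the strings appearing in a c.e.~enumeration of $\hat{T}$ with natural numbers assigned in order of appearance, propagating the parent information from $\hat{T}$ at the moment each node is assigned. This turns a c.e.~presentation into a computable one by trading information about \emph{which} strings are in $\hat{T}$ for computable knowledge of the tree structure on $\omega$.

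First, I would fix a computable enumeration $\hat{T} = \bigcup_s \hat{T}_s$ such that each $\hat{T}_s$ is a finite subtree of $2^{<\om}$ closed under initial segments and $|\hat{T}_{s+1} \smallsetminus \hat{T}_s| \leq 1$. This is arranged in the standard way: whenever a string $\s$ is first enumerated from a given c.e.~enumeration of $\hat{T}$, we also enumerate all initial segments of $\s$ (which must lie in $\hat{T}$), and we stretch these insertions across single stages. If $\hat{T}$ is finite the lemma is trivial, so we assume $\hat{T}$ is infinite and that exactly one new string enters at each stage past some point.

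Next, I would construct $\hat{S}$ with universe $\om$ in stages, maintaining by induction on $s$ a bijective tree isomorphism $\pi_s:\hat{T}_s \to \{0,1,\dots,n_s-1\}$ onto a finite tree $\hat{S}_s$. At stage $s+1$, if a single new string $\s$ enters $\hat{T}_{s+1}$, let $\s'$ be its immediate predecessor in $2^{<\om}$; by closure under initial segments we have $\s' \in \hat{T}_s$ (unless $\s$ is the root, a trivial case). Assign $\pi_{s+1}(\s) = n_s$, declare $n_s$ to be an immediate successor of $\pi_s(\s')$ in $\hat{S}$, and set $n_{s+1}=n_s+1$. By construction $\pi_{s+1}$ extends $\pi_s$ and is a tree isomorphism.

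The verification is essentially immediate. Since every $k \in \om$ is assigned at some computable stage, and its immediate predecessor in $\hat{S}$ is fixed at that stage, the \emph{parent function} $p:\om \smallsetminus \{0\} \to \om$ is computable. This gives computability of the successor relation (we have $S_{\hat{S}}(m,n)$ iff $p(m)=n$) and of the tree order (to decide $m \preceq_{\hat{S}} n$, compute the finite chain $n, p(n), p^2(n), \dots$ up to the root and check whether $m$ appears). The map $\pi = \bigcup_s \pi_s$ is a bijection $\hat{T} \to \om$ that preserves the parent relation, hence it is a tree isomorphism onto $\hat{S}$.

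There is no real obstacle here; the only point requiring even mild care is the scheduling of the enumeration so that parents always arrive before children, which is handled by closing each enumerated string under initial segments before adding it.
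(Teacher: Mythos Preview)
Your proposal is correct and takes essentially the same approach as the paper: pull the tree structure back along a computable $1$-$1$ enumeration of $\hat{T}$. The paper's version is marginally slicker in that it skips your parent-before-child scheduling entirely---it uses an arbitrary $1$-$1$ enumeration $\varphi_e$ and observes that the successor relation is computable directly via $\varphi_e(m)' = \varphi_e(n)$ (string truncation in $2^{<\omega}$), so the extra care you take in ordering the enumeration, while not wrong, is unnecessary.
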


\begin{proof}
If $\hat{T}$ is finite, this lemma follows trivially.  Assume $\hat{T}$ is infinite and $\hat{T}$ is the range of 
the total computable 1-1 function $\varphi_e$.  Let $\hat{S}$ have domain $\omega$ and 
let $\leq_{\hat{S}}$ be defined by $n \leq_{\hat{S}} m \Leftrightarrow \varphi_e(n) \subseteq 
\varphi_e(m)$.  Then $\varphi_e$ is an isomorphism from $(\hat{S},\leq_{\hat{S}})$ to 
$(\hat{T}, \subseteq)$ as required.  Furthermore, $m$ is a successor of $n$ in $\hat{S}$ 
if and only if $\varphi_e(m)' = \varphi_e(n)$.  
\end{proof}

We now present the proof of Theorem \ref{thm:doublecode}.  Let $T$ be the c.e.~subtree of 
$2^{< \omega}$ we have constructed and let $S \cong T$ be the computable tree with 
a computable successor relation given by Lemma \ref{lem:transform}.  Let $b$ denote 
the branching level function for $S$ (which is the same as the branching level function for 
$T$ since $S \cong T$).  By Lemma \ref{lem:fact3}, $b$ dominates $a$ and hence 
by Lemma \ref{lem:aprop}, $0'' \leq_T b$.  Fix any nontrivial self-embedding 
$\delta:S \rightarrow S$.  By Lemma \ref{lem:fact9}, there is a nontrivial self-embedding 
$\iota:S \rightarrow S$ (defined from finitely many parameters) and a node 
$\alpha$ such that the function $c(n) = \text{ht}(\iota^{n+1}(\alpha))$ dominates $b$ 
(and hence by Lemma \ref{lem:aprop}, $0'' \leq_T c$).  Because we only need 
finitely many parameters to obtain $\iota$ from $\delta$, we have $\iota \leq _T \delta$.  
Furthermore, because the successor function is computable in $S$, we can determine 
the height of any node in $S$.  Therefore, $0'' \leq_T c \leq_T \iota \leq_T \delta$ as 
required.  This completes the proof of Theorem \ref{thm:doublecode}.

Because the coding in the proof of Theorem \ref{thm:doublecode} is done with the branching levels of $S$ and these levels 
are invariant under isomorphisms, we also obtain a result concerning the existence of nontrivial self-embeddings of computable type 3 
trees up to isomorphism.  

\begin{thm}
\label{iso0''}
There is a computable type 3 tree $S$ such that for any computable $\hat{S} \cong S$ and any nontrivial 
self-embedding $\delta: \hat{S} \rightarrow \hat{S}$, $0'' \leq_T 0' \oplus \delta$.  In particular, 
$\hat{S}$ does not have any $\Delta^0_2$ nontrivial self-embeddings and $\hat{S}$ does 
not have any nontrivial self-embeddings which are strictly between $0'$ and $0''$.   
\end{thm}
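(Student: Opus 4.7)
The plan is to reuse the tree $S$ from Theorem \ref{thm:doublecode} and observe that the entire coding argument in that proof is isomorphism invariant, with the $0'$ oracle serving only to recover the successor relation in an arbitrary computable copy. Given any computable $\hat S \cong S$ and any nontrivial self-embedding $\delta:\hat S\rightarrow\hat S$, my goal is to produce a function $c\leq_T 0'\oplus\delta$ that dominates $a$, and then invoke Lemma \ref{lem:aprop}(4) to conclude $0''\leq_T 0'\oplus\delta$.

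First I would observe that the branching-level function $b(n)$ of a tree is determined by its abstract order type, so the branching-level function of $\hat S$ agrees with that of the original c.e.\ subtree $T\subseteq 2^{<\omega}$ used in the proof of Theorem \ref{thm:doublecode}; in particular (Lemma \ref{lem:fact3}) it still dominates $a$. Next I would note that Lemmas \ref{lem:fact6}--\ref{lem:fact9} refer only to the abstract tree structure (binary branching, the absence of isolated paths, and the abstract branching-level function), so they transfer verbatim to $\hat S$. Running them inside $\hat S$ with $\delta$ as input yields, from $\delta$ together with finitely many parameters, a nontrivial self-embedding $\iota:\hat S\rightarrow\hat S$ and a node $\alpha$ such that $c(n)=\mathrm{ht}(\iota^{n+1}(\alpha))$ dominates the branching-level function of $\hat S$, and therefore dominates $a$.

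The main obstacle to computing $c$ effectively from $\delta$ is that neither the successor relation nor the height function need be computable in $\hat S$, so I cannot evaluate $\mathrm{ht}$ directly. This is exactly where the $0'$ oracle is used: as already observed in the proof of Lemma \ref{universal}, the successor relation of any computable tree is $\Pi^0_1$, hence computable from $0'$, and from the successor relation one can effectively determine the height of any node by tracing the chain of successors back to the root. Therefore $c\leq_T 0'\oplus\delta$, and by Lemma \ref{lem:aprop}(4) we conclude $0''\leq_T 0'\oplus\delta$.

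Finally, the two ``in particular'' consequences are formal. If $\delta$ were $\Delta^0_2$, then $\delta\leq_T 0'$, so $0'\oplus\delta\equiv_T 0'$, giving the impossible $0''\leq_T 0'$. If $0'<_T\delta<_T 0''$, then $0'\oplus\delta\equiv_T\delta$, yielding $0''\leq_T\delta$ and contradicting $\delta<_T 0''$. The whole argument is thus a short wrapping of Theorem \ref{thm:doublecode}; the only nonroutine point is the recognition that each ingredient of the Section \ref{sec:type3} proof is either isomorphism invariant or recoverable from $0'$.
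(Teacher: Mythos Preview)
Your proposal is correct and follows essentially the same approach as the paper: the paper's proof also reuses the tree from Theorem \ref{thm:doublecode}, observes that the argument of Lemma \ref{lem:fact9} transfers to any isomorphic copy $\hat S$, and notes that the only missing ingredient---the successor relation (and hence the height function) in $\hat S$---is recoverable from $0'$, yielding $0'' \leq_T c \leq_T 0' \oplus \delta$. Your write-up is in fact slightly more detailed than the paper's, since you spell out the ``in particular'' consequences that the paper leaves implicit.
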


\begin{proof}
Fix a computable tree $S \cong T$ where $T$ is the c.e.~subtree 
$T \subseteq 2^{< \omega}$ constructed above.  Fix any $\hat{S} \cong S$ 
and any nontrivial self-embedding $\delta$ of $\hat{S}$.  Let $\iota$ and $c$ be the 
functions given by Lemma \ref{lem:fact9} for $\hat{S}$ and $\delta$.  The only change from 
the proof of Theorem \ref{thm:doublecode} is that we do not know that the successor relation 
in $\hat{S}$ is computable.  However, since $0' \oplus \delta$ can compute both $\iota$ and the 
successor relation in $\hat{S}$, we have $0'' \leq_T c \leq_T 0' \oplus \delta$.
\end{proof}

\section{Chains and antichains}
\label{sec:cac}

The fact that any infinite partial order must have either an infinite chain or an infinite antichain is a simple application of Ramsey's Theorem for 
pairs and two colors.  Herrmann \cite{her:01} examined the effective content of this result and proved the following theorem.

\begin{thm}[Herrmann \cite{her:01}] 
\label{thm:herrmann} 
If $P$ is an infinite computable partial order, then $P$ has either an infinite $\Delta^0_2$ 
chain or an infinite $\Pi^0_2$ antichain.  In addition, there is an infinite computable partial order 
which has no infinite $\Sigma^0_2$ chains or antichains.
\end{thm}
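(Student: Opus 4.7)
The plan is to address the two assertions in Theorem \ref{thm:herrmann} separately: the upper bound (an infinite $\Delta^0_2$ chain or an infinite $\Pi^0_2$ antichain in any infinite computable partial order) and the lower bound (a computable partial order with no infinite $\Sigma^0_2$ chain or antichain).

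For the upper bound, I would begin by isolating the $\Sigma^0_2$ set $D = \{x \in P : \{y : y \succ x\} \text{ is finite}\}$. The first key observation is that every infinite chain in $P$ lies entirely in $P \setminus D$: for if $x_0 \prec x_1 \prec x_2 \prec \cdots$ is such a chain, then each $x_i$ has infinitely many successors above it, so $x_i \notin D$. Consequently $P$ has an infinite chain if and only if $P \setminus D$ is infinite. In that case I would attempt, using $0'$ as oracle, to extend a chain $x_0 \prec x_1 \prec \cdots$ one step at a time by searching for some $y \succ x_k$ that currently appears to lie outside $D$ (that is, for which the $0'$-approximation has not yet certified $U(y) = \{z : z \succ y\}$ as finite). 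If this succeeds at every stage, the resulting chain is $0'$-computable, hence $\Delta^0_2$. If the greedy procedure fails, I would fall back to the classical reasoning: the failure indicates $P \setminus D$ is (or appears) finite, forcing any infinite chain to be obstructed and guaranteeing infinite antichains. I would then produce a $\Pi^0_2$ antichain by picking elements $a_0, a_1, \ldots$ so that each $a_{k+1}$ is incomparable with $a_0, \ldots, a_k$, using $0'$ to certify via the $\Sigma^0_2$ information on $D$ that each finite comparability class is complete before moving on.

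For the lower bound, the plan is a direct diagonalization. Let $\{S_e\}_{e \in \omega}$ enumerate the $\Sigma^0_2$ sets with uniform approximations $\{S_{e,s}\}$ relative to $0'$. I would construct $P$ on domain $\omega$ by committing to $\preceq_P$ in stages to satisfy, for each $e$, the requirement $R_e$: $S_e$ is neither an infinite chain nor an infinite antichain of $P$. The strategy for $R_e$ is to wait until the approximation to $S_e$ displays three elements $a < b < c$ looking permanent, and then commit to a configuration that is neither linearly ordered nor an antichain, for example by declaring $a \preceq_P b$ while keeping $b$ incomparable to $c$. A priority argument handles conflicts between requirements, and the usual bookkeeping preserves transitivity and the existing commitments of higher-priority strategies.

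The main obstacle is the positive direction and, in particular, the asymmetry between chains and antichains. Jockusch's generic bound for Ramsey's theorem for pairs would yield only a $\Pi^0_2$ chain and a $\Pi^0_2$ antichain, so improving the chain bound to $\Delta^0_2$ requires exploiting that chain-building is a purely local procedure (any larger element works) while antichain-building is genuinely nonlocal (incomparability with all prior choices must be certified). Making the two fallback arms interlock cleanly, and verifying that the complexity classes claimed are actually achieved uniformly in the cases, is where the technical work will concentrate.
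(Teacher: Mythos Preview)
The paper does not give its own proof of this theorem: it is stated as a result of Herrmann and cited to \cite{her:01}, with no argument supplied. So there is nothing in the paper to compare your proposal against; the paper immediately moves on to prove the sharper tree-specific analogues.

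That said, your upper-bound sketch has a genuine gap. You assert that $P$ has an infinite chain if and only if $P\setminus D$ is infinite, where $D=\{x:\{y:y\succ x\}\text{ is finite}\}$. The forward direction is fine, but the converse fails: take $P=\omega\times\{0,1\}$ with $(n,0)\prec(m,1)$ for all $n,m$ and no other strict relations. Then every $(n,0)$ lies in $P\setminus D$, so $P\setminus D$ is infinite, yet the longest chain has two elements. So the dichotomy you build the case split on is not valid, and the greedy $0'$-search you describe can stall even when $P\setminus D$ is infinite. Herrmann's actual argument is more delicate than this; the asymmetry you correctly identify at the end is real, and overcoming it requires a more careful analysis than partitioning on $D$. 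Your lower-bound sketch is in the right spirit (a priority diagonalization against $\Sigma^0_2$ sets), though the specific local move you describe---fixing three elements $a,b,c$ and making $a\preceq_P b$ with $b$ incomparable to $c$---does not by itself kill $S_e$ as both a chain and an antichain, since $S_e$ may contain many other elements.
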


In this section, we consider this result in the context of trees rather than general partial orders and 
we show that for trees these results can be improved by exactly one quantifier.  

\begin{thm}
Let $T$ be an infinite computable tree.  $T$ has either an infinite computable chain or an 
infinite $\Pi^0_1$ antichain.
\end{thm}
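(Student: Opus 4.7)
The plan is to split on whether the leaf set $L = \{n \in T : \forall m \in T (n \not\prec m)\}$—which is manifestly $\Pi^0_1$—is finite or infinite. If $L$ is infinite then I am done immediately: any two distinct leaves are $\preceq$-incomparable, so $L$ itself serves as the desired infinite $\Pi^0_1$ antichain.

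All the work is in the case when $L$ is finite, where I will produce an infinite computable chain. The key preliminary lemma I will prove is that
\[
F := \{n \in T : T(n) \text{ is finite}\}
\]
is itself finite whenever $L$ is. The argument is that a nonempty finite subtree $T(n)$ must contain a $\preceq$-maximal element $l$, and such an $l$ is automatically a leaf of $T$ (any strict $T$-successor of $l$ would be $\succeq n$ and hence belong to $T(n)$, contradicting maximality). Hence every $n \in F$ satisfies $n \preceq l$ for some $l \in L$, giving $F \subseteq \bigcup_{l \in L}\{m \in T : m \preceq l\}$, which is a finite union of finite predecessor sets (finite by the very definition of tree). So $F$ is finite.

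Once $F$ is finite, I hard-code it as a parameter so that $\chi_F$ is computable. I build the chain by effective recursion: let $x_0$ be the least element of $T \setminus F$ (nonempty because $T$ is infinite and $F$ finite), and given $x_i \in T \setminus F$ let $x_{i+1}$ be the least $m \in T$ with $x_i \prec m$ and $m \notin F$. This search succeeds at every stage, because $T(x_i)$ is infinite while $F$ is finite, so $T(x_i) \setminus (\{x_i\} \cup F)$ is nonempty; and both defining conditions are decidable. By transitivity of $\prec$ the sequence $\{x_i\}_{i\in\omega}$ is an infinite computable chain in $T$.

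The only genuine obstacle is spotting that $F$ is automatically finite in the second case; after that the chain construction is routine, and—pleasantly—one never has to identify immediate successors or compute heights, both of which can fail to be computable for the general notion of computable tree used in this paper.
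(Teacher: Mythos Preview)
Your proof is correct and follows essentially the same approach as the paper: both split on whether the leaf set is infinite (yielding the $\Pi^0_1$ antichain) or finite, and in the finite case both exploit the fact that only finitely many nodes can lie below a leaf to build an infinite computable chain by greedy search. The paper phrases the second case slightly differently---it picks a single node $x$ with $T(x)$ infinite and leaf-free and then simply takes the $\leq_{\mathbb{N}}$-least strict extension at each step---whereas you hard-code the finite set $F$ and avoid it at each step; but your lemma that $F$ is finite is exactly what justifies the paper's choice of $x$, so the two arguments are essentially equivalent.
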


\begin{proof}
If $T$ has infinitely many leaves, then the set of leaves is an infinite $\Pi^0_1$ antichain.  
Otherwise, $T$ must have a node $x$ such that $T(x)$ is infinite and contains no leaves.  In this 
case, let $x_0 = x$ and $x_{i+1}$ be the $\leq_{\mathbb{N}}$ least element of $T$ which 
satisfies $x_i \prec x_{i+1}$.  The sequence $x_0, x_1, \ldots$ gives an infinite computable chain.
\end{proof}  

\begin{thm}
\label{thm:best}
There is an infinite binary branching computable tree such that $T$ has no infinite c.e.~chains or  antichains.
\end{thm}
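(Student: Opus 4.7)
The plan is to build $T$ as a computable binary branching tree with a unique infinite path $X$ and finite side subtrees of a controlled shape, arranged so that both requirements reduce to a single one: that $X$ be \emph{immune}, i.e., $X$ contains no infinite c.e.\ subset. The construction is a movable-marker priority argument in the style of Theorem \ref{iso}, except that the markers are moved to diagonalize against c.e.\ sets rather than to code $K$.

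To reduce to immunity, I arrange the tree so that each non-leaf node has exactly two children, and so that the ``number of children'' function on nodes is computable (for instance by reserving distinct classes of integer labels for ``internal'' and ``leaf'' nodes and preserving these labels throughout the construction). Any chain in $T$ containing a leaf has only finitely many elements, so every infinite chain must be a subset of $X$; if $X$ is immune, $T$ has no infinite c.e.\ chain. For antichains, define $\phi : T \setminus \{\lambda\} \to T$ by letting $\phi(a)$ be the parent in $T$ of the $\preceq$-largest ancestor of $a$ that has two children. Because the tree structure (including the count of children at each node) is computable, $\phi$ is computable. A direct check using the shape of the side subtrees shows that $\phi(a) \in X$ for every non-root $a$, and that $\phi$ collapses each side subtree to a single element of $X$, with distinct side subtrees mapped to distinct elements of $X$. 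Given any infinite c.e.\ antichain $A$, at most one element of $A$ lies on $X$ and each side subtree contributes only boundedly many elements to $A$; thus $\phi(A)$ is an infinite c.e.\ subset of $X$, contradicting immunity.

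To build $T$ with $X$ immune, I would adapt the movable-marker construction of Theorem \ref{iso}. The requirements are
\[
I_e : X \cap W_e \text{ is finite}, \qquad e \in \omega,
\]
arranged in priority order. We maintain computable approximations $x_{n,s}$ with $x_n = \lim_s x_{n,s}$ and, at each stage $s$, add the new element $s$ to $T$ as an immediate successor of some chosen node. Each strategy $I_e$ is assigned its own marker $x_{n_e}$. When $W_{e,s}$ enumerates the current value of $x_{n_e,s}$, the strategy moves the marker by designating a fresh integer as the continuation of the path above $x_{n_e-1,s}$; the old integer is then demoted to the root of a side subtree, whose prescribed finite shape we complete by adding the required leaves.

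The main obstacle is keeping $T$ binary branching while moving markers: each move of $x_{n_e}$ adds a new child to the node $x_{n_e-1,s}$, so the bookkeeping must bound the number of moves of each marker. This is arranged by dedicating each marker to a single strategy and making that marker move at most a bounded (in fact, a predetermined) number of times. The verification that each marker stabilizes, each $I_e$ is satisfied, and the resulting $\preceq$-relation on $\omega$ is computable follows the finite-injury template of Theorem \ref{iso}. A routine variant of the argument carries into models of $\text{WKL}_0$, giving the corollary noted in the introduction that $\text{WKL}_0$ does not prove the Chain/Anti-Chain Principle for binary branching trees.
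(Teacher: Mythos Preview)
Your reduction-to-immunity plan breaks on the antichain side. You explicitly make the branching function computable so that $\phi$ will be computable. But in a binary-branching tree with a unique infinite path $X$ in which every non-leaf has exactly two children, each node of $X$ has exactly one child off $X$, and every such finite off-$X$ subtree contains a leaf; so $T$ has infinitely many leaves. With the branching function computable, the set of leaves is an infinite \emph{computable} antichain --- exactly what the theorem forbids.

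This is not just a bookkeeping slip; the whole idea of routing antichains through $X$ via a computable $\phi$ is incompatible with $X$ being immune. You want $\phi:T\setminus\{\lambda\}\to T$ computable with $\phi(a)\in X$ for every $a$, and with $\phi(A)$ infinite whenever $A$ is an infinite antichain. But then $\phi(T\setminus\{\lambda\})$ is a c.e.\ subset of $X$ (the domain is computable), and it must be infinite since it contains $\phi(A)$; hence $X$ is not immune. Any total computable map from a computable domain into an immune set has finite range, so no $\phi$ with the properties you need can exist. (Your chain argument, by contrast, is correct: in a tree with a unique path every infinite chain really is contained in that path, so immunity of $X$ does kill infinite c.e.\ chains.)

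The paper handles the two kinds of requirements directly and symmetrically with a single finite-injury construction. There is one current growth point $r_i$; all new nodes are added above it. To defeat $W_e$ as a chain, wait for some $x\in W_e$ with $r_{2e}\preceq x$ and move the growth point to the \emph{sibling} of $x$, so that cofinitely many nodes of $T$ are incomparable with $x$. To defeat $W_e$ as an antichain, wait for such an $x$ and move the growth point to $x$ itself, so that cofinitely many nodes are comparable with $x$. Each requirement moves its parameter at most once after it stops being injured, so the construction is finite injury and the resulting tree is computable and binary branching.
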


\begin{proof}
We build $(T, \preceq)$ to meet the following requirements.   
\begin{gather*}
R_{2e}: \, W_e \, \text{is not an infinite chain} \\
R_{2e+1}: \, W_e \, \text{is not an infinite antichain}
\end{gather*}

We build $T$ in stages beginning with $T_0 = \{ \lambda \}$.  Throughout the construction, 
we maintain the property that each node $x$ is either currently a leaf or else has 
exactly two successors.  Each requirement $R_i$ keeps a parameter $r_i$ 
such that any node $x$ added to $T$ by a lower priority requirement after $r_i$ is defined 
satisfies $r_i \preceq x$.  For uniformity of notation, we set $r_{-1} = \lambda$.  If a strategy 
is initialized, then all of its parameters become undefined.  Any parameter not explicitly redefined or  
undefined by initialization retains its value.  If a requirement ends the current stage, then 
it initializes all lower priority requirements.  The action for $R_{2e}$ at stage $s$ is as follows.
\begin{enumerate}
\item If $s$ is the first stage at which $R_{2e}$ is eligible to act or if $R_{2e}$ has been initialized 
since it was last eligible to act, let $a$ be such that $r_{2e-1} \preceq a$ and 
$a$ is a leaf in $T_s$.  Add new nodes $b$ and $c$ to $T_s$ 
as immediate successors of $a$.  Set $r_{2e} = b$ and end the stage.  
\item If $r_{2e}$ is defined but $R_{2e}$ has not succeeded yet, then check whether there 
is a node $x \in T_s$ such that $r_{2e} \preceq x$ and $x \in W_{e,s}$.  If not, then 
let $R_{2e+1}$ act.  If so, then let $z$ denote the immediate predecessor 
of $x$.  Since $z$ is not a leaf, it has two immediate successors.  Let $y$ denote the successor 
of $z$ which is not equal to $x$.  Redefine $r_{2e} = y$ and end the stage.  
We say that $R_{2e}$ has succeeded. 
\item If $R_{2e}$ has succeeded, then let $R_{2e+1}$ act.   
\end{enumerate}
The action for $R_{2e+1}$ at stage $s$ is as follows.
\begin{enumerate}
\item If $s$ is the first stage at which $R_{2e+1}$ is eligible to act or if $R_{2e+1}$ 
has been initialized since it was last eligible to act, define $r_{2e+1} = r_{2e}$.  End the 
stage.
\item If $r_{2e+1}$ is defined and $R_{2e+1}$ has not succeeded yet, check whether 
there is a node $x \in T_s$ such that $r_{2e+1} \preceq x$ and $x \in W_e$.  If not, then 
let $R_{2e+2}$ act.  If so, then redefine $r_{2e+1} = x$ and end the stage.  
We say $R_{2e+1}$ succeeds.
\item If $R_{2e+1}$ has succeeded, then let $R_{2e+2}$ act.
\end{enumerate}

This argument is finite injury so each parameter reaches a limit.  Because 
nodes are added to $T$ only in Step 1 of the $R_{2e}$ action, $T$ has the property 
that at each stage, each node is either currently a leaf or has exactly two successors.  

To see that $R_{2e}$ is met, let $s$ be the least stage such that $R_{2e}$ is never initialized after 
stage $s$.  The parameter $r_{2e}$ is defined at stage $s$ and can only change values after 
stage $s$ if $R_{2e}$ changes the value in Step 2 of its action.  

There are two cases to consider.  First, suppose there is a stage $t > s$ and a node $x \in T_t$ such that 
$r_{2e} \preceq x$ and $x \in W_{e,t}$.  In this case, $r_{2e}$ is redefined so that 
$r_{2e}$ is incompatible with $x$.  Because $r_{2e}$ is not changed again and because 
no strategy of higher priority than $R_{2e}$ adds elements to $T$ after stage $t$, there are 
only finitely many elements in $T$ which are not above this final value of $r_{2e}$.  Therefore, 
$x$ cannot be part of an infinite chain and $R_{2e}$ is met.

Second, suppose there is no such stage $t$ and node $x$.  In this case, $r_{2e}$ 
has reached its limit at stage $s$ and every node added to $T$ after stage $s$ is added 
above $r_{2e}$.  Because there are only finitely many nodes in $T$ which are not above 
$r_{2e}$, there cannot be an infinite chain which is disjoint from $T(r_{2e})$.  Therefore, 
$R_{2e}$ is met.  

The argument that $R_{2e+1}$ is met is quite similar.  Let $s$ be the least stage such that 
$R_{2e+1}$ is never initialized after $s$ and let $r_{2e+1}$ denote the value of the parameter 
at stage $s$.  If there is no node $x \in W_e$ such that $r_{2e+1} \preceq x$, then 
$R_{2e+1}$ never changes the value of $r_{2e+1}$ and there are only finitely many 
nodes of $T$ which are not above $r_{2e+1}$.  Any infinite antichain must intersect 
$T(r_{2e+1})$, so $R_{2e+1}$ is met.  

If there is a node $x \in W_e$ such that $r_{2e+1} \preceq x$, then let $x$ be the first such node seen by 
$R_{2e+1}$ after stage $s$.  At this point, $r_{2e+1}$ is redefined to be equal to $x$, so there 
are only finitely many nodes in $T$ which are not comparable to $x$.  Hence, no set containing 
$x$ can be an infinite antichain.  Therefore, $R_{2e+1}$ is won.  
\end{proof}

\begin{thm}
\label{thm:low}
Let $L$ be any low set.  There is an infinite binary branching computable tree $T$ such that $T$ has no 
infinite chains or antichains computable from $L$.
\end{thm}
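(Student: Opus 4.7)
The plan is to strengthen the finite-injury construction of Theorem~\ref{thm:best} to diagonalize against all $L$-computable sets, exploiting the lowness of $L$. Requirements become $R_{2e}$: $\Phi_e^L$ is not total, or $A_e := \{x : \Phi_e^L(x)=1\}$ is not an infinite chain of $T$; and $R_{2e+1}$: the analog for antichains. Since $L$ is low we have $L' \leq_T 0'$, so by Shoenfield's limit lemma there is a total computable function $a(e,x,s)$ with $\lim_s a(e,x,s) = \Phi_e^L(x)$ whenever $\Phi_e^L(x)\halts$. Let $A_e[s] := \{x\leq s : a(e,x,s)=1\}$ be the resulting computable approximation to $A_e$.

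To keep $T$ computable while using a limit-rather-than-enumeration approximation, I would organize strategies on a priority tree, giving each requirement outcomes $\infty <_L fin$. When a strategy $\alpha$ working on $R_{2e}$ is eligible at stage $s$, it acts as in Theorem~\ref{thm:best} with $A_e[s]$ in place of $W_{e,s}$: if there is $x$ with $r_\alpha \preceq x$ and $x \in A_e[s]$, it redirects $r_\alpha$ to the sibling of $x$'s immediate predecessor, takes outcome $\infty$, and initializes all lower priority strategies; otherwise it takes outcome $fin$. The analogous chain-extending action handles $R_{2e+1}$. Nodes are added in sibling pairs at current leaves exactly as in Theorem~\ref{thm:best}, so the resulting $T$ is computable, infinite, and binary branching.

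The main obstacle is the $\infty$ outcome: a witness $x \in A_e[s]$ can later leave the approximation, wasting the redirection and possibly causing infinitely many redirections on the true path. I would resolve this with a Robinson-style stability filter built into $a$, acting on a witness $x$ only once $a(e,x,t) = 1$ has held stably over a window of stages that grows with the priority-tree depth of $\alpha$ and with $x$. Since $L$ is low, every genuine $x \in A_e$ eventually passes this filter, while every $x \notin A_e$ eventually fails it. Consequently, on the true path: if $\alpha$'s true outcome is $fin$, only finitely many redirections occur and the argument of Theorem~\ref{thm:best} meets $R_{2e}$ directly; if the true outcome is $\infty$, there are infinitely many genuine $x_1, x_2, \ldots \in A_e$, and because the redirection rule forces the positions $r_\alpha$ traverses to be strictly $\prec$-descending, the corresponding $x_i$ are pairwise incomparable in $T$ (each $x_{j+1}$ lies in the sibling subtree of $x_j$'s parent relative to $x_j$), so $A_e$ is not a chain. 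The antichain requirements $R_{2e+1}$ are met dually, via the standard chain-building action against a putative antichain. The tree $T$ is infinite because at every stage the eligible strategy of highest priority without a follower adds a new pair of sibling leaves, producing an infinite binary branching computable tree meeting all requirements.
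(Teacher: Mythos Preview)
Your proposal has a real gap in the $\infty$-outcome analysis. The stability filter you describe guarantees only that each \emph{individual} false witness $x\notin A_e$ eventually stops passing the filter; it does not prevent an infinite sequence of \emph{distinct} false witnesses from each passing once at successive stages. So $\alpha$'s true outcome can be $\infty$ while cofinitely many of the $x_i$ lie outside $A_e$, and then your conclusion ``$A_e$ contains the antichain $\{x_i\}$'' fails. In fact in that scenario $A_e$ could be a genuine infinite chain lying along the unique infinite path through the $y_i$'s, so $R_{2e}$ is simply not met. There is a second, structural problem: if the true outcome is $\infty$ then $r_\alpha$ moves infinitely often, and any strategy $\beta\supseteq\alpha*\infty$ must place its own parameter $r_\beta\succeq r_\alpha$. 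Either $\beta$ is reinitialized at each $\infty$ step (and never settles), or $r_\beta$ is left fixed above an old value of $r_\alpha$ and ends up in a subtree that $\alpha$ has abandoned, so the tree no longer funnels through $r_\alpha$ and the whole bookkeeping collapses.

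The paper's proof avoids both issues by using lowness in the Robinson way, namely at the level of the \emph{existence} question rather than elementwise membership. Since $L'\leq_T 0'$, the $\Sigma^L_1$ predicate $P(e,k)\equiv\exists x\succeq k\,(\varphi_e^L(x)=1)$ and the predicate $Q(x,u)\equiv\forall t\geq u\,(\varphi_{e,t}^L(x)=1)$ are both $\Delta^0_2$ and hence have computable approximations that converge. The strategy keeps an anchor $\hat r_{2e}$, waits for the approximation to $P(e,\hat r_{2e})$ to say yes, then picks a candidate $x$ with $Q(x,s,s)=1$, redirects $r_{2e}$ to the sibling of $x$, and \emph{reverts} to $\hat r_{2e}$ whenever either approximation flips back. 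Because $P$ and $Q$ genuinely have limits, this is a finite-injury construction with no $\infty$ outcome and no priority tree at all: each $r_{2e}$ reaches a final value, and the verification is exactly as in Theorem~\ref{thm:best}. The filter you want is not a stability window on $a(e,x,s)$ but the $\Delta^0_2$ approximation to $P$; once you make that substitution your argument becomes the paper's.
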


\begin{proof}
We need to meet the following requirements.
\begin{gather*}
R_{2e}: \, \varphi_e^L \, \text{is not an infinite chain} \\
R_{2e+1}: \, \varphi_e^L \, \text{is not an infinite antichain}
\end{gather*}

As in the proof of Theorem \ref{thm:best}, we build $T$ in stages 
and maintain the property that each node $x$ is either currently a leaf or 
else has exactly two immediate successors.  Each requirement $R_i$ keeps a 
parameter $r_i$ as before.  The main change in this construction is that the value of 
$r_i$ can change more than once (but still only finitely often) after the last time $R_i$ is initialized.  
As before, whenever a requirement ends a stage, it initializes all lower priority requirements.  

$R_{2e}$ keeps three parameters: $r_{2e}$,  
$\hat{r}_{2e}$ and $x_{2e}$.  The $r_{2e}$ parameter is used as before to force lower 
priority requirements to 
work above $r_{2e}$.  The $\hat{r}_{2e}$ parameter is used to store an ``old value'' of 
$r_{2e}$ in case our approximations to computations from $L$ change and we need to 
revert back to an earlier 
situation and wait for reconvergence.  The $x_{2e}$ parameter will be explained when it 
appears in the construction below.  

When $R_{2e}$ first acts or if $R_{2e}$ has been initialized since its last action, it 
lets $a$ be a node such that $r_{2e-1} \preceq a$ and $a$ is currently a leaf.  It adds 
two new nodes $b$ and $c$ as immediate successors of $a$ in $T$, defines 
$\hat{r}_{2e} = r_{2e} = b$ and ends the stage.  At future stages $s$, $R_{2e}$ requires lower 
priority strategies to work above $r_{2e}$ and it tries to decide 
whether $\exists x \, \exists t \, ( r_{2e} \preceq x \wedge \varphi_{e,t}^L(x) = 1)$.  This predicate is 
$\Sigma^L_1$, so it is computable from $L'$ and hence from $0'$ (since $L$ is low).  
Fix the $\Delta^0_2$ predicate $P(e,k)$ defined by 
\[
P(e,k) \, \Leftrightarrow \, \exists x \, \exists t \, ( k \preceq x \wedge \varphi_{e,t}^L(x) = 1).
\]
Let $P(e,k,s)$ be a computable approximation such that 
$P(e,k) = \lim_{s} P(e,k,s)$.  

At stage $s$, $R_{2e}$ checks whether $P(e,\hat{r}_{2e},s) = 1$.  (Notice that 
$\hat{r}_{2e} = r_{2e}$ at this point, so $R_{2e}$ is really checking whether 
$P(e,r_{2e},s) = 1$.)  If not, then $R_{2e}$ has no need to diagonalize and it lets 
$R_{2e+1}$ act.  If so, then $R_{2e}$ wants to find the least witness $x$ 
which appears to satisfy this existential statement.  We define a second $\Delta^0_2$ 
predicate $Q(x,u)$ by
\[
Q(x,u) \Leftrightarrow \forall t ( t \geq u \rightarrow \varphi_{e,t}^{L}(x) = 1)
\]
and fix a computable approximation $Q(x,u,s)$ such that $Q(x,u) = \lim_{s}Q(x,u,s)$.  (The 
predicate $Q(x,u)$ is computable from $L'$, so it is $\Delta^0_2$ because $L$ is low.)  
$R_{2e}$ looks for the least $x$ such that $\hat{r}_{2e} = r_{2e} \preceq x$ 
and $Q(x,s,s)$.  If there is no such $x$ then $R_{2e}$ lets $R_{2e+1}$ act and waits to check again 
at the next stage.  Eventually, it must find an $x$ and $s$ for which $Q(x,s,s)$.  
(Of course, if $P(e,\hat{r}_{2e},s)$ changes from value 1 to value 0 while $R_{2e}$ is waiting for 
such an $x$, it ends the stage and returns to waiting for $P(e,\hat{r}_{2e},s)$ to have value 1.)  

When $R_{2e}$ finds such an $x$, it defines its third parameter $x_{2e} = x$.  Let $z$ be 
the immediate predecessor of $x_{2e}$ and let $y$ 
be the successor $z$ which is not equal to $x_{2e}$.  $R_{2e}$ sets $r_{2e} = y$ but leaves the 
value of $\hat{r}_{2e}$ unchanged.  That is, $\hat{r}_{2e}$ retains the ``old value'' of $r_{2e}$.  
$R_{2e}$ ends the stage (and hence initializes the lower priority strategies so that they will work 
above the new value of $r_{2e}$ in  the future).  

If $P(e,\hat{r}_{2e})$ really holds and $x_{2e}$ really is a correct witness for this existential statement, 
then we have successfully diagonalized.  However, it is possible that either 
$P(e,\hat{r}_{2e})$ does not hold or that $x_{2e}$ is not a correct witness.  Therefore, at each future 
stage $s$, $R_{2e}$ continues to check whether $P(e,\hat{r}_{2e},s) = 1$.  If this 
value ever changes to $0$, then $R_{2e}$ redefines $r_{2e}$ to have value 
$r_{2e} = \hat{r}_{2e}$, cancels its parameter $x_{2e}$, ends the stage 
and returns to waiting for $P(e,\hat{r}_{2e},s) = 1$.  If 
$P(e,\hat{r}_{2e},s)$ retains its value of $1$, then $R_{2e}$ checks whether 
$Q(x_{2e},s,s)$ still gives the value 1.  If so, then $R_{2e}$ continues to believe it has correctly 
diagonalized and lets $R_{2e+1}$ act.  If $Q(x_{2e},s,s) = 0$ at some future stage $s$ 
(while $P(e,\hat{r}_{2e},s) = 1$), then $R_{2e}$ cancels the parameter $x_{2e}$, redefines $r_{2e}$ 
to have value $r_{2e} = \hat{r}_{2e}$, ends the stage and returns to looking for the 
least $x$ such that $Q(x,s,s) = 1$.  

To understand why this strategy eventually succeeds, let $u$ be a stage such that 
$R_{2e}$ is never initialized after $u$.  At stage $u$, $R_{2e}$ defines $r_{2e}$ 
and $\hat{r}_{2e}$ and it will never change the value of $\hat{r}_{2e}$ again.  (The entire 
construction is finite injury so there is such a stage.)  Because 
$P(e,\hat{r}_{2e})$ is a $\Delta^0_2$ predicate, there is a $t \geq u$ such that 
for all $s \geq t$, $P(e,\hat{r}_{2e},s)$ is either constantly 0 or constantly 1.  

If $P(e, \hat{r}_{2e},s)$ is eventually constantly 0, then $r_{2e}$ will eventually be set 
permanently equal to $\hat{r}_{2e}$.  From this stage on, all nodes added to $T$ are 
above $\hat{r}_{2e}$.  Because $P(e,\hat{r}_{2e})$ does not hold, $\varphi_e^L$ 
does not place any elements from $T(\hat{r}_{2e})$ into its chain.  Because 
there are only finitely many elements of $T$ outside of $T(\hat{r}_{2e})$, 
$\varphi_e^L$ cannot compute an infinite chain and $R_{2e}$ is met.  

If $P(e,\hat{r}_{2e},s)$ is eventually constantly 1, then there is an $x$ such that $\hat{r}_{2e} 
\preceq x$ and $x$ is a witness to the existential statement $P(e,\hat{r}_{2e})$.  
Because $Q(x,u,s)$ is a $\Delta^0_2$ predicate and we look for the  
least witness $x$, $R_{2e}$ eventually defines 
$x_{2e}$ such that $Q(x_{2e},s,s)$ has reached its limit of 1.  Both $r_{2e}$ and $x_{2e}$ 
have reached their limits at this stage.  
After this stage, all elements added to $T$ are above $r_{2e}$ and hence are 
incomparable with $x_{2e}$.  Because $\varphi_e^L(x_{2e}) = 1$, $\varphi_e^L$ 
cannot compute an infinite chain in $T$ so $R_{2e}$ is met.  

In either case, notice that $\hat{r}_{2e}$ and $r_{2e}$ reach limits and that 
$x_{2e}$ either reaches a limit or there is a stage after which it is never defined.  Therefore, 
$R_{2e}$ only initializes lower priority strategies finitely often.  

The strategy to meet requirement $R_{2e+1}$ is similar.  $R_{2e+1}$ also keeps three 
parameters $r_{2e+1}$, $\hat{r}_{2e+1}$ and $x_{2e+1}$.  When it first acts (or after it 
has been initialized), $R_{2e+1}$ sets $\hat{r}_{2e+1} = r_{2e+1} = r_{2e}$ and ends the stage.

At future stages, $R_{2e+1}$ checks whether $P(e,\hat{r}_{2e+1},s) = 1$.  If not, it lets 
$R_{2e+2}$ act.  If so, it looks for the least $x$ such that 
$Q(x,s,s) = 1$.  If there is no such $x$, it lets $R_{2e+2}$ act next.  If there is such an $x$, 
it sets $x_{2e+1} = x$, redefines $r_{2e+1}$ so that $r_{2e+1} = x_{2e+1}$ 
and ends the stage.  (As above, it leaves 
$\hat{r}_{2e+1}$ unchanged to mark the ``old value'' of $r_{2e+1}$.  If 
$P(e,\hat{r}_{2e+1},s)$ changes values from 1 to 0 while $R_{2e+1}$ is waiting for such 
an $x$, it ends the stage and returns to waiting for $P(e,\hat{r}_{2e+1},s)$ to equal 1.)  

Once $x_{2e+1}$ is defined, $R_{2e+1}$ continues to check whether $P(e,\hat{r}_{2e+1},s) = 1$.  
If this value ever changes to 0, it cancels $x_{2e+1}$, redefines $r_{2e+1}$ so that 
$r_{2e+1} = \hat{r}_{2e+1}$, ends the stage and returns to waiting for $P(e,\hat{r}_{2e+1},s)$ to 
equal 1.  As long as $P(e,\hat{r}_{2e+1},s)$ remains equal to 1, $R_{2e+1}$ checks whether 
$Q(x_{2e+1},s,s)$ continues to equal 1.  As long as it does, $R_{2e+1}$ lets $R_{2e+2}$ act.  
If $Q(x_{2e+1},s,s)$ changes values to 0, then $R_{2e+1}$ cancels $x_{2e+1}$, redefines 
$r_{2e+1}$ to have value $r_{2e+1} = \hat{r}_{2e+1}$, ends the stage and 
and returns to looking for the least $x$ such that $Q(x,s,s) = 1$.  

The analysis that $R_{2e+1}$ eventually succeeds and that it initializes the lower priority 
requirements only finitely often is similar to the analysis given for $R_{2e}$.  The details are left 
to the reader.
\end{proof}

There is no need to restrict ourselves to a single low set $L$ in the proof of Theorem 
\ref{thm:low}.  That is, essentially the same proof (with a little extra bookkeeping in the indices) 
shows that if $L_i$ (for $i \in \mathbb{N}$) is a sequence of uniformly low, uniformly 
$\Delta^0_2$ sets, then there is an infinite binary branching computable tree $T$ such that 
$T$ has no infinite chains and no infinite antichains computable from any of the $L_i$ sets.  
By Jockusch and Soare \cite{joc:72} and Simpson \cite{simp:book}, there is an 
$\omega$-model $\mathcal{M}$ of $\text{WKL}_0$ such that the second order part of 
$\mathcal{M}$ consists of all the sets in the Turing ideal generated by a sequence 
$L_0 \leq_T L_1 \leq_T \cdots$ of uniformly low, uniformly $\Delta^0_2$ sets.  Thus, we obtain 
the following corollary.

\begin{cor}
\label{cor:wkl}
$\text{WKL}_0$ is not strong enough to prove that every infinite binary branching tree has 
either an infinite chain or an infinite antichain.
\end{cor}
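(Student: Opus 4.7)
The plan is to verify the two claims the paper makes just before the corollary and then combine them in an $\omega$-model. The corollary follows once we can exhibit an $\omega$-model $\mathcal{M}\models\text{WKL}_0$ containing an infinite binary branching computable tree $T$ such that no set in $\mathcal{M}$ codes an infinite chain or antichain through $T$.

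First, I would generalize Theorem \ref{thm:low} from a single low set $L$ to a uniformly low, uniformly $\Delta^0_2$ sequence $\{L_i\}_{i\in\N}$. This is almost automatic: split the requirements into
\begin{align*}
R_{\langle e,i,0\rangle}:\ & \varphi_e^{L_i}\text{ is not an infinite chain of }T,\\
R_{\langle e,i,1\rangle}:\ & \varphi_e^{L_i}\text{ is not an infinite antichain of }T,
\end{align*}
and run the same finite-injury priority construction. The only change is that the two $\Delta^0_2$ predicates $P$ and $Q$ now carry an extra parameter $i$; uniform lowness of the sequence guarantees that $P(e,i,k)$ and $Q(e,i,x,u)$ are uniformly $\Delta^0_2$, so the computable approximations $P(e,i,k,s)$ and $Q(e,i,x,u,s)$ used by strategy $R_{\langle e,i,j\rangle}$ still converge in the limit. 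The verification that each $r$-parameter settles and that each requirement is met then goes through exactly as in Theorem \ref{thm:low}.

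Next, I would invoke the Jockusch--Soare low basis theorem together with Simpson's analysis (as cited in the paragraph preceding the corollary) to produce an $\omega$-model $\mathcal{M}$ of $\text{WKL}_0$ whose second-order part is the Turing ideal generated by a chain $L_0\leq_T L_1\leq_T\cdots$ of uniformly low, uniformly $\Delta^0_2$ sets. Every set $X$ in $\mathcal{M}$ is computable from some $L_i$, and hence, by the generalized construction above, $X$ is neither an infinite chain nor an infinite antichain of the computable tree $T$ built from the sequence.

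Finally, I would observe that $T$ itself is computable, hence lies in $\mathcal{M}$, and that $T$ is infinite and binary branching. So inside $\mathcal{M}$ the sentence ``every infinite binary branching tree has an infinite chain or an infinite antichain'' fails, witnessed by $T$. Therefore $\text{WKL}_0$ does not prove this statement. I expect the only non-routine step to be the bookkeeping in the relativized construction, namely checking that the approximations $P(e,i,k,s)$ and $Q(e,i,x,u,s)$ really are uniformly computable in $e,i,k,x,u,s$; this uses the uniformity of the lowness of the $L_i$, which is exactly the strength of the Jockusch--Soare/Simpson construction of the model $\mathcal{M}$.
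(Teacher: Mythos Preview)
Your proposal is correct and follows essentially the same approach as the paper: the paper's proof is just the paragraph preceding the corollary, which generalizes Theorem~\ref{thm:low} to a uniformly low, uniformly $\Delta^0_2$ sequence $\{L_i\}$ via the same extra bookkeeping you describe, and then invokes the Jockusch--Soare/Simpson $\omega$-model of $\text{WKL}_0$ whose second-order part is the Turing ideal generated by such a sequence. Your observation that the key point is the uniform computability of the approximations $P$ and $Q$ (coming from uniform lowness) is exactly the content of the paper's remark about ``a little extra bookkeeping in the indices.''
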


  
\end{document}